\newtheorem{THM}{Theorem}[section]
\newtheorem{COR}[THM]{Corollary}
\newtheorem{EXA}[THM]{Example}
\newtheorem{REM}[THM]{Remark}
\newtheorem{LEM}[THM]{Lemma}
\newtheorem{PRP}[THM]{Proposition}
\newtheorem{DEF}[THM]{Definition}
\newtheorem{CON}[THM]{Conjecture}
\numberwithin{equation}{section}
\newcommand{\blank}[1]{}
\newcommand{\Maaak}[1]{{\color{black}{#1}}}
\newenvironment{proofsect}[1] 
{\vskip0.1cm\noindent{\bf #1.}\hskip0.5cm} 
\newcommand{\halmos}{\rule{1ex}{1.4ex}}
\newcommand{\eqa}{\begin{eqnarray}}
\newcommand{\ena}{\end{eqnarray}}
\newcommand{\eq}{\begin{equation}}
\newcommand{\en}{\end{equation}}
\newcommand{\eqs}{\begin{eqnarray*}}
\newcommand{\ens}{\end{eqnarray*}}
\def\qed{$\halmos$}
\def\emptyset{\varnothing} 
\def\ti{\to\infty} 
\def\n{\nu} 
\def\r{\varrho} 
\newcommand{\Z}     {\mathbb{Z}} 
\newcommand{\N}     {\mathbb{N}} 
\renewcommand{\P}   {\mathbb{P}} 
\newcommand{\E}     {\mathbb{E}}
 \newcommand{\T}     {\mathbb{T}}
 \newcommand{\floor}[1]{\left\lfloor #1 \right\rfloor}
\newcommand{\ssup}[1] {{{\scriptscriptstyle{({#1}})}}} 
\def\comment#1{} 
 \newcommand{\ra}{\rightarrow}
\newcommand{\sss}{\scriptscriptstyle} 
\newcommand{\Ccal}   {{\mathcal C }}
\newcommand{\Gcal}   {{\mathcal G }}
\newcommand{\Tcal}   {{\mathcal T }} 
\newcommand{\mc}[1]{\mathcal{#1}}
\newcommand{\indic}[1]{\1_{\{#1\}}}
\newcommand{\X}{\boldsymbol{X}}
\newcommand{\Y}{\boldsymbol{Y}}
\newcommand{\bl}{\boldsymbol{\lambda}}
\def\ignore#1{}
\def\Def{\ :=\ }
\def\bbP{\mathbb{P}}
\def\bbE{\mathbb{E}}
\def\bP{\mathbf{P}}
\def\bX{\X}
\def\bE{\mathbf{E}}
\def\parent#1{#1^{-1}}
\newcommand{\Gp}{\mc{G}'}
\newcommand{\nn}{\nonumber}
\newcommand{\vep}{\varepsilon}
\newcommand{\del}{\partial}
\newcommand{\1}{\mathbbm{1}}
\newcommand{\Xb}{{\X}^{\sss(\beta)}}
\newcommand{\xb}{{X}^{\sss(\beta)}}
\newcommand{\Xe}{{\X}^{\sss(\beta+\vep)}}
\newcommand{\xe}{{X}^{\sss(\beta+\vep)}}
\newcommand{\op}{\overline{\P}}
\newcommand{\zb}{{\boldsymbol Z}}
\title{
        On the speed of once-reinforced\\ biased random walk on trees
        }
\author{
Andrea Collevecchio\footnote{School of Mathematical Sciences, Monash  University, Melbourne.  E-mail {\tt
Andrea.Collevecchio@monash.edu}}, Mark Holmes \footnote{Department of Statistics, University of Auckland.  E-mail {\tt
m.holmes@auckland.ac.nz}}
\   and 
Daniel Kious\footnote{NYU-ECNU Institute of Mathematical Sciences at NYU Shanghai. E-mail {\tt
daniel.kious@nyu.edu}}
}
\begin{document}
\maketitle

 \begin{abstract}
We study the asymptotic behaviour of once-reinforced biased random walk (ORbRW) on Galton-Watson trees.  Here the underlying \Maaak{(unreinforced)} random walk has a bias towards or away from the root.  We prove that in the setting of multiplicative once-reinforcement the ORbRW can be recurrent even when the underlying  {biased random} walk is ballistic.  We also prove that, \Maaak{on Galton-Watson trees without leaves, the speed is positive in the transient regime}. {Finally, we prove that, on regular trees, the speed of the ORbRW is monotone decreasing in the reinforcement parameter when the underlying random walk has  high speed, and the reinforcement parameter is small.}
    \end{abstract}


\noindent {\bf Keywords:} {Random walk, Once-reinforced random walk, Galton-Watson tree, reinforcement.}

\noindent {\bf MSC2010:} {60K35}

\setcounter{footnote}{0}
\def\thefootnote{\arabic{footnote}}


\section{Introduction}
\label{sec-intro}
Reinforced random walks have been studied extensively since the introduction of the (linearly)-reinforced random walk of Coppersmith and Diaconis \cite{CD97}.  In this paper we study once-reinforced random walk, which was introduced by Davis \cite{Dav90} as a possible simpler model of reinforcement to understand.  While there have been recent major advances in the understanding of linearly reinforced walks on $\Z^d$ (see e.g.~\cite{ACK14,ST15,DST15,SZ15} and the references therein), rather less is known about once-reinforced walks on $\Z^d$.

In this paper we consider a class of random walks (on trees) characterised by two parameters $u_0,u_1\in (0,\infty)$.  The relative probability  of stepping to a given previously unvisited child compared to the probability of stepping to one's parent is $u_0/(1+u_0)$, while the relative probability of stepping to a given previously visited child as compared to the parent is $u_1/(1+u_1)$.   {Biased random walk corresponds to the case $u_0=u_1\ne 1$, while unbiased random walk corresponds to the case $u_0=u_1=1$.}

This class of walks includes once-reinforcement of biased random walks on Galton-Watson trees $\mc{G}$.  The underlying walk (when the reinforcement parameter is set to 0) can be described as follows:  when the walker is at $x\in \mc{G}$, each of the $\del(x)$ children of $x$ receive weight $\alpha$, while the parent of $x$ receives weight 1.  The walker chooses to step to a neighbouring vertex (i.e.~one of its children, or its parent if it has one) with probability proportional to the weights.  This walker has a drift (or bias) away from the origin on this step provided that $\alpha \del(x)>1$.  The once-reinforced biased random walk on $\mc{G}$ is a perturbation of this walk where vertices, or equivalently edges, that have been visited before have {higher weight}.  

{
Our main result, Theorem \ref{thm:speed}, gives a sharp criterion for the recurrence or transience (in fact, ballisticity) of these walks on Galton-Watson trees.
As a corollary we find examples of ballistic random walks that become recurrent after perturbation by a certain kind of once-reinforcement (see Corollary \ref{cor:Mult}).  

We also state a result of monotonicity for the speed of the once-reinforced random walk on regular trees when the bias is large and the reinforcement \Maaak{is }small enough, see Theorem \ref{thm:mono}.

The proof of Theorem \ref{thm:speed} uses two main ingredients which are, first, the adaptation of an idea of \cite{Co06}, and, second, the presence of a regular backbone in the Galton-Watson tree. The argument in \cite{Co06} relies on the embedding of a relevant Galton-Watson subtree. In order to properly use it here, we needed to introduce a family of coupled processes, called {\it extensions}, which in principle should allow one to use this strategy for a large class of processes.

The proof of Theorem \ref{thm:mono} is inspired by work of Ben Arous, Fribergh and Sidoravicius \cite{BAFS}, who proved a partial monotonicity result for biased random walks on Galton-Watson trees via a nice and natural coupling.  Here we needed to adapt this idea to find a coupling which works for certain {\em self-interacting} walks.  We have taken some care to extract the general features of the argument (which are relatively simple) and the details of the coupling for our particular setting (which are highly non-trivial).  This general method provides a coupling alternative to expansion techniques (e.g.~\cite{HH12}) when the self-interacting random walk has a large bias independent of the history.}

\subsection{The model}
\label{sec:themodel}
Consider a  rooted tree $\Gcal = (V, E)$ (where $V$ is the set of vertices and $E$ is the set of edges), augmented by adding  a parent $\parent{\r}$ to the root $\r$, the two being connected by an edge. If two vertices $\nu$  and $\mu$ are 
the endpoints of the same edge, they are said to be neighbours, and this property is 
denoted by $\nu \sim \mu$. 
The distance $\mathrm{d}(\nu,\mu)$ between any pair of  vertices $\nu, \mu$, not necessarily adjacent, 
is the number of edges in the unique self-avoiding path connecting $\nu$ to~$\mu$. We set $|\parent{\r}| = -1$. 
For any  other vertex $\nu$,  we let~$|\nu|$ 
be the distance from~$\nu$ to the root~$\r$. 
We use~$\parent{\nu}$ to denote the parent of~$\nu$, and $(\nu_i)_{i \in [\del(\nu)]}$, its children, where $\del(\nu)$ is the number of offspring of $\nu$ and  $[n]\Def \{1, 2, \ldots, n\}$.  Write $C_{\nu}=\{\nu_1,\dots, \nu_{\del(\nu)}\}$.  For $\mu\in V\setminus\{\r^{-1}\}$ we write $\nu < \mu$, 
if~$\nu$ is an ancestor of~$\mu$, i.e.~$\nu$ lies on the self-avoiding path connecting $\mu$ to $\r^{-1}$, and we write $\nu\le \mu$ if $\nu<\mu$ or $\nu=\mu$. If $\nu <\mu$ then $\mu$ is said to be a descendant of $\nu$.

{We are now about to define MAD walks, standing for {\it maximum acts differently}, in reference to their behavior on $\mathbb{Z}$}. Fix $\mc{G}=(V,E)$ and parameters $u_1,u_0>0$ and define \Maaak{the law $\bP^{\mc{G}}$ of } a MAD walk $\X = (X_n)_{n\in \Z_+}$, taking values on the vertices of $\Gcal$ (and taking nearest neighbour steps) as follows.  We set $X_0 = \r^{-1}$.  
Given $\mc{F}_n=\sigma(X_k:k\le n)$, we let $\mc{E}_\varnothing(n)=\{[X_{k-1},X_k]:k\le n\}\subset E$ denote the set of (undirected) edges crossed by time $n$.

For $\nu\ne \r^{-1}$ define
\begin{align}
W_n(\nu,\nu^{-1})&=1,
\end{align}
and for $i\in [\del(\nu)]$ define
\begin{align}
W_n(\nu,\nu_i)&=u_1\indic{[\nu,\nu_i]\in \mc{E}_\varnothing(n)}+u_0\indic{[\nu,\nu_i]\notin \mc{E}_\varnothing(n)}.
\end{align}


Note that when $X_n=\r^{-1}$, there is one child ($\del(\r^{-1})=1$) and no parent. So we set $\bP^{\mc{G}}(X_{n+1}=\r|\mc{F}_n)\indic{X_n=\r^{-1}}=\indic{X_n=\r^{-1}}$ (the walk always steps to $\r$ from $\r^{-1}$), a.s.   For $x\ne \r^{-1}$, the probability of stepping from $x$ to a given neighbour depends on both the number of previously visited children of $x$  and the total number of children $\del(x)$ of $x$. To be precise,
\begin{align} \label{transprob}
\bP^{\mc{G}}(X_{n+1}=\mu|\mc{F}_n)&:=\frac{W_{ n}(X_n,\mu)}{\sum_{\nu\sim X_n}W_{n}(X_n,\nu)}.
\end{align}

\blank{
Similarly, if $x$ has no child in $\Gcal$ ($\del(x)=0$) then one must always step to $x^{-1}$ from $x$, so $\bP^{\mc{G}}(X_{n+1}=X_n^{-1}|\mc{F}_n)\indic{\del(X_n)=0}=\indic{\del(X_n)=0}$.  Otherwise we define the transition probabilities of the process $\X$ 
via the relative probabilities
\begin{align}
&\frac{\bP^{\mc{G}}(X_{n+1}=X_{n,i}|\mc{F}_n)\indic{X_n \ne \r^{-1},\del(X_n)>0}}{\bP^{\mc{G}}(X_{n+1}=X_{n,i}|\mc{F}_n)+\P(X_{n+1}=X_{n,i}|\mc{F}_n)}\\
&\qquad=\big(b\indic{X_{n,i}\notin \mc{E}_\varnothing(n)}+a\indic{X_{n,i}\in \mc{E}_\varnothing(n)} \big)\indic{X_n \ne \r^{-1},\del(X_n)>0}.
\label{relative}
\end{align}
From this it is easy to see that all previously unvisited children are equally likely to be stepped to, and all previously visited children are equally likely to be stepped to (but that these two probabilities agree only if $a=b$).  
\[\bP^{\mc{G}}(X_{n+1}=X_n^{-1}|\mc{F}_n)=\left(K_\varnothing(n)\frac{a}{1-a}+(\del(X_n)-K_\varnothing(n))\frac{b}{1-b}+1\right)^{-1}.\]
The above is equivalent to a 2-parameter family of models where (directed) edges are weighted and walkers step with probabilities proportional to the weights of the edges.  Let us refer to the edge to one's parent as the backward edge and edges to children as forward edges.  Then we can set the weight of the backward edge as 1, the weight of a previously used forward edge as $a'$ , and the weight of a previously unused forward edge as $b'$.  Then $a=a'/(1+a')$ and $b=b'/(1+b')$, or in other words, $a'=a/(1-a)$ and $b'=b/(1-b)$.
}

In the following three examples we define walks whose step probabilities are proportional to weights on edges incident to the current location.  
\begin{EXA}
\label{exa:unreinforced}
Let $\alpha>0$ and suppose that forward edges each have weight $u_1=u_0=\alpha$.  This is a biased (when $\alpha \ne 1$) random walk on the tree.
\end{EXA}

\begin{EXA}[Multiplicative once-reinforcement]
\label{exa:mult}
Let $\alpha,\beta>0$, and suppose that the backward edge has weight $(1+\beta)$ and forward edges that have not been previously visited have weight $\alpha$, while forward edges that have previously been visited have weight $\alpha(1+\beta)$.  After rescaling all weights so that the backward edge has weight 1, this is equivalent to $u_1=\alpha$ and $u_0=\alpha/(1+\beta)$.
\end{EXA}

\begin{EXA}[Additive once-reinforcement]
\label{exa:add}
Let $\alpha, \beta>0$, and suppose that the backward edge has weight $(1+\beta)$ and forward edges that have not been previously visited have weight  $\alpha$, while forward edges that have previously been visited have weight  $(\alpha+\beta)$.  After rescaling we get $u_1=(\alpha+\beta)/(1+\beta)$ and $u_0=\alpha/(1+\beta)$.
\end{EXA}

Multiplicative and additive once-reinforcement coincide only when there is no intrinsic bias (i.e.~when $\alpha=1$).  Both are in some sense natural notions of once-reinforcement when the underlying walk has a drift.  Multiplicative reinforcement has the property that if all neighbours of $\mu\ne \r^{-1}$ have been visited by the walk, then the step probability from $\mu$ is that of an unreinforced random walk (i.e.~the reinforcement vanishes at such locations).  Additive reinforcement is the situation that gives the natural criterion for transience versus recurrence on regular trees (based on the sign of $\alpha d-1$, where $d$ is the number of children of each vertex).


Thus far we have described the dynamics of the random walk on a fixed tree $\mc{G}$.  We can also consider the behaviour of the model averaged over a family of {\it random} trees.  Let $P$ denote the law of a Galton-Watson tree $\mc{G}$, and let $d\in (0,\infty)$ denote the mean number of offspring of each individual.  We use the notation $\P()$ to denote the averaged law of a MAD walk on a random tree $\mc{G}$ with law $P$, defined by 
\begin{align}
\label{annealed}
\P(\mc{G}\in A, \X\in B):=\int_A \bP^{\mc{G}}(\X\in B) dP.
\end{align}


Our first main result is the following theorem which reveals a non-trivial (in the case of multiplicative reinforcement) phase transition for recurrence/transience.  Sidoravicius has conjectured that there is a phase transition for transience for once-reinforced simple symmetric random walk on $\Z^d$, $d\ge 3$.  In \cite{KS16}, a phase transition is shown for the (non-biased) once-reinforced random walk on $\Z^d$-like trees, which are inhomogeneous trees with polynomial growth. 
It should be noted that, in both \cite{KS16} and the conjecture of Sidoravicius, although the underlying walk is transient, it always has zero-speed. Here, we provide examples where the underlying walk has positive speed, but its once-reinforced counterpart is recurrent.

In order to state the result, let $G_\infty$ denote the event that $\mc{G}$ survives forever.  Clearly on $(G_\infty)^c$ the walker is recurrent under the so-called {\it quenched} measure $\bP^{\mc{G}}$.  Therefore our main results are stated for the supercritical setting $d>1$, whence $\bbP(G_\infty)>0$.  For positivity of the speed we assume that every individual has at least one child, i.e.
\begin{align}
P(\del(\r)\ge 1)=1.\label{no_leaves}
\end{align}
This is equivalent to saying that we restrict ourselves to the subclass of (rooted) trees with no leaves (apart from $\r^{-1}$).  \\

Here, we say that $\X$ is transient if it comes back to its starting point finitely often almost surely, and we say that $\X$ is recurrent if it comes back to its starting point infinitely often almost surely. Note that  (see Section \ref{sec:green}) these notions of recurrence and transience are unambiguous and match any of the classical definitions.  In particular, a walk is either transient or recurrent.  


\begin{THM}
\label{thm:speed}
Fix $u_1,u_0>0$,   and $d>1$.  {Let $\mc{G}$ be a Galton-Watson tree with mean offspring $d$ and let $G_\infty$ be the event that it survives forever.} Then on the event $G_\infty$, the following occur $\P$-almost surely:
\begin{itemize}
\item[$(1)$] If $du_0>1-u_1+u_0$ then $\X$ is transient;
\item[$(2)$] If $du_0<1-u_1+u_0$ then $\X$ is recurrent.
\end{itemize}
Moreover if \eqref{no_leaves} holds and $u_1\ge u_0$ then the speed is (deterministic and) positive in $(1)$.
\end{THM}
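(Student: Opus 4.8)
The plan is to use the regeneration (cut-time) method. This is particularly clean here because once-reinforcement is \emph{local}: the first time the walk reaches a new record level, the whole subtree below the current vertex is unvisited, so from such a vertex the walk restarts as a fresh MAD walk on a fresh Galton--Watson tree. Throughout I assume \eqref{no_leaves} (so $\mc{G}$ is a.s.\ infinite and $G_\infty$ has full $\P$-measure) and $u_1\ge u_0$, and I work under $\P$, using that part $(1)$ already gives $|X_n|\to\infty$ a.s. Call $n\ge1$ a \emph{regeneration time} if $|X_m|<|X_n|$ for all $m<n$ --- so $X_n$ sits at a record level, and in particular $X_n$ and its subtree $\mc{T}_{X_n}$ (whose vertices all lie at levels $\ge|X_n|$) have not been visited by time $n$ --- and $|X_m|\ge|X_n|$ for all $m\ge n$. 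On the regeneration event the future of $\X$ is, conditionally, a fresh MAD walk started at the root $X_n$ of the fresh Galton--Watson tree $\mc{T}_{X_n}$, conditioned never to step to $X_n^{-1}$, and independent of the past.

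First I would show that regeneration times $0<\tau_1<\tau_2<\cdots$ occur infinitely often $\P$-a.s. The input is that, after averaging over the fresh subtree, the probability that the walk escapes a fresh record vertex without ever going below it is some constant $\bar q>0$; this is positive because, with positive probability, the fresh subtree is bushy (using $d>1$ and \eqref{no_leaves}) and the walk --- being transient there by part $(1)$ --- escapes into one branch without backtracking. A conditional Borel--Cantelli argument over the record vertices (of which there are infinitely many by transience) yields infinitely many regeneration times, and the usual conditioning on the regeneration events makes the increments $\big(\tau_{k+1}-\tau_k,\ |X_{\tau_{k+1}}|-|X_{\tau_k}|\big)_{k\ge1}$ i.i.d.\ and independent of the initial segment $[0,\tau_1]$.

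Since $1\le|X_{\tau_{k+1}}|-|X_{\tau_k}|\le\tau_{k+1}-\tau_k$ pathwise, as soon as $m:=\E[\tau_2-\tau_1]<\infty$ the strong law gives $\tau_k/k\to m$ and $|X_{\tau_k}|/k\to\E\big[|X_{\tau_2}|-|X_{\tau_1}|\big]\in[1,m]$, and an interpolation between consecutive $\tau_k$'s gives $|X_n|/n\to v:=\E\big[|X_{\tau_2}|-|X_{\tau_1}|\big]/m\in(0,1]$ $\P$-a.s.; the constant $v$ does not depend on the realisation of $\mc{G}$ (the tree-randomness is already absorbed into the law of the regeneration increments), so the speed is deterministic and strictly positive. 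Everything therefore reduces to the single estimate $\E[\tau_2-\tau_1]<\infty$.

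This is the step I expect to be the main obstacle. I would bound $\E[\tau_2-\tau_1]\le C\,\E\big[|X_{\tau_2}|-|X_{\tau_1}|\big]$, where $C$ is a uniform upper bound on the expected number of steps needed to raise the running maximum level by one, and then bound the remaining factor using the renewal structure once more: the number of record levels crossed before the next regeneration is stochastically dominated by a geometric random variable of parameter $\bar q$, so $\E\big[|X_{\tau_2}|-|X_{\tau_1}|\big]\le1/\bar q$, and hence $\E[\tau_2-\tau_1]\le C/\bar q<\infty$. Producing the uniform constant $C$ --- equivalently, ruling out that the walk dawdles arbitrarily long below its current record --- is the genuinely hard part, and is exactly where both hypotheses are indispensable. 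By \eqref{no_leaves} every vertex has a child, hence a way upward of weight at least $u_0$, so the walk can never be trapped in a finite branch while below the record; and $u_1\ge u_0$ guarantees that crossing an edge can only increase the walk's outward push, so a monotone coupling lets one dominate the excursions the walk makes below the running maximum by those of a genuinely ballistic walk on a no-leaves Galton--Watson tree (equivalently, on an embedded regular backbone, in the spirit of the embedding of \cite{Co06}), for which the expected inter-level crossing times are uniformly bounded. That both conditions are needed is consistent with known behaviour of biased random walk on Galton--Watson trees, which already exhibits a transient but zero-speed phase as soon as leaves are allowed.
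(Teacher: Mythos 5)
Your proposal addresses only the \emph{Moreover} clause: you explicitly take parts $(1)$ and $(2)$ as given (``using that part $(1)$ already gives $|X_n|\to\infty$''). But the transience/recurrence dichotomy is the main content of the theorem and cannot be assumed. The paper's proof of $(1)$ requires a genuinely non-trivial construction: one builds a family of coupled \emph{extension} walks $\X^{\ssup{\Gp}}$ on subtrees via Rubin's construction, uses them to define a ``green'' branching process $\Tcal^*$ on levels $n^*\Z_+$ whose offspring mean is $d^{n^*}\psi_{n^*}$ with $\psi_n$ the probability from Corollary~\ref{cor:path}, and then derives a contradiction from the supercriticality of $\Tcal^*$ together with Proposition~\ref{propsansnom}. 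The proof of $(2)$ is a direct expectation computation showing $\E[|R_{\r^{-1}}|]\le\sum_n\psi_n d^n<\infty$. None of this appears in your write-up, and it is not recoverable from the speed argument.

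For the speed itself, your renewal-theoretic skeleton is broadly consistent with Section~\ref{sec;posspeedtrans} of the paper (regeneration levels/times, $\E[\ell_2-\ell_1]=1/\P(T(\parent{\r})=\infty)$, finiteness of $\E[\tau_2-\tau_1]$ giving $v>0$). However, the genuinely hard part — the uniform bound you call $C$ — is precisely where you stop and hand-wave. Your proposed factorisation $\E[\tau_2-\tau_1]\le C\,\E[|X_{\tau_2}|-|X_{\tau_1}|]$ conflates the number of new record levels crossed with the total time spent, but the walk can revisit vertices below its record arbitrarily many times; what must be controlled is the \emph{local time} at each vertex. The paper does this by writing $\E[T_n]\le\sum_j\sum_{|\nu|=j}\E[L'(\nu,\infty)]\,\P(T(\nu)<\infty)$ and proving (Lemma~\ref{loct}) a uniform bound on $\E[L'(\nu,\infty)]$, which in turn rests on the existence of an embedded $(K,{\bf d})$-regular backbone via Pemantle's Lemma~\ref{lemfrompem}, the supercriticality of the green process on that backbone (Lemma~\ref{lemspb}, via Azuma--Hoeffding), and the monotone comparison in $\omega$ (Lemma~\ref{lem:pathmono}, needing $u_1\ge u_0$). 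Moreover, the paper's argument \emph{splits into two cases}: $u_1\ge1$ (Section~\ref{speedoutward}), where the above machinery applies, and $u_1<1$ (Section~\ref{speedroot}), where one instead works under the conditioned measure $\overline{\P}$ and exploits the uniform negative drift $1-u_1>0$ of the walk on one-dimensional segments. Your sketch neither makes this case distinction nor supplies the local-time bound, so the crucial step is missing. You have correctly identified \emph{where} the difficulty lies, but you have not resolved it.
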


{
\begin{REM}
In Theorem \ref{thm:speed}, we only consider the case $u_1\ge u_0$ for the positivity of the speed. This corresponds to the case where the interaction is attractive, and this fact is used, e.g., in Lemma \ref{loct}. We believe that similar results can be obtained if $u_1<u_0$ but the technique needs to be adapted.
\end{REM}
}

\begin{REM}
\Maaak{We believe that on Galton-Watson trees, when $du_0=1-u_1+u_0$ the MAD walk is recurrent. This should not be true in general (i.e.~if we do not assume that the tree is a Galton-Watson tree).} 
\end{REM}
\begin{REM}
\Maaak{The criterion in Theorem \ref{thm:speed} can be written as $(d-1)u_0+u_1\lessgtr 1$.
Thus, one can view the criterion as saying that the walker is recurrent (resp.~transient) if it has a bias towards (resp.~away from) the root {\em when at a site where exactly one child has been visited.}}
\end{REM}

\begin{REM}
\Maaak{
One may ask if the walk is {\em positive recurrent} when $du_0<1-u_1+u_0$.  We believe that in this setting the expected return time for the $k$-th return to the root is finite for each $k$, but that if $1<du_1$ then these expected times diverge  with $k$.}
\end{REM}

Putting in the values of $u_1$ and $u_0$ from Examples \ref{exa:mult} and \ref{exa:add} we obtain the following:

\begin{COR}[Multiplicative once-reinforcement]
\label{cor:Mult}
Fix $\alpha>0$ and $\beta>-1$ and $d>1$ in Example \ref{exa:mult}.  Then on the event $G_\infty$, the following occur $\P$-almost surely:
\begin{enumerate}
\item [$(1)$]
If $d\alpha >1+\beta - \alpha\beta$ then $\X$ is transient;
\item [$(2)$]
If $d\alpha < 1+\beta - \alpha\beta$ then $\X$ is recurrent.
\end{enumerate}
Moreover if \eqref{no_leaves} holds then the speed is (deterministic and) positive in $(1)$.
\end{COR}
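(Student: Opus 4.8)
The plan is to build everything around one object, the \emph{escape set} $\mc S$, defined as the set of vertices $\nu$ such that $\X$ visits $\nu$ at some time after which it never visits $\nu^{-1}$ again. Since $\X$ is a nearest-neighbour walk on a tree, one checks directly from the definitions that $\X$ is transient if and only if $\r\in\mc S$, that $\r\in\mc S$ forces at least one child of $\r$ to lie in $\mc S$, and (iterating) that $\mc S$ then contains an infinite ray; so by the dichotomy recalled in Section~\ref{sec:green} it is enough, for both parts~(1) and~(2), to decide whether $\r\in\mc S$. The whole point of the argument is to quantify this through a branching estimate: fix a vertex $\nu$ at the moment $\X$ first reaches it (so every edge of the subtree $T_\nu$ rooted at $\nu$ is still unvisited) and follow $\X$ until it either steps to $\nu^{-1}$ or gets confined forever to $T_\nu$; the quantity that matters is the number of children $\nu_i$ of $\nu$ that end up in $\mc S$.

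For transience (part~(1)) I would show that, on $G_\infty$, $\mc S$ dominates (as a tree) a supercritical Galton--Watson tree. The delicate bookkeeping is that the first time $\X$ descends to a given child of $\nu$ it crosses an edge of weight $u_0$, whereas on every later excursion that edge has weight $u_1$; tracking the reinforcement state of the child-edges of $\nu$ at the instant $\X$ ``commits'' to a descent, one finds that the expected number of children of $\nu$ through which $\X$ genuinely attempts and completes a descent is governed by the comparison of $(\del(\nu)-1)u_0+u_1$ with $1$ --- exactly the rewritten criterion. Averaging over the offspring law, the dominating branching process has mean offspring $>1$ precisely when $du_0>1-u_1+u_0$, so $\X$ is transient with positive quenched probability; a zero--one argument exploiting the ``regular backbone'' of the Galton--Watson tree --- along which $\X$ meets infinitely many, asymptotically independent, opportunities to escape --- then promotes this to quenched probability one, hence to $\P$-a.s.\ on $G_\infty$. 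The genuine obstacle, and the reason for introducing the \emph{extensions}, is that the walk is self-interacting: its excursions into distinct subtrees $T_{\nu_i}$ are not independent, being coupled through the visited-edge configuration at the common parent $\nu$, so a Galton--Watson comparison cannot be invoked directly. Following the embedding idea of \cite{Co06}, one instead builds a family of coupled MAD walks (the extensions) in which these subtree excursions are replaced by independent fresh copies dominating, from above or below as required, the true escape events, and one checks the coupling is compatible with the branching recursion.

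Part~(2) is then essentially free: when $du_0<1-u_1+u_0$ the same comparison bounds $\mc S$ above (as a tree) by a \emph{subcritical} Galton--Watson tree, which is $\P$-a.s.\ finite; since $\r\in\mc S$ would force $\mc S$ to be infinite, we get $\r\notin\mc S$ a.s.\ on $G_\infty$, so $\X$ is not transient, hence recurrent by the dichotomy of Section~\ref{sec:green}.

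For the positive-speed statement, assuming \eqref{no_leaves} and $u_1\ge u_0$, I would use regeneration times --- times $n$ at which $\X$ is at a vertex $\nu$ with $|X_m|<|\nu|$ for $m<n$ and $|X_m|>|\nu|$ for $m>n$. At such a time $T_\nu$ has never been visited and is never left again, and only one child-edge of $\nu$ is ever used, so the future of $\X$ is independent of its past; if $\tau_1<\tau_2<\cdots$ enumerate these times, the pairs $\bigl(|X_{\tau_{k+1}}|-|X_{\tau_k}|,\,\tau_{k+1}-\tau_k\bigr)$ are i.i.d.\ for $k\ge1$, and the law of large numbers gives $|X_n|/n\to\E[\,|X_{\tau_2}|-|X_{\tau_1}|\,]\big/\E[\tau_2-\tau_1]$, a deterministic constant. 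It then remains to check that regenerations occur infinitely often --- which follows from transience, from the positive escape probability above, and from \eqref{no_leaves} (every $T_\nu$ is infinite, so there is no trapping in finite pockets) --- and, crucially, that $\E[\tau_2-\tau_1]<\infty$ (and $\E[\,|X_{\tau_2}|-|X_{\tau_1}|\,]<\infty$), which makes the limiting ratio strictly positive and finite. I expect this last tail estimate to be the hard part: the hypothesis $u_1\ge u_0$ (attractive reinforcement) is what prevents $\X$, once it has started down a reinforced ray, from spending too long backtracking --- this is also where attractiveness enters in Lemma~\ref{loct} --- and controlling this backtracking, rather than producing regenerations, is the main obstacle for the speed.
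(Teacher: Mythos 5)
The corollary is deduced in the paper in a single line: substituting $u_1=\alpha$ and $u_0=\alpha/(1+\beta)$ from Example \ref{exa:mult} into the criterion $du_0\lessgtr 1-u_1+u_0$ of Theorem \ref{thm:speed} and multiplying through by $1+\beta$ gives $d\alpha\lessgtr 1+\beta-\alpha\beta$. Your proposal never invokes the theorem and instead re-derives it from scratch, so I will read it as an attempted proof of Theorem \ref{thm:speed}.

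The key gap is that the escape set $\mc S$ is the wrong object to which to attach a branching estimate. If $\nu\in\mc S$ then, after its last visit to $\nu$, the nearest-neighbour walk $\X$ lives in $\bigcup_i\mc{G}'_{\nu_i}$ and can never pass between distinct subtrees $\mc{G}'_{\nu_i}$ without revisiting $\nu$; hence $\X$ is eventually trapped in exactly one $\mc{G}'_{\nu_i}$, and that $\nu_i$ is the unique child of $\nu$ in $\mc S$. Thus $\mc S$, when non-empty, is a single infinite ray: it never dominates a supercritical Galton--Watson tree (part $(1)$), nor does it admit a useful subcritical upper bound (part $(2)$). The paper circumvents this by working with the green tree $\mc T^*$ of Section \ref{sec:green}, built from escape events of the \emph{path-extensions} $\X[\nu^{-1},\mu]$ rather than of $\X$ itself: a green $\nu$ may have several green descendants $\mu$ at distance $n^*$, including ones the true walk never sees, because the extension on $[\nu^{-1},\mu]$ continues to run on fresh exponential clocks after $\X$ has left that path for good. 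Offspring counts of distinct green vertices are determined by disjoint clock families, so $\mc T^*$ is a bona fide Galton--Watson tree with mean $d^{n^*}\psi_{n^*}$, and its survival forces $T(\r^{-1})=\infty$ on the coupled space. You do gesture at the extension machinery, but the specific claim that $\mc S$ itself has the requisite branching structure fails. For part $(2)$ the paper needs no tree at all: it bounds $\E[|R_{\r^{-1}}|]\le\sum_n d^n\psi_n<\infty$ directly. For the speed, the paper does not prove $\E[\tau_2-\tau_1]<\infty$ head-on; it proves $\E[T_n]\le Cn$ via vertex-local-time estimates and a backbone subtree (splitting into the cases $u_1\ge1$ and $u_1<1$, a split your sketch does not anticipate) and concludes via Fatou's lemma. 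And, as you correctly flag, $u_1\ge u_0$ is needed for the speed, which under Example \ref{exa:mult} means $\beta\ge0$; the ``moreover'' clause of the corollary is therefore only justified on that sub-range.
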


\begin{COR}[Additive once-reinforcement]
\label{cor:add}
Fix $\alpha>0$ and $\beta>-\min(\alpha,1)$ and $d>1$ in Example \ref{exa:add}.  Then on the event $G_\infty$, the following occur $\P$-almost surely:
\begin{enumerate}
\item[$(1)$]
If $d\alpha>1$ then $\X$ is transient;
\item [$(2)$]
If $d\alpha<1$ then $\X$ is recurrent.
\end{enumerate}
Moreover if \eqref{no_leaves} holds then the speed is (deterministic and) positive in $(1)$.
\end{COR}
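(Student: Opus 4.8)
The plan is to deduce the corollary directly from Theorem~\ref{thm:speed}, just as for Corollary~\ref{cor:Mult}. From Example~\ref{exa:add}, the walk $\X$ here is exactly the MAD walk with $u_1=(\alpha+\beta)/(1+\beta)$ and $u_0=\alpha/(1+\beta)$, so the first step is to confirm that Theorem~\ref{thm:speed} is applicable with these parameters: the assumption $\beta>-\min(\alpha,1)$ guarantees $1+\beta>0$ and $\alpha+\beta>0$, hence $u_1,u_0\in(0,\infty)$, and $d>1$ is assumed; thus all conclusions of Theorem~\ref{thm:speed} hold on $G_\infty$, $\P$-a.s., for these values of $u_1,u_0,d$.

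The second step is the single required computation: rewriting the threshold of Theorem~\ref{thm:speed} in terms of $\alpha,\beta,d$. The dichotomy is governed by the sign of $du_0-(1-u_1+u_0)=(d-1)u_0+u_1-1$, and substituting the values above gives
\[
(d-1)u_0+u_1-1=\frac{(d-1)\alpha+(\alpha+\beta)-(1+\beta)}{1+\beta}=\frac{d\alpha-1}{1+\beta}.
\]
Since $1+\beta>0$ this has the same sign as $d\alpha-1$. Hence $d\alpha>1$ is equivalent to $du_0>1-u_1+u_0$, which gives transience by part~(1) of Theorem~\ref{thm:speed}, and $d\alpha<1$ is equivalent to $du_0<1-u_1+u_0$, which gives recurrence by part~(2). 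This establishes parts (1) and (2).

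For the last assertion, the plan is to invoke the final sentence of Theorem~\ref{thm:speed}: if \eqref{no_leaves} holds and $u_1\ge u_0$, then the speed in regime~(1) is deterministic and positive. In the additive model $u_1-u_0=\beta/(1+\beta)$, which has the sign of $\beta$, so $u_1\ge u_0$ holds precisely when $\beta\ge0$ --- the attractive regime of Example~\ref{exa:add}; for such $\beta$ the positive-speed conclusion is then immediate. The only point needing care --- and essentially the only obstacle, modest as it is --- is this translation: the positive-speed input requires $u_1\ge u_0$, whereas for $\beta\in(-\min(\alpha,1),0)$ the reinforcement is repulsive ($u_1<u_0$) and lies outside the scope of the positive-speed part of Theorem~\ref{thm:speed} (cf.\ the remark following it); so in that range the positive-speed statement is to be read with $\beta\ge0$. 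All the rest is the one-line identity displayed above.
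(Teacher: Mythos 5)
Your proof is correct and follows the paper's approach: substitute $u_1=(\alpha+\beta)/(1+\beta)$ and $u_0=\alpha/(1+\beta)$ from Example~\ref{exa:add} into Theorem~\ref{thm:speed} and observe that $(d-1)u_0+u_1-1=(d\alpha-1)/(1+\beta)$ has the same sign as $d\alpha-1$ since $1+\beta>0$. You also rightly flag that the positive-speed conclusion requires $u_1\ge u_0$, i.e.\ $\beta\ge0$, a hypothesis that the corollary's statement (allowing $\beta>-\min(\alpha,1)$) leaves implicit.
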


One might be surprised to see the phase transition at \Maaak{$d\alpha >1+\beta - \alpha\beta$} rather than at \Maaak{$d\alpha=1$} in the multiplicative reinforcement regime, but that surprise dissipates somewhat after considering the following heuristic, in which we assume that $\beta>0$ and $\mc{G}$ is a regular tree:

\Maaak{If $d\alpha<1$ then $\X$ will clearly be recurrent as it will always have a drift to the root regardless of the local environment (even if all $d$ children of the current location are reinforced, their  total weight $d\alpha(1+\beta)$ is still less than the weight of the parent $(1+\beta)$).  However it is still plausible that $\X$ is recurrent if $d\alpha$ is slightly larger than 1, since the environment seen by the walker won't always have all children reinforced.  


Similarly, if $1+\beta <d\alpha$ then $\X$ will clearly be transient as we have a drift away from the root regardless of the local environment (even when none of the children of the current location are reinforced).  It is still plausible that $\X$ is transient if $d\alpha$ is slightly smaller than $1+\beta$, since the environment seen by the walker won't always have no children reinforced.  }

Note that when $\alpha<1$,
\[1<1+\beta -\alpha\beta <1+\beta,\]
so Corollary \ref{cor:Mult} affirms that \Maaak{when $d\alpha$ is a bit larger than 1, there is enough push towards the root for the walker when at frontier sites (sites where at least one child has not been visited before) to make it recurrent.  It also affirms that when $d\alpha$ is a bit smaller than $1+\beta$ there is enough push away from the root for the walker when at sites when some child has been visited to make it transient.} 

{For additive reinforcement, perhaps surprisingly, the reinforcement parameter $\beta$ plays no role in the recurrence or transience of the process.} Nevertheless, a similar argument applies, with the relevant pair of bounds being $d(\alpha+\beta)<1+\beta$ implies drift toward the root and $d\alpha>1+\beta$ drift away.  Now when $d>1$ and $\beta>0$,
\[1-(d-1)\beta  <1<1+\beta.\]
Thus, Corollary \ref{cor:add} of the theorem affirms that visits to \Maaak{locations where at least one but not all children of the current location have been reinforced generate enough push towards the root to retain recurrence when $d\alpha$ is a bit bigger than $1-(d-1)\beta$, and enough push away from the root to retain transience when $d\alpha$ is a bit smaller than $1+\beta$.}

Note that our results state that the speed is positive in the transient regime on trees without leaves.  In general this is not expected to hold for trees with leaves. Indeed, in \cite{LPP}, it is proved that biased random walks on Galton-Watson trees with leaves are transient but with a vanishing speed as soon as the bias is large enough (and in particular the speed is not monotone). For a study of the speed of random walks defined on Galton-Watson trees, see e.g. \cite{Aid1}.

For fixed $d$, as long as the tree has no leaves (e.g.~a $d$-regular tree) then it is natural to expect that the speed of the walker is increasing in $u_0$ and $u_1$.
\begin{CON}
\label{con:mono}
On $d$-regular trees the speed given in Theorem \ref{thm:speed} is monotone increasing in both $u_1$ and $u_0$.  It is strictly monotone in the transient regime.
\end{CON}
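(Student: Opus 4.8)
The plan is to attempt the natural coupling argument — following and extending the strategy of Ben Arous, Fribergh and Sidoravicius \cite{BAFS} for biased random walk, which is exactly the idea behind the partial result Theorem \ref{thm:mono}. First reduce the statement to a comparison of regeneration cycles. In the transient regime the MAD walk on a $d$-regular tree admits regeneration times $0=\tau_0<\tau_1<\tau_2<\cdots$ (times at which $|X_n|$ reaches a strict running maximum that is never revisited from above afterwards); below a regeneration point the subtree and the reinforcement configuration are fresh, so the increments $(\tau_{k+1}-\tau_k,\,|X_{\tau_{k+1}}|-|X_{\tau_k}|)$ are i.i.d.\ for $k\ge1$ and the strong law gives $v(u_1,u_0)=\E[\,|X_{\tau_2}|-|X_{\tau_1}|\,]/\E[\tau_2-\tau_1]$, a deterministic constant, positive by Theorem \ref{thm:speed}. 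In the recurrent regime $v=0$, and since $(d-1)u_0+u_1$ is increasing in each coordinate the ``smaller'' walk cannot be transient while the ``larger'' one is recurrent, so $v(u_1,u_0)\le v(u_1',u_0')$ is trivial unless both walks are transient. Hence it suffices, for $(u_1,u_0)\le(u_1',u_0')$ coordinatewise, to build a coupling $(\X,\X')$ of the two walks on the same $d$-regular tree with $|X'_n|\ge|X_n|$ for all $n$ almost surely.

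The coupling rests on a one-step stochastic domination: at a vertex with $k$ visited child-edges the probability of retreating to the parent is $\bigl(1+ku_1+(d-k)u_0\bigr)^{-1}$, which is decreasing in $u_1$, in $u_0$, and — when $u_1\ge u_0$ — in $k$; moreover every forward step changes $|X_n|$ in the same way regardless of which child is chosen. One therefore runs the two walks on common randomness, maintaining the invariant that $X'$ is at least as deep as $X$, that when they occupy the same vertex $X'$ has at least as many reinforced child-edges there, and more globally that the explored/reinforced region of $X'$ dominates that of $X$ (which is favourable precisely because $u_1\ge u_0$ makes reinforced edges more attractive for forward steps). Whenever $X'$ retreats one forces $X$ to retreat as well; whenever the two walks are at different depths one lets $X'$ ``wait'' in the regeneration bookkeeping while $X$ completes its excursion below $X'$'s level, matching the sub-excursions of $X$ to $X'$ standing still; on a $d$-regular tree the exchangeability of the $d$ child subtrees gives enough freedom to relabel children so that the reinforcement counts stay ordered. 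Carried through this yields $|X'_n|\ge|X_n|$ for all $n$, hence $v(u_1',u_0')\ge v(u_1,u_0)$.

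For strict monotonicity in the transient regime, observe that in each regeneration cycle there is positive probability of a configuration in which $X$ retreats but $X'$ does not — positive precisely because $\bigl(1+ku_1+(d-k)u_0\bigr)^{-1}>\bigl(1+ku_1'+(d-k)u_0'\bigr)^{-1}$ strictly whenever $(u_1,u_0)\ne(u_1',u_0')$ — so the coupled depth gap $|X'_{\tau_k}|-|X_{\tau_k}|$ has strictly positive drift across i.i.d.\ cycles and the strong law forces $v(u_1',u_0')>v(u_1,u_0)$. The main obstacle, and the reason this remains a conjecture while only Theorem \ref{thm:mono} is proved, is maintaining the invariant: the two trajectories, and hence their reinforced-edge sets, genuinely diverge, so the faster walk can reach a vertex whose local environment is \emph{worse} (fewer reinforced children, or a region it has already backtracked through) than the corresponding environment of the slower walk, at which point the one-step domination fails and the coupling can break — and a single backtrack can trigger this. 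Already in the unreinforced biased case monotonicity is known only for large bias \cite{BAFS}; here the self-interaction adds a second mechanism by which the coupled environments drift apart. Overcoming this seems to require either a bias large enough that retreats are rare (exactly the hypothesis of Theorem \ref{thm:mono}) or a non-local comparison — an expansion or change-of-measure argument — that does not rely on step-by-step domination, and I expect that to be the hard part.
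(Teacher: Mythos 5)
This statement is a \emph{conjecture}, not a theorem: the paper immediately follows it with ``This is simple to prove when $d=1$\dots We do not know how to prove this result when $d>1$,'' so there is no proof in the paper to compare against. Your proposal is consequently not a proof either, and it is internally inconsistent on this point: the middle paragraph asserts that the coupling, ``carried through,'' yields $|X'_n|\ge|X_n|$ for all $n$, yet your final paragraph correctly explains that the coupling invariant cannot be maintained --- once the two trajectories diverge, the faster walk can find itself at a vertex whose local reinforcement configuration is strictly worse than the slower walk's, the one-step domination fails, and a single backtrack can already trigger this. You cannot both claim the coupling goes through and identify the mechanism by which it breaks.

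Your diagnosis of the obstacle is accurate and matches the paper's own assessment: the step-by-step domination used in \cite{BAFS} is fragile for self-interacting walks because the reinforced-edge sets genuinely decouple, which is exactly why the paper proves only the partial result Theorem \ref{thm:mono} under the high-bias, small-reinforcement regime (there backtracks are rare, so decouplings are controllable and contribute only a second-order error to $\E[|X_{\tau_1}^{(\beta)}|-|X_{\tau_1}^{(\beta+\vep)}|]$). Note also one small misalignment with the paper's actual partial result: Theorem \ref{thm:mono} proves the speed is \emph{decreasing} in the reinforcement $\beta$ (multiplicative case), which corresponds to $u_0 = \alpha/(1+\beta)$ decreasing and $u_1=\alpha$ fixed, so it is consistent with --- but does not follow from --- joint monotonicity in $(u_0,u_1)$; your framing via $(u_1,u_0)\le(u_1',u_0')$ is the right generality for the conjecture, but the proved theorem is a one-parameter slice of it, not a corollary. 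The correct takeaway is the one you reach in your last sentence: either much larger bias or a genuinely non-local argument (expansion, Girsanov) seems to be needed, and the conjecture remains open.
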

This is simple to prove when $d=1$ (i.e.~on $\Z$).  We do not know how to prove this result when $d>1$.

Note that in Example \ref{exa:mult}, $u_1$ does not depend on $\beta$ and $u_0$ is decreasing in $\beta$.  In Example \ref{exa:add}, $u_1$ is {\it increasing} in $\beta$ when $\alpha<1$.

By adapting a coupling argument of \cite{BAFS}, we can prove that the speed is monotone decreasing in the reinforcement for high biases and small reinforcements.  This is the content of our second main result.

\begin{THM}
\label{thm:mono}
For every $\alpha,d$  such that $\alpha d \ge 150$ there exists $\beta_0(\alpha,d)>0$ such that the speed $v(\beta)$ of  multiplicative once-reinforced random walk on $d$-regular trees  is {(strictly)} monotone decreasing in $\beta$ for all $\beta\in [0,\beta_0]$.  
\end{THM}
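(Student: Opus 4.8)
The plan is to prove Theorem \ref{thm:mono} by constructing, for $\beta$ small and $\alpha d$ large, a coupling between the multiplicative once-reinforced walk $\Xb$ with parameter $\beta$ and the walk $\Xe$ with parameter $\beta+\vep$ (for $\vep>0$ small) on the same $d$-regular tree, such that almost surely $\Xb$ is ``ahead of'' $\Xe$ in the appropriate sense (e.g.~$|\Xb_n| \ge |\Xe_n|$ for all $n$, up to a time change), so that the regeneration structure yields $v(\beta)\ge v(\beta+\vep)$, with strict inequality coming from a positive-probability event on which the coupling strictly separates the two walks. The general mechanism is the one the authors attribute to \cite{BAFS}: when the walk has a large bias \emph{independent of the history} (which here is guaranteed by $\alpha d\ge 150$, forcing a uniform drift away from the root even in the worst local reinforcement configuration), regeneration times are abundant and have good tail bounds, and monotonicity of the displacement under the coupling transfers to monotonicity of the speed $v(\beta)=\lim |\Xb_n|/n$.

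The key steps, in order: (i) Set up regeneration times for the once-reinforced walk on the $d$-regular tree and show, using the uniform-bias hypothesis $\alpha d\ge 150$, that these exist, are i.i.d.~in the usual sense after the first, and have finite mean (this likely follows from or parallels the positive-speed part of Theorem \ref{thm:speed} together with standard arguments); in particular $v(\beta)>0$ and is given by a renewal-reward ratio. (ii) Build the coupling of $\Xb$ and $\Xe$: the delicate point is that the two walks have different step distributions at a site depending on the set of visited children, so one must carefully define a joint update rule (a ``monotone coupling'' of the two transition kernels at each site) so that $\Xe$ never gets strictly closer to the root than $\Xb$, exploiting that increasing $\beta$ only \emph{decreases} $u_0=\alpha/(1+\beta)$ while leaving $u_1=\alpha$ fixed — i.e.~increasing $\beta$ makes stepping to unvisited children relatively less likely, hence is ``more recurrent.'' One has to handle the history-dependence: the sets of visited edges for the two walks will differ, so the coupling must be designed so that at the relevant comparison sites the visited-set for $\Xb$ dominates (or is otherwise controlled relative to) that of $\Xe$ — this is the ``highly non-trivial'' part the authors advertise. (iii) Conclude that under the coupling $|\Xe_n|\le|\Xb_n|$ for all $n$ (possibly after a suitable time reparametrization handling the lazy/parent moves), hence $v(\beta+\vep)\le v(\beta)$; since $\vep>0$ is arbitrary small and the argument works uniformly for $\beta,\beta+\vep\in[0,\beta_0]$, $v$ is monotone decreasing on $[0,\beta_0]$. (iv) Upgrade to strict monotonicity: identify an event of positive probability (realized infinitely often between regenerations, by ergodicity/renewal) on which the coupled walks make genuinely different moves in a way that costs $\Xe$ at least one unit of displacement relative to $\Xb$; a Borel–Cantelli / renewal-reward computation then gives $v(\beta+\vep)<v(\beta)$ strictly.

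The main obstacle I expect is step (ii), the construction and verification of the coupling in the presence of reinforcement. Unlike the unreinforced case of \cite{BAFS}, here the transition probabilities depend on the entire history through the visited-edge set, and a naive ``step-to-the-same-relative-child'' coupling will not preserve any clean domination because the two walks diverge and accumulate different reinforcement patterns. The resolution should be to couple the walks so that they agree as long as possible (synchronously traversing the same edges), and at the first place they are forced apart, to arrange that it is always $\Xe$ that takes the step toward the root while $\Xb$ either matches or goes forward — and crucially to show that after such a discrepancy the configuration can be ``reset'' or that the discrepancy only ever helps $\Xb$, using monotonicity of the kernel in $\beta$ (stochastic domination of the forward-step probability) at every site regardless of which subset of children is reinforced. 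Making this precise — in particular controlling how differing visited-sets feed back into future step probabilities, and ensuring the domination is maintained across regeneration blocks so that the renewal-reward ratios can be compared — is where the real work lies; the hypothesis $\alpha d\ge 150$ enters to guarantee that the ``bad'' events where the coupling could fail are rare enough (the walk has enough uniform forward drift) that the regeneration structure survives and the comparison of mean displacements per mean regeneration time goes through.
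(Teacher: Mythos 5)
Your high-level strategy (coupling, regeneration structure, renewal--reward ratio, reliance on the large bias $\alpha d\ge 150$) points in the same direction as the paper, but the specific coupling you aim to construct in steps (ii)--(iii) does not exist, and this is exactly the subtlety the regeneration machinery is designed to circumvent.

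You want a pathwise domination: a coupling of $\Xb$ and $\Xe$ under which $|\Xe_n|\le|\Xb_n|$ for all $n$. This cannot be arranged. At the first time the two walks disagree --- say $\Xe$ steps to the parent while $\Xb$ steps to a child --- their reinforcement histories diverge irrevocably. From then on $\Xe$ is at a site with its own pattern of visited children, and there is no mechanism to prevent $\Xe$ from later taking a long run of forward steps while $\Xb$ takes a backward step, reversing the ordering. Your suggestion that ``after such a discrepancy the configuration can be reset'' is precisely the gap: nothing resets. If a pathwise domination coupling did exist, it would give monotonicity for all $\alpha d>1$ (not only for $\alpha d\ge 150$), which is a strong signal that the strategy is not available.

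What the paper actually does is fundamentally weaker and more quantitative. It introduces a \emph{third} walk $\Y$, a biased nearest-neighbour walk on $\Z$, coupled so that whenever $\Y$ steps right, \emph{both} $\Xb$ and $\Xe$ step away from the root (Lemma \ref{lem:super}). This is achievable because both walks' probability of stepping to the parent is always at most $q_0^{(\beta+\vep)}$. The regeneration times of $\Y$ are then common regeneration times for all three walks, giving an i.i.d.\ block structure even though the discrepancy $|\Xb_{\tau_1}|-|\Xe_{\tau_1}|$ is \emph{allowed to be negative}. The theorem then reduces (Lemmas \ref{lem:super0}, \ref{lem:tree_walks}, \ref{almadelta}) to showing that the \emph{expected} discrepancy at $\tau_1$ is strictly positive, which is a competition between two events: on $\{|\mathfrak{B}|=1\}$ (one backward step of $\Y$ before $\tau_1$) no negative discrepancy can occur and a positive discrepancy of order $\vep/(\alpha d)$ is created; on $\{|\mathfrak{B}|=k\}$, $k\ge 2$, the discrepancy is at least $-2k$, but this happens with probability of order $\vep\, k\, (\text{const}/\alpha d)^k$ (Lemmas \ref{mind1} and \ref{majdk}, built on the explicit coupling (c1)--(c7) of Section \ref{sec:coupling}). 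The condition $\alpha d\ge 150$ is exactly what makes the geometric sum of the latter smaller than the former. None of this quantitative machinery appears in your proposal, and it cannot be avoided: the strict inequality in the theorem comes from this estimate, not from pathwise domination plus Borel--Cantelli as you suggest in step (iv).
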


\begin{REM}
Here, we do not work on improving the lower bound $\alpha d\ge150$, but, in Section \ref{sectimprove}, we explain how this bound can be turned into $\alpha d\ge22$ (and probably less).  This improvement arises from looking at the coupling of Section \ref{sec:coupling} in finer detail, requiring more involved and precise computations.
\end{REM}
\begin{REM}
Note that  Theorem \ref{thm:mono} concerns only multiplicative reinforcement. One could obtain a similar result for additive reinforcement by slightly adapting the proof we give here: in Section \ref{proofadd}, we provide instructions for how to do this.
\end{REM}

The remainder of the paper is organised as follows.  In Section \ref{sec:speed} we prove Theorem \ref{thm:speed}.  In Section \ref{sec:mono} we prove Theorem \ref{thm:mono}.


\section{Proof of Theorem \ref{thm:speed}}
\label{sec:speed}
In this section we prove Theorem \ref{thm:speed}.  Our immediate objective will be defining a probability space on which we have an appropriate walk $\X=\X(\mc{G})$ defined on each tree $\mc{G}$ as well as a family of walks $\X^{\sss(\Gp)}=\X^{\sss(\Gp)}(\mc{G})$ on certain subtrees $\Gp$ of $\mc{G}$.  Moreover, our probability space will be chosen to allow different starting ``environments/configurations'', $\omega$, corresponding to the possible configurations of visited edges that a walker could see in finite time.  The law of a MAD walk $\X$ on a fixed tree $\mc{G}$ with configuration $\omega$ started at $\r^{-1}$ will be denoted by $\bP^{\mc{G}}_\omega$.

Let us define an infinite-branching tree (or multi-index set) $\mc{M}$, that will contain every tree $\mc{G}$ with vertices of finite degree.  

We write $\alpha_{\{-1\}}=\r^{-1}$ for the unique element of $\mc{M}$ of generation $-1$ and $\alpha_{\{0\}}=\r$ denotes the unique child of $\alpha_{\{-1\}}$ (i.e.~the unique element of $\mc{M}$ of generation 0).  For $n\ge 1$, and $\alpha_{\{1\}},\dots, \alpha_{\{n\}} \in \N$, each multi-index ${\alpha}=\alpha_{\{-1\}}\alpha_{\{0\}}\alpha_{\{1\}}\dots \alpha_{\{n-1\}}\alpha_{\{n\}}$ of generation $n$ is the $\alpha_{\{n\}}$-th child of its parent ${\alpha^{-1}}=\alpha_{\{-1\}}\alpha_{\{0\}}\alpha_{\{1\}}\dots \alpha_{\{ n-1\}}$.  Let $\mc{M}=\{\alpha_{\{-1\}}\alpha_{\{0\}}\alpha_{\{1\}}\dots \alpha_{\{n-1\}}\alpha_{\{n\}}:n \in \Z_+, \alpha_1,\dots, \alpha_n \in \N\}$ denote the set of such multi-indices.  For $\nu, \mu \in \mc{M}$ we write $\nu\sim \mu$ if $\nu=\mu^{-1}$ or $\mu=\nu^{-1}$.  Write $|\nu|$ for the generation of $\nu$, and write $\nu\le \mu$ if $|\nu|\le |\mu|$ and $\mu_{\{i\}}=\nu_{\{i\}}$ for each $i\le |\nu|$.  Let $E_\mc{M}=\{[\nu, \mu]  \in \mc{M}:\nu\sim \mu\}$ be the set of (unordered) edges of $\mc{M}$.  For simplicity of notation, we also write $(\nu_i)_{i \in \N}$ for the children of $\nu \in \mc{M}\setminus \{\r^{-1}\}$.

Define $\Ccal$ to be the collection of maps $\omega \colon E_{\mc{M}} \rightarrow \{0, 1\}$ that satisfy the following two properties: 
\begin{itemize}
\item[(i)]  For any $\nu \in \mc{M}\setminus\{\r^{-1}\}$,  we have  
 $$  \exists i \in \N \mbox{ such that } \omega(\nu, \nu_i) = 1 \qquad \implies  \qquad \omega(\parent{\nu}, \nu) = 1;$$
\item[(ii)] $\Gamma_{\omega}=\omega^{-1}(1)\equiv \{e\in E_{\mc{M}}:\omega(e)=1\}$ is a finite set.  
\end{itemize}
We call $\Ccal$ the space of coherent configurations.  Define the configuration $\omega^* \in \Ccal $ by
\begin{equation}
\omega^*(e)=\begin{cases}
1, & \text{ if } e=(\r^{-1}, \r),\\
0, & \text{ otherwise}.
\end{cases}
\end{equation}


Fix a subtree $\mc{G}\subset \mc{M}$ with finite degrees.  We define the law $\bP_\omega^{\mc{G}}$ of a walk $\X$ on $\mc{G}$ as follows.  Set $X_0 = \r^{-1}$ almost surely.  Given $\mc{F}_n=\sigma(X_k:k\le n)$ and $\omega \in \mc{C}$ we 
define  $\mc{E}_{\omega}(n)=\Gamma_{\omega}\cup \mc{E}_\varnothing(n)$, where we recall that $ \mc{E}_\varnothing(n)$ is the set of edges crossed at time $n$.
If $\nu\ne \r^{-1}$ define
\begin{align}
W_n(\nu,\nu^{-1})&=1,
\end{align}
and for $i\in [\del(\nu)]$ define
\begin{align}
W_n(\nu,\nu_i)&=u_1\indic{[\nu,\nu_i]\in \mc{E}_\omega(n)}+u_0\indic{[\nu,\nu_i]\notin \mc{E}_\omega(n)}.
\end{align}
For $\mu\sim X_n$, define
\begin{align}
\bP^{\mc{G}}_\omega(X_{n+1}=\mu|\mc{F}_n)&:=\frac{W_n(X_n,\mu)}{\sum_{\nu\sim X_n} W_n(X_n,\nu)}.
\end{align}
In particular, when $X_n=\r^{-1}$ there is one child ($\del(\r^{-1})=1$) and no parent so $\bP^{\mc{G}}_\omega(X_{n+1}=\r|\mc{F}_n)\indic{X_n=\r^{-1}}=\indic{X_n=\r^{-1}}$ (the walk always steps to $\r$ from $\r^{-1}$).  
In the above we have set $X_0=\r^{-1}$.  If we instead set $X_0=\nu$, for some $\nu\in V$, then we write the law of the process 
above as $\bP^{\Gcal}_{\omega,\nu}$ and expectation with respect to $\bP^{\Gcal}_{\omega, \nu}$ will be denoted by 
$\bE^{\Gcal}_{\omega,\nu}$.  In particular, $\bP^{\Gcal}_{\omega, \r^{-1}}=\bP^{\Gcal}_{\omega}$.  The corresponding averaged measures are denoted by $\P_{\omega, \nu}$ and $\P_{\omega}$, with the former being a conditional measure (conditional on $\nu\in \mc{G}$).

When we deal with the canonical configuration $\omega^*$, we may drop it from the notation.  This is consistent with the notation defined in Section \ref{sec:themodel}.

\subsection{A coupling of walks and environments}
\label{sec:coupling1}


Let $(\Omega, \mc{F},\P)$ denote a probability space on which ${\bf Y}=(Y(\nu,\mu,k): (\nu,\mu) \textrm{ in } \mc{M}^2, \mbox{with $\nu \sim \mu$}, \textrm{ and }k \in \N)$ is a family of independent mean 1 exponential random variables (here $(\nu,\mu)$ is considered an {\it ordered} pair); and $\mc{G}\subset \mc{M}$ is a (supercritical) Galton-Watson tree satisfying \eqref{no_leaves}, that is independent of ${\bf Y}$.

For any vertex $\nu\ne \r^{-1}$, write $\nu_0=\nu^{-1}$ {and recall that $\nu_1,\dots,\nu_{\partial(\nu)}$ denote the children of $\nu$}.  
For $\bar{k}=(k_0,\dots,k_{\del(\nu)}) \in \N^{\del(\nu)+1}$, let $A_{\bar{k},n,\nu}=\{X_n = \nu\}\cap\bigcap_{0\le s\le\del(\nu)} \{\#(1\le j \le n \colon [X_{j-1},X_j] = [\nu,\nu_s]) = k_s\}$. Define the quantities
\begin{align} \label{wj1}
w_j(\nu,\nu^{-1})&=1,\, \forall j\ge0,\\ \label{wj2}
w_0(\nu,\nu_i)&=u_0\indic{[\nu,\nu_i]\notin \Gamma_\omega}+u_1\indic{[\nu,\nu_i]\in \Gamma_\omega},\ \ \forall i\in [\del(\nu)]\\ \label{wj3}
w_j(\nu,\nu_i)&=u_1,\, \forall j\ge1, \ \forall i\in [\del(\nu)].
\end{align}
Then on the event 
\begin{align}\label{ursula0}
 A_{\bar{k},n,\nu}\cap \left\{\sum_{i=0}^{k_j}\frac{  Y(\nu, \nu_j, i)}{w_i(\nu, \nu_j) } = \min_{s \in \{0, 1, \ldots, \del(\nu)\}}\Big\{\sum_{i=0}^{k_s}\frac{Y(\nu, \nu_s, i)}{w_i(\nu, \nu_s)} \Big\}\right\}, 
 \end{align}
 we set $X_{n+1} = \nu_j$. It is easy to check, from properties of independent exponential random variables and the memoryless property, that this provides a construction of a MAD walk on $\mc{G}$ (with law $\bP^{\mc{G}}_\omega$). This continuous-time embedding is classical: it is called {\it Rubin's construction} and  can be found in \cite{Dav90}, for instance.

\begin{DEF}\label{spect} A subtree $\Gp = (V', E')$ of $\mc{G}$ is said to be {\it special} if the vertex $\r'$ in $V'$ with minimal distance from $\r^{-1}$ has degree one in $\Gp$.  Let $\mc{S}(\mc{G})$ denote the set of special subtrees of $\mc{G}$.
\end{DEF}


Fix $\mc{G}$, and define the MAD walk $\X$ on $\mc{G}$ as above.  For $\Gp = (V', E')\in \mc{S}(\mc{G})$, let $\tau_1=\tau_1(\Gp)=\inf\{m\in \N:[X_{m-1},X_m]\in E'\}$.  For $n\ge 2$ recursively define $\tau_n=\tau_n(\Gp)=\inf\{m>\tau_{n-1}(\Gp):[X_{ m-1},X_{m}]\in E'\}$ (this is the collection of times that $\X$ walks on $E'$).  Then we define a walk $\X^{\sss (\Gp)}=(X^{\sss (\Gp)}_n)_{n\ge 0}$ on $\Gp$ as follows:

Set $\X^{\sss (\Gp)}_0=\r'$, and 
\begin{align}
\X^{\sss (\Gp)}_n=\X_{\tau_n}
\end{align} 
for all strictly positive $n\le n_\infty:=\inf\{n: \tau_n=\infty\}-1$.  If $n_\infty$ is infinite then this defines the entire walk $\X^{\sss (\Gp)}$.  In particular it is easy to see that $\X^{\sss (\mc{G})}=\X$.  Otherwise $n_\infty$ is finite and we continue to generate the additional steps of $\X^{\sss (\Gp)}$ according to the unused $Y$ variables as follows:

For $n\ge n_\infty$ and $\bar{k}=(k_\mu)_{\mu: [\nu,\mu]\in E'} \in \N^{|\mc{G}'\cap C_{\nu}|+1}$, let $A^{\sss (\Gp)}_{\bar{k},n,\nu}=\{X^{\sss (\Gp)}_n = \nu\}\cap\bigcap_{\mu: [\nu,\mu]\in E'} \{\#(1\le j \le n \colon [X^{\sss (\Gp)}_{j-1},X^{\sss (\Gp)}_j] = [\nu,\mu]) = k_\mu\}$.  For $\nu'$ such that $[\nu, \nu']\in E'$, on the event 
\begin{align}\label{ursula}
A^{\sss (\Gp)}_{\bar{k},n,\nu}\cap \left\{\sum_{i=0}^{k_{\nu'}}\frac{Y(\nu, \nu', i)}{w_i(\nu, \nu')} = \min_{\mu: [\nu,\mu]\in E'}\Big\{\sum_{i=0}^{k_{\mu}}\frac{Y(\nu, \mu, i)}{w_i(\nu, \mu)} \Big\}\right\}, 
\end{align}
 we set $X^{\sss (\Gp)}_{n+1} = \nu'$, where the $w_i$'s are defined in \eqref{wj1}, \eqref{wj2} and \eqref{wj3}.   

It is immediate from this definition that the steps taken by $\X$ on the edges $E'$ of $\Gp$ up to time $\tau_{n_{\infty}}$ are exactly those taken by $\X^{\sss (\Gp)}$ up to time $n_{\infty}$, that the constructions are consistent (i.e.~\eqref{ursula} is a continuation of \eqref{ursula0} for $n\ge n_\infty$) and that after time $\tau_{n_\infty}$ the walk $\X$ never walks on $E'$.  

Note that the steps of $\X^{\sss (\Gp)}$ from leaves of $E'$ are deterministic and the steps along the edges of $E'$ are determined {\it only} by the clocks $Y(\cdot,\cdot,\cdot)$ attached the edges of $E'$.  From this it is easy to see that for $\Gp,\mc{G}''\in \mc{S}$, the walks $\X^{\sss (\Gp)}$ and $\X^{\sss (\mc{G}'')}$ are independent if:  {\it whenever $[e_1,e_2]\in E'\cap E''$, we have that each of $e_1$ and $e_2$ is a leaf of either $\Gp$ or $\mc{G}''$}.   
Moreover this pairwise independence extends to independence of any countable collection of such walks with such overlaps.  

We call $\X^{\sss (\Gp)}$ the {\it extension} of $\X$ on $\Gp$.  Of particular interest will be the case where $\Gp=[ \nu,\mu]$ is the unique self-avoiding path connecting $\nu$ to $\mu$, for some $\nu,\mu\in\mc{G}$ such that $\nu<\mu$.  In this case we write $\X[ \nu,\mu]$ for $\X^{\sss([ \nu,\mu])}$ and we denote $\bP_\omega^{[\nu,\mu]}$ its associated measure.


\subsection{Walks on paths}
In this section we describe various walks on paths that are important for understanding walks $\X[\nu, \mu]$ (and hence also $\X$).

\begin{DEF}
Given parameters $a,b\in (0,1)$ and $\ell\in\Z_+$, we call a nearest neighbour random walk $\boldsymbol{Z}$ on $\Z_+$ with natural filtration $(\mc{F}_n)_{n \in\Z_+}$ an $(a,b,\ell)$-MAD (maximum acts differently) walk if $Z_0=\ell$ almost surely and 
\[\P(Z_{n+1}=Z_n+1|\mc{F}_n)=\begin{cases}
b, & \textrm{ if } Z_n=\max_{k\le n}Z_k\\
a, & \textrm{ otherwise}.
\end{cases}\]
\end{DEF}
\begin{LEM}
\label{lem:pam}
Fix $a,b\in(0,1)$, $\ell\ge0$ and let $\boldsymbol{Z}$ be an $(a,b,\ell)$-MAD walk.  Then 
\begin{equation}
\phi_{\ell,n}:=\P(\boldsymbol{Z} \textrm{ hits }n \textrm{ before } -1)=
\begin{cases}
\prod_{j=\ell}^{n-1}\frac{b(j+1)}{bj+1}, & \text{ if }a=\frac{1}{2}\\
\prod_{j=\ell}^{n-1}\frac{b-b\zeta^{j+1}}{b-\zeta^{j+1}(1-\frac{1-b}{\zeta})}, & \text{ otherwise,}
\end{cases}
\end{equation}
where $\zeta=(1-a)/a$.
\end{LEM}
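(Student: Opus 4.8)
The plan is to compute the hitting probabilities via the standard first-step/harmonic-function approach, but adapted to the fact that the walk is not Markov on $\Z_+$ — instead, the pair (current position, current running maximum) is Markov. The key observation is that an $(a,b,\ell)$-MAD walk, once it has reached its running maximum $M$, behaves like a biased walk (with up-probability $a$) below $M$ until it either returns to $-1$ or first steps from $M$ to $M+1$ (a ``fresh'' step with probability $b$). So I would introduce, for $m \ge \ell$, the quantity $h_m := \P(\text{the walk, currently at its running maximum } m, \text{ ever reaches } m+1 \text{ before } -1)$, and argue that $\phi_{\ell,n} = \prod_{j=\ell}^{n-1} h_j$, since to hit $n$ before $-1$ the walk must successively achieve each new maximum $\ell+1, \ell+2, \dots, n$ before hitting $-1$, and by the strong Markov property applied at the times of achieving new maxima (where the ``environment'' seen below is always the same: all those edges are now ``visited''), these events are independent with the stated probabilities.

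First I would set up the computation of $h_m$. Conditioning on the first step from $m$: with probability $b$ the walk moves to $m+1$ and we are done (success); with probability $1-b$ it moves to $m-1$, and now it is a \emph{biased} nearest-neighbour walk on $\{-1,\dots,m\}$ with $\P(\text{up})=a$ (since all edges from $0$ up to $m$ have now been traversed), started at $m-1$, and $h_m$ equals $b$ plus $(1-b)$ times the probability that this biased walk reaches $m$ before $-1$. The latter is the classical gambler's-ruin quantity: for a biased walk with up-probability $a$ started at $m-1$ on $\{-1,\dots,m\}$, writing $\zeta=(1-a)/a$, the probability of hitting $m$ before $-1$ is $\frac{1-\zeta^{m}}{1-\zeta^{m+1}}$ when $a\ne \tfrac12$, and $\frac{m}{m+1}$ when $a=\tfrac12$. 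Substituting and simplifying should yield $h_m = \frac{b(m+1)}{bm+1}$ in the symmetric case and $h_m = \frac{b - b\zeta^{m+1}}{b - \zeta^{m+1}(1 - \frac{1-b}{\zeta})}$ in the biased case; I would carry out this algebra carefully (in the biased case it amounts to $h_m = b\cdot\frac{1-\zeta^{m+1}}{(1-\zeta)\cdot\frac{1-\zeta^m}{1} \cdot(\dots)}$ — the precise manipulation is routine but fiddly, combining $b + (1-b)\frac{1-\zeta^m}{1-\zeta^{m+1}}$ over a common denominator). Then $\phi_{\ell,n}=\prod_{j=\ell}^{n-1}h_j$ gives both stated formulas, with the convention that an empty product ($\ell=n$) equals $1$.

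To justify the product formula rigorously, I would argue as follows. Define stopping times $\sigma_\ell = 0$ and, for $k\ge 1$, $\sigma_{\ell+k}=\inf\{n: Z_n = \ell+k\}$ (the first time the maximum reaches $\ell+k$), with $\sigma_{\ell+k}=\infty$ if this never happens before hitting $-1$. On $\{\sigma_{\ell+k}<\infty\}$, by the strong Markov property the future of the walk is that of an $(a,b,\ell+k)$-MAD walk started at its maximum $\ell+k$ (crucially, the set of previously-traversed edges is exactly $\{-1,0\},\{0,1\},\dots,\{\ell+k-1,\ell+k\}$, which is the same configuration regardless of the path taken to get there, so the conditional law depends only on the current position and maximum). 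Hence $\P(\sigma_{\ell+k+1}<\infty \mid \sigma_{\ell+k}<\infty) = h_{\ell+k}$, and $\phi_{\ell,n}=\P(\sigma_n<\infty)=\prod_{k=0}^{n-\ell-1}h_{\ell+k}$. I expect the main obstacle to be purely bookkeeping: verifying that ``environment seen below the maximum is always the full set of edges, hence the walk below $M$ is genuinely biased with parameter $a$'' is intuitively clear but needs a clean statement, and the algebraic simplification of $h_m$ in the asymmetric case into exactly the form written in the lemma requires care with the $\zeta$ and $(1-b)/\zeta$ terms — I would double-check it by testing the limit $a\to\tfrac12$ (i.e. $\zeta\to 1$), where L'Hôpital or a Taylor expansion should recover $\frac{b(m+1)}{bm+1}$, confirming consistency of the two cases.
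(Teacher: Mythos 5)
Your overall strategy is exactly the paper's: decompose $\phi_{\ell,n}$ as a product over levels $j=\ell,\dots,n-1$ of the probability $h_j$ that the walk, freshly at its running maximum $j$, reaches $j+1$ before $-1$; justify the product via the strong Markov property at the times of achieving new maxima (noting that the ``seen'' configuration below the maximum is always the same); and compute $h_j$ by a first-step decomposition combined with the biased gambler's-ruin probability below the running maximum. That much is fine and mirrors the paper.

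The gap is in your recursion for $h_m$. You write that $h_m$ equals $b$ plus $(1-b)$ times the probability $a_m := \frac{1-\zeta^m}{1-\zeta^{m+1}}$ (resp.\ $\frac{m}{m+1}$ when $a=\tfrac12$) that the biased walk returns from $m-1$ to $m$ before $-1$. But returning to $m$ is not success: once back at $m$ the walk is again at its running maximum and must \emph{still} reach $m+1$ before $-1$, an event which by the strong Markov property again has probability $h_m$. So the correct first-step equation is
\[
h_m \;=\; b \;+\; (1-b)\,a_m\,h_m,
\qquad\text{hence}\qquad
h_m \;=\; \frac{b}{1-(1-b)a_m},
\]
rather than $h_m = b + (1-b)a_m$. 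The latter does not reduce to the lemma's expressions: for instance with $a=b=\tfrac12$, your formula gives $h_1 = \tfrac12 + \tfrac12\cdot\tfrac12 = \tfrac34$ and hence $\phi_{0,2}=h_0 h_1 = \tfrac38$, whereas a direct gambler's-ruin computation (and the lemma) gives $\phi_{0,2}=\tfrac13$. With the corrected recursion, $\frac{b}{1-(1-b)a_m}$ does simplify to $\frac{b(m+1)}{bm+1}$ (resp.\ $\frac{b-b\zeta^{m+1}}{b-\zeta^{m+1}(1-\frac{1-b}{\zeta})}$), exactly as in the paper. Your consistency check (taking $a\to\tfrac12$) would never catch this mistake, since it only tests agreement between your two (equally wrong) formulas; checking a small concrete case such as $\phi_{0,2}$ against the direct gambler's-ruin answer would.
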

\proof 
Suppose that the walk $\boldsymbol Z$ has reached level $j$ for the first time (before  reaching -1).  
Then with probability $b$ it reaches $j+1$ on the next step.    With probability $1-b$  it steps to $j-1$.  If this happens then via the classical gambler's ruin problem we see that the probability $a_j$ of hitting $j$ again before -1 is given by 
\[a_j=\begin{cases}
\frac{j}{j+1}, & \text{ if }a=\frac{1}{2}\\
\frac{\sum_{r=0}^{j-1}\zeta^{r}}{\sum_{m=0}^j \zeta^m}=\frac{1-\zeta^j}{1-\zeta^{j+1}}, & \text{ otherwise}.
\end{cases}\]
It follows that the probability that the walk $\boldsymbol Z$, having reached $j$ for the first time,  reaches $j+1$ before $-1$ is the solution $\eta_j$ to 
\begin{align}
\eta_j=b+(1-b)a_j\eta_j.
\end{align}
Solving gives
\begin{align}
\eta_j=\frac{b}{1-(1-b)a_j}=\begin{cases}
\frac{b(j+1)}{bj+1}, & \text{ if }a=\frac{1}{2}\\
\frac{b-b\zeta^{j+1}}{b-\zeta^{j+1}(1-\frac{1-b}{\zeta})}, & \text{ otherwise. }
\end{cases}
\end{align}
The result follows from the fact that in order to reach $n$ before hitting -1 we must reach $j+1$ from $j$ (before hitting -1) for each $j=\ell,\dots,n-1$.
\qed

\medskip

Next, we consider the original MAD $\X$ on $\Gcal$. Recall that $\X[\nu, \mu]$ is a walk on the interval $[\nu,\mu]\subset \mc{G}$, starting from $\nu$.  Let $S_\mu=\inf\{n\ge 1:X_n[\nu, \mu]=\mu\}$ be the first hitting time of $\mu$ and $S_\nu=\inf\{n\ge 1:X_n[\nu, \mu]=\nu\}$ be the first return time to $\nu$ for this walk.  The following Corollary follows immediately from Lemma \ref{lem:pam} with $a=u_1/(1+u_1)$ and $b=u_0/(1+u_0)$ (and therefore $\zeta=u_1^{-1}$).
\begin{COR}
\label{cor:path}
 Fix a vertex $\mu\in \mc{G}$ at level $n$. Then
$$\psi_n:=\bP^{[\r^{-1},\mu]}_{\omega^*}\left(S_{\mu}<S_{\r^{-1}}\right)=\begin{cases}
 \prod_{j=0}^{n-1} \frac{u_0(j+1)}{u_0(j+1)+1}, & \text{ if }u_1=1,\\
 \prod_{j=0}^{n-1} \frac{u_0(u_1^{j+1}-1)}{u_0u_1^{j+1}+u_1-u_0-1}, & \text{ otherwise}.
\end{cases}
$$
{Moreover,  whenever $\mu$ is a descendent of $\nu$ with tree distance $n$ between them, we have that $\bP^{[\nu,\mu]}_{\omega^*,\Maaak{\nu}}\left(S_{\mu}<S_{\nu}\right)=\bP^{[\nu,\mu]}_{\omega^*,{\nu'}}\left(S_{\mu}<S_{\nu}\right)=\psi_n$, where $\nu'$ is the neighbour of $\nu$ in $[\nu,\mu]$.}
\end{COR}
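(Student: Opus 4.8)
The plan is to recognise the path walk as an instance of the $(a,b,\ell)$-MAD walk $\boldsymbol Z$ of Lemma~\ref{lem:pam} and read off the answer, with $a=u_1/(1+u_1)$ and $b=u_0/(1+u_0)$ (so $\zeta=(1-a)/a=u_1^{-1}$). For part~$(1)$, label the vertices of $[\r^{-1},\mu]$ by their levels $-1,0,\dots,n$ and follow the level process of the walk. On a path the undirected edge between levels $j$ and $j+1$ lies in $\mc{E}_{\omega^*}(m)$ exactly when level $j+1$ has already been reached, i.e.\ when the running maximum of the level process is $\ge j+1$; hence, from level $j$, the forward edge has weight $u_0$ iff $j$ is the current running maximum and weight $u_1$ otherwise, while the backward (parent) edge always has weight $1$. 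As steps are taken with probability proportional to these weights, the level process steps up with probability $b$ at a running maximum and $a$ otherwise --- precisely an $(a,b,\ell)$-MAD walk. Two bookkeeping points are harmless: the edge $(\r^{-1},\r)$ is pre-visited under $\omega^*$, but it is the parent edge of $\r$ and so carries weight $1$ regardless, so it never enters a transition probability; and the first step $\r^{-1}\to\r$ is forced and contributes a trivial factor. After that forced step the walk sits at level $0$, and $\{S_\mu<S_{\r^{-1}}\}$ is exactly $\{\boldsymbol Z\text{ hits }n\text{ before }-1\}$ for an $(a,b,0)$-MAD walk $\boldsymbol Z$, so $\psi_n=\phi_{0,n}$. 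Substituting $a=u_1/(1+u_1)$, $b=u_0/(1+u_0)$, $\zeta=u_1^{-1}$ into the $j$-th factor of the product in Lemma~\ref{lem:pam} and simplifying yields $\frac{u_0(j+1)}{u_0(j+1)+1}$ when $u_1=1$ and $\frac{u_0(u_1^{j+1}-1)}{u_0u_1^{j+1}+u_1-u_0-1}$ otherwise; the product over $j=0,\dots,n-1$ is the claimed formula.

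\textbf{The ``moreover''.} I would first note that the walk $\X[\nu,\mu]$ started at $\nu$ makes a forced first step to $\nu'$ (its only neighbour in $[\nu,\mu]$), after which it coincides with the walk started at $\nu'$; and the event $\{S_\mu<S_\nu\}$ involves neither this forced step nor the status of the edge $[\nu,\nu']$ (the parent edge of $\nu'$, hence of weight $1$ whatever its status). This gives $\bP^{[\nu,\mu]}_{\omega^*,\nu}(S_\mu<S_\nu)=\bP^{[\nu,\mu]}_{\omega^*,\nu'}(S_\mu<S_\nu)$. For the remaining equality I would use the homogeneity of $\mc{M}$: the weight-preserving graph isomorphism respecting the parent relation that sends $\{\nu^{-1}\}\cup[\nu,\mu]$ onto $[\r^{-1},\mu']$, with $\mu'$ at the appropriate level, carries $\bP^{[\nu,\mu]}_{\omega^*,\cdot}$ to $\bP^{[\r^{-1},\mu']}_{\omega^*,\cdot}$ (the single pre-visited edge again playing no role) and the first-passage event to its image, reducing the statement to part~$(1)$.

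\textbf{Main obstacle.} The proof is short; the only genuine work is the routine one-line algebra simplifying the factor of $\phi_{\ell,n}$ under $a=u_1/(1+u_1)$, $b=u_0/(1+u_0)$, together with a careful edge-by-edge verification that the peculiarities of $\omega^*$ (one pre-visited edge) and of the path walk $\X[\,\cdot\,,\mu]$ (the forced step out of $\r^{-1}$, resp.\ out of $\nu$, and the exact matching of the index $n$) never disturb a transition probability of the level process. I expect this last verification to be the main, if modest, difficulty, since the statement is phrased in terms of the path walk rather than the cleaner $(a,b,\ell)$-MAD walk, so the correspondence between the two --- and the resulting off-by-one bookkeeping in the number of factors --- has to be pinned down precisely.
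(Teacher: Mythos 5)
Your proposal is correct and follows essentially the same route as the paper: both identify the path walk's level process as an $(a,b,0)$-MAD walk with $a=u_1/(1+u_1)$, $b=u_0/(1+u_0)$ (hence $\zeta=u_1^{-1}$) and then invoke Lemma~\ref{lem:pam}. The paper's own proof is terse (a one-line reduction), while you spell out the bookkeeping about the forced step, the irrelevance of the pre-visited edge $(\r^{-1},\r)$, and the homogeneity argument for the ``moreover'' clause, but the substance is identical and the algebra you outline does simplify to the stated formulas.
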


\proof The probability in question is nothing more than the probability that an $(a,b,0)$-MAD walk $\boldsymbol{Z}$ hits $n$ before -1, with $a=u_1/(1+u_1)$ and $b=u_0/(1+u_0)$ (and therefore $\zeta=u_1^{-1}$).  Hence the result is immediate from Lemma \ref{lem:pam}.
\qed


For fixed $u_1,u_0>0$ we introduce a partial ordering on $\Ccal$ as follows. 
\begin{DEF}\label{DEF:2.5}
Fix $u_0,u_1>0$.  For $\omega, \omega' \in \Ccal$, we write $\omega \ge  \omega'$  if either:
\begin{itemize}
\item[(i)] $u_1\ge u_0$, and $\omega\ge \omega'$ pointwise (i.e.~$\omega^{-1}(1)\supset {\omega'}^{-1}(1)$), or;
\item[(ii)] $u_1\le u_0$, and $\omega\le \omega'$ pointwise (i.e.~$\omega^{-1}(1)\subset {\omega'}^{-1}(1)$).
\end{itemize}
We say that $\omega > \omega'$ if  $\omega \neq \omega'$ and $\omega \ge \omega'$. 
\end{DEF}

Then we have the following natural result (that is proved e.g.~in \cite{HS12}).
\begin{LEM}
\label{lem:pathmono}
Let $\mu$ be a descendent of $\nu$, and $\omega\ge \omega'$ (both in $\mc{C}$).  Then on the probability space of Section \ref{sec:coupling1}: if $S_{\mu}<S_{\nu}$ in environment $\omega'$ then also $S_{\mu}<S_{\nu}$ in $\omega$.
\end{LEM}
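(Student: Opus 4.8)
The plan is to exploit the explicit coupling via Rubin's construction from Section~\ref{sec:coupling1}, where the walk $\X[\nu,\mu]$ on the path $[\nu,\mu]$ is driven by the exponential clocks $Y(\cdot,\cdot,\cdot)$ together with the weights $w_i$ defined in \eqref{wj1}--\eqref{wj3}. The only place the environment $\omega$ enters these weights is through $w_0(e,\cdot)$ for the forward edges on the path: $w_0$ equals $u_1$ if the edge lies in $\Gamma_\omega$ and $u_0$ otherwise, while $w_j=u_1$ for all $j\ge 1$ on forward edges and $w_j=1$ for all $j$ on the backward edge. Passing from $\omega'$ to a larger $\omega$ (in the sense of Definition~\ref{DEF:2.5}) can only turn some $w_0$-values from $u_0$ into $u_1$ when $u_1\ge u_0$, or from $u_0$ into $u_1$ going ``down'' in the pointwise order when $u_1\le u_0$ --- in both cases the monotone ordering is designed precisely so that each forward-edge initial weight $w_0$ is (weakly) increased. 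So the first step is to record: \emph{for every edge $e$ on the path and every $i\ge 0$, $w_i^{(\omega)}(e)\ge w_i^{(\omega')}(e)$ for the forward edges and equality for the backward edge}, and that only the $i=0$ forward weights can differ between the two environments.

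The second and main step is a coupling/monotonicity argument on the path. I would set up the two walks $\X[\nu,\mu]$ in environments $\omega$ and $\omega'$ on the \emph{same} probability space of Section~\ref{sec:coupling1}, i.e.\ driven by the \emph{same} family $\bf Y$. The claim is that the trajectory in environment $\omega$ dominates that in $\omega'$ until both hit $\nu$ --- more precisely, it should reach every level of the path $[\nu,\mu]$ no later than the $\omega'$-walk does, and in particular if the $\omega'$-walk reaches $\mu$ before returning to $\nu$ then so does the $\omega$-walk. The cleanest way to see this is via the characterization in Rubin's construction: the event $\{S_\mu<S_\nu\}$ is an increasing event in the ``speed'' variables $\sum_{i=0}^{k}Y(e,\cdot,i)/w_i(e,\cdot)$ associated to forward edges and a decreasing event in the corresponding quantity for the backward edge at $\nu$. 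Increasing the forward weights $w_i$ (while fixing the $Y$'s) pointwise decreases each forward partial sum, which makes the walk more likely to step forward at every comparison in \eqref{ursula0}/\eqref{ursula}; a standard induction on the successive edge-crossing times then shows that the $\omega$-walk's level process dominates the $\omega'$-walk's level process for all time, hence $\{S_\mu<S_\nu\}$ in $\omega'$ implies $\{S_\mu<S_\nu\}$ in $\omega$. This is exactly the content attributed to \cite{HS12}, so I would either cite it directly or reproduce this short induction.

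The step I expect to be the main obstacle is making the ``domination of level processes'' induction fully rigorous, because the comparisons in \eqref{ursula0} involve, at each vertex $\nu$ visited for the $k$-th time, the accumulated clock sums over \emph{all} incident edges with the \emph{current} local crossing counts $k_s$, and these counts can differ between the two walks once their trajectories diverge. The right way around this is to track the coupling only up to the first time the two walks \emph{disagree} on a step, show that such a disagreement can only be the $\omega$-walk stepping forward when the $\omega'$-walk steps backward (never the reverse, by the weight inequality and the shared $Y$'s with equal crossing counts up to that time), and then argue that after such a disagreement the $\omega$-walk is strictly ``ahead'' on the path and remains so --- a loop-erasure / last-exit decomposition or the classical fact that for nearest-neighbour walks on an interval ``ahead'' is preserved until coalescence handles the bookkeeping. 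Since the statement is asserted to be proved in \cite{HS12}, a proof here can legitimately be brief: state the weight inequality, invoke the monotonicity of $\{S_\mu<S_\nu\}$ as an increasing event in the forward clock-speeds within Rubin's construction, and refer to \cite{HS12} for the routine induction, or include the one-paragraph induction sketched above. \qed
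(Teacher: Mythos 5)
Your preliminary observations are correct: the order in Definition~\ref{DEF:2.5} is designed precisely so that, in both cases (i) and (ii), every forward weight $w_0$ is weakly larger under $\omega$ than under $\omega'$, while all backward weights and the $w_i$ for $i\ge 1$ coincide; the shared-clock coupling of Section~\ref{sec:coupling1} is the right vehicle; and you correctly deduce that at the first time the two walks disagree (when their crossing counts still coincide) the $\omega$-walk steps forward and the $\omega'$-walk steps back, never the reverse. Since the paper itself gives no proof --- the lemma is simply attributed to~\cite{HS12} --- your first option (record the weight inequality and cite~\cite{HS12}) matches the paper exactly and is fine.

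The one-paragraph ``induction'' you offer as an alternative is, however, not a valid substitute, because the key step --- that after the split the $\omega$-walk ``remains ahead until coalescence'' --- is not a classical fact here and does not follow from the clock inequality. After the split the two walks carry different crossing-count histories. Suppose the split occurs at vertex $j$ with crossing counts $(c,c')$. At the $\omega$-walk's next visit to $j$ (arriving from $j+1$) its backward clock sum $\sum_{i=0}^{c} Y(j,j^{-1},i)$ still has only $c+1$ terms, whereas when the $\omega'$-walk (having gone down to $j-1$ and returned) next visits $j$, that sum has accumulated at least two more terms. It is then entirely consistent with all the clock inequalities for the $\omega$-walk to step back at its visit to $j$ while the $\omega'$-walk, arriving later with a strictly larger backward clock sum, steps forward. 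So a state-by-state ``ahead is preserved'' ordering genuinely fails, and neither loop erasure nor a coalescence argument for coupled Markov chains rescues it: after the split the two processes are not Markov in position and do not run under a common transition rule. The pathwise domination of $\{S_\mu<S_\nu\}$ has to be extracted by the global combinatorial argument of~\cite{HS12}, which is exactly why the paper cites that reference rather than including a short proof; you should do the same and drop the sketched induction.
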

Above we have used the notation $S_\nu$ to denote the first non-zero hitting time of $\nu$ in the context of walks on paths.  In preparation for proving transience and recurrence properties of $\X$ we introduce the following hitting time notation.  
\begin{DEF}
For each vertex \Maaak{$\nu\in \mc{G}$}, define $T(\nu)=\inf\{n\ge 1:X_n=\nu\}$ to be the first (strictly positive) hitting time  of $\nu$ by the process $\X$. If $\nu$ is never hit after time $0$ then we set $T(\nu) =\infty$.  
\end{DEF}
\subsection{Proof of transience and recurrence}
\label{sec:green}
{Let us first clarify that the notions of recurrence and transience are well-defined.} 
\Maaak{
\begin{LEM}
Fix $\omega=\omega^*$ and $\mathcal{G}$.
\begin{itemize}
\item[(i)] If  $\bP^{\mc{G}}(T(\parent{\r}) <\infty)=1$ then $\bP^{\mc{G}}(\cap_{\nu \in \mc{G}}\{X_n =\nu \text{ infinitely often}\})=1$.
\item[(ii)] If $\bP^{\mc{G}}(T(\parent{\r}) <\infty)<1$ then $\bP^{\mc{G}}(\cup_{\nu \in \mc{G}}\{X_n =\nu \text{ infinitely often}\})=0$.
\end{itemize}
\end{LEM}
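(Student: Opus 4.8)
\textbf{Paragraph 1 (reduction to the number of returns to $\r^{-1}$).} The plan is to reduce both statements to a statement about $V:=\#\{n\ge0:X_n=\r^{-1}\}$, using a ``propagation'' principle. First I would show: if a vertex $\nu$ is visited infinitely often, then so is every neighbour $\mu$ of $\nu$. Indeed, by \eqref{transprob} the weights $W_n(\nu,\cdot)$ take values in $\{1\}\cup\{u_0,u_1\}$, so the one-step probability of moving from $\nu$ to $\mu$ is bounded below by $c_\nu:=\min(1,u_0,u_1)/\big(1+(\del(\nu)+1)\max(1,u_0,u_1)\big)>0$, uniformly in the past; by L\'evy's conditional Borel--Cantelli lemma, on $\{X_n=\nu \text{ i.o.}\}$ the walk steps from $\nu$ to $\mu$ infinitely often, hence $\mu$ is visited infinitely often. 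Iterating away from $\r^{-1}$ shows that if $\r^{-1}$ is visited i.o.\ then every vertex is; iterating towards $\r^{-1}$ shows that if some vertex is visited i.o.\ then so is $\r^{-1}$. Since each generation of $\Gcal$ is finite (local finiteness), ``no vertex visited i.o.'' forces the walk to leave every ball eventually, i.e.\ $|X_n|\to\infty$. Thus we get the a.s.\ identities
\[
\{\text{some }\nu\text{ visited i.o.}\}=\{\r^{-1}\text{ visited i.o.}\}=\{V=\infty\}=\{|X_n|\not\to\infty\},
\]
so (i) is equivalent to ``$\bP^{\mc{G}}(T(\r^{-1})<\infty)=1\Rightarrow \bP^{\mc{G}}(V=\infty)=1$'' (then propagate downward to all $\nu$), and (ii) to ``$\bP^{\mc{G}}(T(\r^{-1})<\infty)<1\Rightarrow\bP^{\mc{G}}(V=\infty)=0$''.

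\textbf{Paragraph 2 (strong Markov at the return times).} Let $T_0=0<T_1<T_2<\cdots$ be the successive visit times of $\X$ to $\r^{-1}$ (with $T_m=\infty$ if there are fewer than $m+1$ visits), and let $\omega_{T_m}:=\1_{\mc{E}_\varnothing(T_m)}\in\mc{C}$ be the (finite, coherent) configuration of crossed edges; note $\Gamma_{\omega_{T_m}}\supseteq\{(\r^{-1},\r)\}=\Gamma_{\omega^*}$, i.e.\ $\omega_{T_m}\ge\omega^*$ pointwise. By the strong Markov property, on $\{T_m<\infty\}$,
\[
\bP^{\mc{G}}\big(T_{m+1}<\infty\mid\mc{F}_{T_m}\big)=\bP^{\mc{G}}_{\omega_{T_m}}\big(T(\r^{-1})<\infty\big)=:g(\omega_{T_m}),
\]
hence $\bP^{\mc{G}}(T_{m+1}<\infty)=\bE^{\mc{G}}\big[\1_{\{T_m<\infty\}}\,g(\omega_{T_m})\big]$ and $\bP^{\mc{G}}(V=\infty)=\lim_m\bP^{\mc{G}}(T_m<\infty)$. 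Everything therefore comes down to understanding $g(\omega)=\bP^{\mc{G}}_\omega(T(\r^{-1})<\infty)$ for the finite configurations $\omega\ge\omega^*$ that the walk produces: to prove (i) one wants $g(\omega_{T_m})=1$ a.s., and to prove (ii) one wants $g(\omega_{T_m})$ to stay bounded away from $1$.

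\textbf{Paragraph 3 (controlling $g$).} I would split according to the sign of $u_1-u_0$. When $u_1\ge u_0$ (resp.\ $u_1\le u_0$), Definition \ref{DEF:2.5} together with a monotonicity in the environment in the spirit of Lemma \ref{lem:pathmono} (obtained from the clock-coupling of Section \ref{sec:coupling1}) gives that $\omega\ge\omega^*$ pointwise implies $g(\omega)\le g(\omega^*)$ (resp.\ $g(\omega)\ge g(\omega^*)$). This immediately settles (ii) when $u_1\ge u_0$ (then $\bP^{\mc{G}}(T_m<\infty)\le g(\omega^*)^m\to 0$) and (i) when $u_1\le u_0$ (then $g(\omega_{T_m})=1$ a.s., so inductively $\bP^{\mc{G}}(T_m<\infty)=1$ for all $m$, hence $\bP^{\mc{G}}(V=\infty)=1$). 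For the two remaining ``diagonal'' cases the monotonicity points the wrong way, and here I would exploit that beyond any finite visited region the tree is still fresh. Writing $\Gcal_\nu$ for the special subtree (Definition \ref{spect}) consisting of $\nu$, its descendants, and $\nu^{-1}$, and $h(\nu):=\bP^{\mc{G}}_{\omega^*,\nu}(T(\nu^{-1})<\infty)$ (a quantity depending only on $\Gcal_\nu$, since before reaching $\nu^{-1}$ the walk stays in the subtree below $\nu$, which is fresh), a first-step/first-excursion decomposition of $h(\nu)$ at $\nu$ shows that $h(\nu)=1$ forces $h(\nu_i)=1$ for every child $\nu_i$; since $h(\r)=g(\omega^*)$, inducting over generations gives $g(\omega^*)=1\Rightarrow h(\nu)=1$ for all $\nu$. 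Using this, together with the fact that whenever the walk first enters a vertex $v$ the subtree $\Gcal_v$ is fresh and the extension $\X^{\sss(\Gcal_v)}$ (which by Section \ref{sec:coupling1} depends only on the clocks inside $\Gcal_v$) then reaches $v^{-1}$ almost surely, one peels off the finite active region of $\omega$ to upgrade $g(\omega^*)=1$ to $g(\omega)=1$ for every finite $\omega$; this closes (i) when $u_1\ge u_0$. For (ii) when $u_1\le u_0$ the same fresh-beyond-a-finite-region input, now combined with a tail $0$--$1$ law (the event $\{|X_n|\to\infty\}$ is tail-measurable, so by L\'evy's $0$--$1$ law $\bP^{\mc{G}}_{\omega_{T_m}}(|X_n|\to\infty)\to\1_{\{|X_n|\to\infty\}}$ along the return times, and the uniform comparison of escape probabilities forces this limit to be deterministic), yields $\bP^{\mc{G}}(V=\infty)\in\{0,1\}$ and excludes $1$ when $g(\omega^*)<1$.

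\textbf{Paragraph 4 (the main obstacle).} The conditional Borel--Cantelli propagation, the event identities, and the two monotone cases are routine. The genuinely delicate point is controlling $g(\omega)=\bP^{\mc{G}}_\omega(T(\r^{-1})<\infty)$ uniformly over the \emph{ever-growing} finite environments produced by the walk: once-reinforcement makes the environment nondecreasing, but a single excursion into fresh territory can enlarge it unboundedly, so one cannot simply compare a fixed $\omega$ with $\omega^*$. The crux is to show that the property ``$g(\omega)=1$'' (equivalently, non-escape) is insensitive to the finite part of the environment; this is exactly where the once-reinforced structure and the independence properties of the extensions $\X^{\sss(\Gp)}$ must be used, and it is the only step that is not essentially bookkeeping.
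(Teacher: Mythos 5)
Your Paragraph~1 (propagation via the conditional Borel--Cantelli lemma, and the resulting identity of the events) is exactly the first observation in the paper's proof and is correct. After that the approaches diverge, and your version has two genuine gaps.

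\textbf{The monotonicity you invoke is not available.} In Paragraph~3 you assert that, for $u_1\ge u_0$, $\omega\ge\omega^*$ pointwise implies $g(\omega)\le g(\omega^*)$ (and the reverse when $u_1\le u_0$), citing Lemma~\ref{lem:pathmono}. But that lemma is stated, and its clock-coupling proof works, only for the extension walk on a single self-avoiding path $[\nu,\mu]$, where the ordering of crossing times is one-dimensional. For the full tree walk, once the two coupled processes choose different children (which happens as soon as an edge of $\Gamma_\omega\setminus\Gamma_{\omega^*}$ competes with a fresh edge at the same vertex), the coupling gives no ordering of the return times $T(\r^{-1})$. No tree version of this monotonicity is proved in the paper, and it is not clear it even holds; adding reinforced edges on a finite side-branch makes the walk dwell there without a clean comparison of exit behaviour. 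So the two ``easy'' cases are not actually settled.

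\textbf{The ``peeling'' argument is circular.} For the remaining cases you derive $h(\nu)=1$ for every $\nu$ (this part is fine), and then want $g(\omega)=1$ for finite $\omega$ by arguing that each time the walk enters a fresh vertex $v$ the extension $\X^{\sss(\Gcal_v)}$ returns to $v^{-1}$ a.s. But $h(v)=1$ only gives a \emph{single} return per fresh excursion. To conclude that the walk with configuration $\omega$ hits $\r^{-1}$, you need $\X^{\sss(\Gcal_v)}$ to return to $v^{-1}$ \emph{infinitely often}, which is exactly statement (i) for the subtree $\Gcal_v$ with $\omega^*$ — the thing being proved. You flag this in Paragraph~4, but the loop is not closed. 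The same issue infects your sketch of (ii) in the diagonal case: a L\'evy $0$--$1$ law argument would need the limit $\bP^{\mc{G}}_{\omega_{T_m}}(|X_n|\to\infty)$ to be deterministic, and that is not justified.

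\textbf{What the paper actually does is simpler and avoids both problems.} For (i), it uses a purely combinatorial observation: since $\r^{-1}$ has a unique neighbour $\r$, any reachable state $(\omega,\nu)$ of the walk (a finite coherent configuration together with the current position) can be realised by a finite nearest-neighbour path from $\r^{-1}$ that visits $\r^{-1}$ \emph{only} at time $0$ (step to $\r$, then a depth-first traversal of $\Gamma_\omega$, ending at $\nu$). If some reachable state had a positive chance of never again hitting $\r^{-1}$, prepending that positive-probability path would give $\bP^{\mc{G}}(T(\r^{-1})=\infty)>0$, a contradiction. No monotonicity or iteration over subtrees is required. For (ii), it uses the extensions directly: on the first visit to a vertex $\nu$, the subtree $\Gcal_\nu$ is fresh, the extension $\X^{\sss(\Gcal_\nu)}$ is a fresh walk on $\Gcal_\nu$ independent of $\mc{F}_{T(\nu)}$, and if it escapes so does $\X$; picking at each level a vertex $\nu_n$ whose fresh escape probability is at least $\vep$ yields infinitely many independent $\vep$-chances of escape, so almost surely the walk escapes after some level, contradicting $\r^{-1}$ being visited infinitely often. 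Replacing your Paragraph~3 with this pair of arguments would make the proof correct and essentially identical to the paper's.
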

\proof First, note that for every $\nu,\mu\in \mc{G}$, there exists an $\eta>0$ such that on every visit to $\nu$ there is probability at least $\eta$ of hitting $\mu$ before returning to $\nu$.  Thus, the events  $E_{\infty}:=\{\textrm{every vertex in $\mc{G}$ is visited infinitely often}\}$ and $S_{\infty}:=\{\textrm{some vertex in $\mc{G}$ is visited infinitely often}\}$ satisfy 
\begin{align}
\label{mm1}E_\infty\subset S_\infty, \textrm{ and } \bP^{\mc{G}}(S_\infty\setminus E_{\infty})=0.
\end{align}
Next, if the walk $\X$ comes back to $\r^{-1}$ almost surely then, since any finite sequence of nearest neighbour steps is realised with positive probability by the walk, we have that the  probability of returning infinitely often to $\r^{-1}$  is 1, and therefore the probability of visiting every vertex infinitely often is 1.   This proves (i).   

Now suppose that with probability $\vep>0$ the walk $\X$ does not return to $\r^{-1}$,   then at every level $n\ge1$ of $\mathcal{G}$, there exists a vertex $\nu_n$ from which $\X$ escapes to infinity (without ever visiting $\r^{-1}$ again) after the first visit to $\nu_n$ with probability at least $\vep$. Now, if $\X$ comes back to $\r^{-1}$ infinitely often, it reaches infinitely many of the $\nu_n$ and thus eventually escapes to infinity which contradicts the fact that $\X$ comes back to $\r^{-1}$ infinitely often. Therefore, it will almost surely return to $\r^{-1}$ only a finite number of times.  This proves (ii).
\hfill
\qed
}


\medskip

We continue to fix $\omega=\omega^*$, but now $\mc{G}$ is a Galton-Watson tree under the annealed measure $\P$, with (annealed) mean offspring $d$. \\
In order to prove Theorem \ref{thm:speed}(1) we need the following result.
\begin{PRP}\label{propsansnom}
If $\gamma:=\P(T(\parent{\r}) = \infty) >0$, we have that
\[
\P\left(\lim_{n\to\infty}|X_n|=\infty\;
\Big|\; G_\infty\right)=1,
\]
where $\P$ is the annealed law of the MAD walk on GW trees. 
\end{PRP}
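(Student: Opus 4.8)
The plan is to reduce the annealed conclusion to a quenched statement about a fixed tree and then propagate positivity of the escape probability through $\mc{G}$ using its self-similarity. Write $f(\mc{G}):=\bP^{\mc{G}}(T(\parent{\r})=\infty)$, so that $\gamma=\E_P[f(\mc{G})]$. By the Lemma preceding this proposition, if $f(\mc{G})>0$ then $\bP^{\mc{G}}$-a.s.\ no vertex of $\mc{G}$ is visited infinitely often; since $\mc{G}$ is locally finite this forces $|X_n|\to\infty$ (if $|X_n|\not\to\infty$ some level is visited infinitely often, hence by local finiteness either some vertex of that level or some vertex of the level below is, and iterating one reaches a vertex visited infinitely often, a contradiction). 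Thus it suffices to prove that $f(\mc{G})>0$ for $P$-a.e.\ $\mc{G}\in G_\infty$, for then integrating $\bP^{\mc{G}}(|X_n|\to\infty)=1$ over $\{f>0\}\cap G_\infty$ gives $\P(|X_n|\to\infty\mid G_\infty)=1$.

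Set $p:=P(f(\mc{G})=0)$. Since $f\ge0$ and $\gamma>0$ we have $p<1$. Also, on $G_\infty^c$ the tree is finite and the walk is recurrent, so $f(\mc{G})=0$ there and $P(G_\infty^c)\le p$. The crux is the following \emph{recursive lower bound}. Suppose $\del(\r)\ge1$, let $\r_1,\dots,\r_{\del(\r)}$ be the children of $\r$, and for $i\le\del(\r)$ let $\mc{G}^{(i)}$ be the subtree of $\mc{G}$ consisting of $\r$, $\r_i$ and all descendants of $\r_i$, rooted so that $\r$ is the added parent of $\r_i$; then $\mc{G}^{(i)}$ is distributed exactly as $\mc{G}$, and
\[
f(\mc{G})\ \ge\ \frac{u_0}{1+\del(\r)\,u_0}\,f\big(\mc{G}^{(i)}\big)\qquad\text{for every }i\le\del(\r).
\]
Indeed, the walk steps $\parent{\r}\to\r$ and then $\r\to\r_i$ with probability $u_0/(1+\del(\r)u_0)$; on the event that it subsequently never returns to $\r$ it remains in the subtree rooted at $\r_i$, where — by the extension construction of Section~\ref{sec:coupling1} together with the locality of the weights $W_n$ — it evolves exactly as a MAD walk with law $\bP^{\mc{G}^{(i)}}$ started from $\r_i$ with the configuration in which only the edge $[\r,\r_i]$ is marked, i.e.\ $\omega^*$ for $\mc{G}^{(i)}$. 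That sub-event has probability $f(\mc{G}^{(i)})$ and implies $T(\parent{\r})=\infty$. Taking contrapositives, $f(\mc{G})=0$ implies $f(\mc{G}^{(i)})=0$ for all $i\le\del(\r)$ (vacuously when $\del(\r)=0$).

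By the branching property, conditionally on $\del(\r)=k$ the trees $\mc{G}^{(1)},\dots,\mc{G}^{(k)}$ are i.i.d.\ copies of $\mc{G}$, so the previous line gives
\[
p\ \le\ P\big(f(\mc{G}^{(i)})=0\ \text{for all }i\le\del(\r)\big)\ =\ \E_P\big[p^{\del(\r)}\big]\ =\ g(p),
\]
where $g(s):=\E_P[s^{\del(\r)}]$ is the offspring generating function. Since $d=g'(1)>1$, the smallest fixed point of $g$ on $[0,1]$ is the extinction probability $q_{\mathrm{ext}}=P(G_\infty^c)<1$ and $g(s)<s$ for all $s\in(q_{\mathrm{ext}},1)$; hence $p\le g(p)$ together with $p<1$ forces $p\le q_{\mathrm{ext}}$. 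Combined with $P(G_\infty^c)\le p$ this gives $p=P(G_\infty^c)$, so $\{f(\mc{G})=0\}$ and $G_\infty^c$ coincide up to a $P$-null set. This is exactly the required claim, and the proof is complete.

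The only genuinely delicate point is expected to be the recursive lower bound: one must check that, conditionally on the first two steps and on the walk never returning to $\r$, the part of the trajectory inside the subtree below $\r_i$ indeed has the law of a MAD walk on $\mc{G}^{(i)}$ with the shifted initial configuration. This is precisely what the extension formalism of Section~\ref{sec:coupling1} provides, via Rubin's construction and the locality of the edge weights; the remaining ingredients — the reduction, the recurrence of walks on finite trees, and the generating-function fixed-point argument — are routine.
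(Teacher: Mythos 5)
Your proof is correct, but it takes a genuinely different route from the paper. The paper argues entirely at the annealed level: it looks at the record-level times $T^*_n$, uses the fresh-environment property of Galton--Watson trees to deduce that at each such time the walk has conditional probability $\gamma>0$ of never returning to the parent of the current record site, and then concludes via a conditional second Borel--Cantelli argument that almost surely one of these escape attempts succeeds, forcing $\r^{-1}$ to be visited only finitely often on $G_\infty$. You instead reduce the annealed conclusion to a quenched one: you show that the set $\{f(\mc{G})=0\}$ (where $f(\mc{G})=\bP^{\mc{G}}(T(\parent{\r})=\infty)$) coincides with $G_\infty^c$ up to $P$-null sets, by combining a recursive bound $f(\mc{G})\ge \frac{u_0}{1+\del(\r)u_0}f(\mc{G}^{(i)})$ --- justified by the locality of the MAD weights and a coupling of the walk to the walk on the subtree $\mc{G}^{(i)}$ up to the first return to $\r$ --- with the standard fixed-point analysis of the offspring generating function $g$. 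Both proofs ultimately exploit the branching self-similarity of Galton--Watson trees, but your argument is static and recursive whereas the paper's is dynamic (tracking the walk through record levels). Your version buys a slightly stronger conclusion, namely the \emph{quenched} fact that $\bP^{\mc{G}}(T(\parent{\r})=\infty)>0$ for $P$-a.e.\ $\mc{G}$ on $G_\infty$, and it does so with an arguably more elementary toolkit (no conditional Borel--Cantelli). The one place where you are slightly terse is the justification of the recursive lower bound: you appeal to the extension formalism, but what is really being used is a pathwise coupling of $\X$ after its first two steps with the walk $\bP^{\mc{G}^{(i)}}$, both started from $\r_i$ with only the edge to the parent marked; these coincide until the first return to $\r$, which is enough to identify the escape probabilities. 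Spelling that out would make the step airtight, but the idea as you have stated it is sound.
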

\begin{proof}
Let $H=\{\r^{-1} \textrm{ is visited infinitely often}\}$.  Due to \eqref{mm1}, it is sufficient to show that if $\gamma>0$ then $\P(H,G_\infty)=0$.  For this argument it is convenient to realise the process $\X$ under $\P$ as follows:  Whenever we reach a new (i.e.~previously unvisited) vertex $x\in \mc{G}$, we reveal the children $x_1,\dots, x_{\del(x)}$ of $x$.  Thus when we reach a site for the first time, its progeny has not yet been seen (and has the same Galton-Watson law as the progeny of $\r$).

Set $T^*_1 = 1$. 
Denote by $T^*_n$ the times when $\X$ reaches record levels, i.e.
$$ T^*_{n+1} \Def \inf\left\{k >T^*_n: |X_{T^*_{n+1}}| >|X_u|,\forall u <T^*_{n+1}\right\}.$$
Let $\mc{H}_n=\sigma(X_0,\dots, X_n)$.  On the event $T^*_n <\infty$, define the stopping time $t_n$ as the first time after $T^*_n$ when the process returns to the parent of  $X_{T^*_n}$.  Then 
\begin{equation}
\P(t_n=\infty|\mc{H}_{T^*_n})=\gamma, \textrm{ almost surely on }\{T_n^*<\infty\}.\label{mm2}
\end{equation}
Let $A^*=\cap_{n\in \N}\{T_n^*<\infty\} $ and $B^*=\cup_{n \in \N}\{t_n=\infty\}$.  Then, almost surely (i.e.~excepting sets of probability 0) $A^*=G_\infty${, as the walk cannot stay confined to a finite subtree of an infinite tree forever}.  Moreover, almost surely, $H^c\subset A^*=B^*=G_\infty$ (with the first equality following from \eqref{mm2}), and it follows that $\P(H,G_\infty)=0$.
\hfill
\end{proof} 

{Now, let us define a coloring scheme on the vertices of $\Gcal$ that will be useful in the proof of Theorem \ref{thm:speed}(1).}  Let $n^*$ be an integer (to be fixed later), so that only vertices lying at a level $kn^*$ for some $k \in \Z_+$ can be colored green. Firstly,  $\r$ is coloured green.   Consider a vertex $\mu$ at level $kn^*$ for $k \in \N$. Denote by $\nu$ its ancestor at level $(k-1)n^*$. Then $\mu$ is colored green if both of the following conditions hold: 
\begin{itemize}
\item $\nu$ is green, and 
\item the extension of $\X$ on $[\parent{\nu}, \mu]$ hits $\mu$ before it returns to  $\parent{\nu}$,
\end{itemize}
{where the extension of a walk is defined in Section \ref{sec:coupling1}.}
If $\nu$ is green, we call its {\it green offspring} the green vertices which are its descendants at level $|\nu| + n^*$. 
We denote $\Tcal^*$ the tree consisting of the green vertices.\\
The following result   plays a major role in our argument.
\begin{LEM}\label{extralem}
The tree $\Tcal^*$ is a {\it super-critical} Galton-Watson tree, as soon as $n^*$ is large enough.
\end{LEM}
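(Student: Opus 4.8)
The plan is to read off the Galton--Watson structure of $\Tcal^*$ from the independence of extensions established in Section~\ref{sec:coupling1} together with the branching property of $\mc{G}$, and then to get supercriticality by computing the expected number of green offspring of a green vertex.

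First I would argue that $\Tcal^*$ is a Galton--Watson tree. Fix $k\ge1$ and condition on $\mc{G}$ and on $\Tcal^*$ restricted to levels $\le(k-1)n^*$. For a green vertex $\nu$ at level $(k-1)n^*$, its number $Z_\nu$ of green offspring is, by the definition of the colouring, a deterministic function of the subtree of $\mc{G}$ rooted at $\nu$ truncated at depth $n^*$, together with the clocks $Y(\cdot,\cdot,\cdot)$ carried by the edges of the colouring paths $[\parent{\nu},\mu]$, as $\mu$ ranges over the descendants of $\nu$ at level $kn^*$ (all these edges lie in the subtree rooted at $\nu$, except $[\parent{\nu},\nu]$ itself). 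The crucial observation is that any two distinct colouring paths appearing in the whole construction --- whether for green vertices $\nu,\nu'$ at level $(k-1)n^*$ or for a green ancestor $\nu'$ of $\nu$ at an earlier level --- share at most one edge, and whenever they share an edge $[e_1,e_2]$, one of $e_1,e_2$ is a leaf of one of the two paths and the other is a leaf of the other; the only delicate case is the edge $[\parent{\nu},\nu]$, shared by the colouring path of $\nu$ and the colouring paths of the children of $\nu$, with $\nu$ a leaf of the former and $\parent{\nu}$ a leaf of the latter. Hence the independence criterion of Section~\ref{sec:coupling1} applies, so the extensions feeding the $Z_\nu$ for green $\nu$ at level $(k-1)n^*$ are jointly independent and independent of all extensions used to colour vertices at levels $\le(k-1)n^*$. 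Combined with the fact that in $\mc{G}$ the subtrees rooted at distinct vertices of a fixed level are i.i.d.\ and independent of the tree strictly above that level, this yields that the $Z_\nu$ are i.i.d.\ with a law depending on neither $k$ nor the conditioning --- which is exactly the defining property of a Galton--Watson tree, rooted at $\r$. (Note $Z_\nu$ need not be binomial, since the colourings of distinct descendants of $\nu$ use overlapping path segments below $\nu$; this is harmless.)

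Next I would check supercriticality by computing the offspring mean. The offspring law of $\Tcal^*$ is that of $Z_\r$, the number of descendants $\mu$ of $\r$ at level $n^*$ whose extension $\X[\parent{\r},\mu]$ hits $\mu$ before returning to $\parent{\r}$. By Corollary~\ref{cor:path}, each such $\mu$ is counted with probability $\psi_{n^*}$, independently of $\mc{G}$, so by linearity $\E[Z_\r]=d^{n^*}\psi_{n^*}$, using that the expected size of generation $n^*$ in $\mc{G}$ is $d^{n^*}$. Writing $\psi_{n^*}=\prod_{j=0}^{n^*-1}\eta_j$ with $\eta_j$ as in Corollary~\ref{cor:path}: if $u_1>1$ then $1-\eta_j$ decays geometrically, so $\psi_{n^*}$ stays bounded below by a positive constant and $d^{n^*}\psi_{n^*}\to\infty$; if $u_1=1$ then $\psi_{n^*}$ decays only polynomially (like $(n^*)^{-1/u_0}$), so again $d^{n^*}\psi_{n^*}\to\infty$ since $d>1$; and if $u_1<1$ then $\eta_j\to u_0/(u_0+1-u_1)$, whence
\begin{equation}
\frac{1}{n^*}\log\!\big(d^{n^*}\psi_{n^*}\big)\;\longrightarrow\;\log\frac{du_0}{1-u_1+u_0},
\end{equation}
which is strictly positive under the running hypothesis $du_0>1-u_1+u_0$ of Theorem~\ref{thm:speed}(1) (a hypothesis which is automatic when $u_1\ge1$). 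In every case $d^{n^*}\psi_{n^*}\to\infty$, so $\E[Z_\r]>1$ for $n^*$ large and $\Tcal^*$ is supercritical.

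The hard part will be the independence claim in the first step. One must, for each relevant pair of colouring paths, identify exactly which clocks the corresponding extensions actually consult --- the leaf property being what makes this work, since once an extension reaches the far end of its path the colouring is already decided, so it never touches the clocks governing the step back from that endpoint --- and then verify the shared-edge leaf condition of Section~\ref{sec:coupling1} in all configurations, most notably for the parent--child pair, where the colouring path of $\nu$ and those of its children overlap precisely in $[\parent{\nu},\nu]$. Once this is in place, the mean computation and the elementary asymptotics of $\psi_{n^*}$ are routine.
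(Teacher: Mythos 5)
Your proof is correct and takes essentially the same route as the paper's: compute the expected number of green offspring as $d^{n^*}\psi_{n^*}$ by linearity, establish the Galton--Watson branching property via the independence of extensions built in Section~\ref{sec:coupling1} together with the branching property of $\mc{G}$, and conclude supercriticality from $du_0>1-u_1+u_0$ via Corollary~\ref{cor:path}. You simply spell out in more detail the leaf-overlap criterion for independence (which the paper asserts with a citation to \cite{Co06,BA17}, relying on the fact that the Rubin clocks are indexed by \emph{ordered} pairs, so the shared boundary edge $[\parent{\nu},\nu]$ contributes disjoint clock collections to the two adjacent colouring stages) and the elementary asymptotics of $\psi_{n^*}$ in the three regimes of $u_1$, which the paper leaves implicit.
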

\begin{proof}
Under the annealed measure $\P$, each green vertex has an average number of green offspring equal to  $d^{n^*} \psi_{n^*}$.
In fact, each descendant of $\nu$ at level $|\nu| + n^*$ is green with probability $\psi_{n^*}$ (we are conditioning on  $\nu$ being green!).
By linearity of expectation, we get that the (annealed) expected number of green offspring is $d^{n^*} \psi_{n^*}$.

Under $\P$, the green vertices evolve as a  Galton--Watson tree $\Tcal^*$ [see \cite{Co06} and \cite{BA17}].
  The independence of green offspring of different green vertices is due to the fact that $\mc{G}$ is a Galton-Watson tree under $\P$, and that different offspring distributions are determined by disjoint collections of independent exponential random variables (see the introduction to Section \ref{sec:speed}).  Moreover, the offspring distribution is identically distributed for each green vertex, since $\omega=\omega^*$.\\
Finally, from Corollary  \ref{cor:path}, if $du_0>1-u_1+u_0$ then 
\begin{equation}
\label{green1}
d^{n^*}\psi_{n^*}>1,
\end{equation}
as soon as we choose $n^*$ large enough, which we assume from now on.
\hfill
\end{proof}

We are now ready to prove the first part of Theorem \ref{thm:speed}.
\begin{proof}[Proof the Theorem \ref{thm:speed}(1)]
Let us fix $n^*$ large enough so that, by Lemma \ref{extralem}, the Galton-Watson tree $\Tcal^*$ is super-critical. Next we reason by contradiction. Suppose that $\P(T(\parent{\r}) <\infty)=1$.
As the extensions on each path coincide with the process  observed on that path (if $\X$ ever hits that path), until it leaves the path for good, we have that  all the green vertices at level $n^*$ will be, by definition, visited before time $T(\parent{\r})$. The same is true for the green vertices at level $2n^*$. Recursively, we can conclude that all the green vertices are visited before time $T(\parent{\r})$.
As $\P(\Tcal^* \textrm{ is infinite})>0$, we have that $\P(T(\parent{\r}) = \infty) >0$, leading to a contradiction.  This shows that in fact $\P(T(\parent{\r}) <\infty)<1$, or equivalently $\P(T(\parent{\r}) =\infty)>0$.
Together with Proposition \ref{propsansnom}, this proves Theorem \ref{thm:speed}$(1)$.
\hfill
\end{proof}

We turn to the proof of the second part of Theorem \ref{thm:speed}.  
\begin{proof}[Proof of Theorem \ref{thm:speed}$(2)$]
By Corollary \ref{cor:path}, 
$ d^n \psi_n $ decreases exponentially fast to 0.  Fix $\mc{G}$ and $\nu\in\mc{G}$, and recall that
$\psi_n=\bP^{[\parent{\r}, \nu]}(S_\nu<S_{\parent{\r}})=\bP^{[\parent{\r}, \nu]}(\X[\parent{\r}, \nu]  \mbox{ hits } \nu \mbox{ before returning to } \parent{\r})$, and $|\nu|=n$.  
By  construction of the extension $\X[\parent{\r}, \nu]$, we have that if  $\X $   hits  $\nu$ before returning to $\parent{\r}$, then the same happens for  $\X[\parent{\r}, \n]$.
Thus,
$$
\bP^{\mc{G}}(T(\nu) < T(\parent{\r})) \le\bP^{[\parent{\r}, \nu]}(S_\nu<S_{\parent{\r}})=\psi_n.
$$ 
Let $R_{\r^{-1}}=\{X_0,\dots,X_{T(\r^{-1})-1}\}$ denote the range of the walk before returning to $\r^{-1}$.  Then 
\begin{align}
 \bE^{\mc{G}}[|R_{\r^{-1}}|]&=\sum_{\nu \in \mc{G}} \bP^{\mc{G}}(T(\nu) < T(\parent{\r}))\\
&=\sum_{n=0}^\infty \sum_{\nu\in \mc{G}:|\nu|=n}\bP^{\mc{G}}(T(\nu) < T(\parent{\r}))\\
&\le \sum_{n=0}^\infty \sum_{\nu\in \mc{G}:|\nu|=n} \psi_n.
\end{align}
Taking the expectation of both sides with respect to $\P$ we get
\[\E[|R_{\r^{-1}}|]\le \sum_{n=0}^\infty \psi_n \E\left[\Big|\{\nu\in \mc{G}:|\nu|=n\}\Big|\right]=\sum_{n=0}^\infty \psi_n d^n<\infty.\]
Thus, under $\P$, the expected number of vertices visited before returning to $\r^{-1}$ is finite, which implies that the number of vertices visited before returning to $\r^{-1}$ is $\P$-almost surely finite.   This proves the claim (2). 
\hfill
\end{proof}

\subsection{Positive speed when transient}\label{sec;posspeedtrans}

We want to prove the last statement of Theorem \ref{thm:speed}: on Galton-Watson trees without leaves, the speed of the MAD walk is positive whenever $du_0>1-u_1+u_0$.   {\it We henceforth assume this condition, as well as \eqref{no_leaves} and $u_1\ge u_0$, and therefore (by Theorem \ref{thm:speed}(1)) that the random walk is transient.}

We break the argument into two pieces. First, in Section \ref{speedoutward}, we will prove the result when the underlying bias on a single path is directed outward on the tree (i.e.~$u_1\ge 1$). Second, in Section \ref{speedroot}, we will prove the positivity of the speed in the case when the underlying bias on a single path is directed towards the root of the tree (i.e.~$u_1<1$). 

The proof in Section \ref{speedoutward} relies on the presence of geometrically growing deterministic subtrees of the Galton-Watson tree. We prove that, starting from any vertex $\nu$, the walker can quite quickly go and hit the root of such a subtree. Once there, the walker has a lower-bounded probability to escape to infinity. This will prove that the walker visits $\nu$ only a small number of times before escaping to infinity. Together with the existence of a regenerative structure and the fact that, on each generation of the Galton-Watson tree, the walker only visits a geometric number of vertices, this will prove the positivity of the speed. In particular  this provides a new smooth proof of the positivity of speed in the case of regular trees with parameters $u_1=1$ and $u_0<1$, already proved  in the seminal paper \cite{DKL02}: to obtain this result on regular trees, most of the steps of Section \ref{sec:supsubtree} and Section \ref{speedoutward} can be skipped as they are dealing with the randomness of the tree.

In Section \ref{speedroot}, we deal with the bias directed towards the root. Here, we use again a regenerative structure and the fact that we can work with the annealed measure conditioned on the event that the walker does not go back to the parent of the root.  We conclude noting that, when $u_1<1$, the walker cannot visit any vertex a large number of times without touching the root.

\subsubsection{Existence of the speed}
Here we use classical arguments to prove the existence of the speed, via the regeneration levels and times of the (a.s.~transient) walk $\X$. 
\begin{DEF} \label{defregene} 
Set $\tau_0=1$ (this is the first hitting time of $\r$).  For $m\in  \N$ define recursively,
$$\tau_m=\inf\left\{k> \tau_{m-1}: \sup_{j<k}|X_j|< |X_k|\le \inf_{j\ge k}|X_j|\right\}.$$
For each $m\in \Z_+$ let $\ell_m=|X_{\tau_m}|$.
\end{DEF}

Under the measure $\P$, the sequences $((\ell_k - \ell_{k-1},\tau_k- \tau_{k-1}))_{k\ge1}$ are independent and, except for the first one, distributed like $(\ell_1,\tau_1)$ under $\P\left(\cdot|T(\parent{\r}) = \infty\right)$.  Then, using arguments of Zerner (see Lemma 3.2.5 in \cite{Zeitouni}), we can prove the following result.
%
 \begin{PRP}[The speed exists]
\label{prp:cutl}  We have $\bbE[\ell_2 - \ell_1]= \left(\bbP\left(T(\parent{\r}) = \infty\right)\right)^{-1}<\infty$. Moreover, $|X_n|/n$ converges $\P$-a.s.~to a constant $v\in [0,1]$. 
 \end{PRP}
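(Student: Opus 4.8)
The plan is to prove Proposition \ref{prp:cutl} by a standard renewal argument adapted to this self-interacting setting. First I would establish that the regeneration times $\tau_m$ of Definition \ref{defregene} are $\P$-a.s.\ finite and that there are infinitely many of them. This uses transience (already established via Theorem \ref{thm:speed}(1)): since $|X_n|\to\infty$ a.s.\ on $G_\infty$ by Proposition \ref{propsansnom}, every level is a ``record'' level after a finite time, and at each record level there is a uniformly positive probability (bounded below by $\gamma=\P(T(\r^{-1})=\infty)>0$, by the argument in the proof of Proposition \ref{propsansnom}, specifically \eqref{mm2}) that the walk never backtracks below that level, so a Borel--Cantelli-type argument along record times produces infinitely many regeneration times.

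The second step is the decomposition into i.i.d.\ blocks. The crucial structural point, stated just before Proposition \ref{prp:cutl}, is that the increments $((\ell_k-\ell_{k-1},\tau_k-\tau_{k-1}))_{k\ge 2}$ are i.i.d.\ with the law of $(\ell_1,\tau_1)$ under $\P(\cdot\mid T(\r^{-1})=\infty)$; this is where the ``fresh'' Galton--Watson subtree beyond a regeneration level, together with the fact that after a regeneration the walk never again touches the edges below that level (so the relevant exponential clocks in the Rubin construction of Section \ref{sec:coupling1} are untouched), gives genuine independence despite the reinforcement. Granting this, $\bbE[\ell_2-\ell_1]$ equals the expected vertical displacement in one regeneration block. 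To compute it, observe that in a block the walk starts at a record level, and the number of record levels it reaches before the next regeneration is geometric: each time a new record level is reached, with probability $\gamma$ it never backtracks (and the block continues building records) and with probability $1-\gamma$ it does backtrack. Actually the cleanest route, following Lemma 3.2.5 of \cite{Zeitouni}, is to note that $\ell_1$ under $\P(\cdot\mid T(\r^{-1})=\infty)$ has the distribution of the first record level $L$ such that the walk, having reached $L$, never returns below it; decomposing over whether the very first new record level is already such a level gives $\ell_1 = 1 + \mathbf{1}\{\text{backtrack}\}\cdot(\text{independent copy of }\ell_1\text{'s increment structure})$, yielding $\bbE[\ell_2-\ell_1]=1/\gamma<\infty$. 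The finiteness is immediate since $\gamma>0$.

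The third step is the law of large numbers itself. With $\tau_m<\infty$ a.s.\ and the i.i.d.\ block structure, write $|X_n|$ sandwiched between $\ell_{k(n)}$ and $\ell_{k(n)+1}$ where $k(n)=\max\{m:\tau_m\le n\}$. Applying the strong law of large numbers to the partial sums $\ell_m=\sum_{j\le m}(\ell_j-\ell_{j-1})$ and to $\tau_m=\sum_{j\le m}(\tau_j-\tau_{j-1})$ gives $\ell_m/m\to \bbE[\ell_2-\ell_1]$ and $\tau_m/m\to \bbE[\tau_2-\tau_1]$ a.s.; combining, $\ell_m/\tau_m\to \bbE[\ell_2-\ell_1]/\bbE[\tau_2-\tau_1]=:v$ a.s., and the sandwich (plus $\tau_{k(n)}\le n<\tau_{k(n)+1}$ and $\tau_{m+1}/\tau_m\to1$) upgrades this to $|X_n|/n\to v$. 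Since steps are nearest-neighbour, $|X_n|\le n$, so $v\in[0,1]$. The only subtlety is that a priori $\bbE[\tau_2-\tau_1]$ could be infinite, in which case $v=0$ and the conclusion still holds (the liminf/limsup sandwich still forces the ratio to $0$); so we do not actually need $\bbE[\tau_2-\tau_1]<\infty$ for this statement.

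The main obstacle I anticipate is the rigorous justification of the i.i.d.\ block decomposition --- verifying that conditioning on a regeneration time genuinely ``refreshes'' both the tree and the driving randomness. This is where the \emph{extensions} framework and the product structure of the exponential clocks $\mathbf{Y}$ in Section \ref{sec:coupling1} do the work: one must check that the post-$\tau_m$ walk depends only on the subtree above level $\ell_m$ (revealed freshly as a Galton--Watson tree when first visited, per the exploration described in the proof of Proposition \ref{propsansnom}) and on clocks attached to edges not yet used, and that conditioning on $\{\tau_m<\infty\}$ and on the environment seen up to $\tau_m$ leaves these ingredients with their original joint law --- a measurability/strong-Markov bookkeeping argument rather than anything deep, but one that must be done carefully because reinforcement destroys the naive Markov property. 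Everything else is the classical Zerner--Sznitman renewal machinery.
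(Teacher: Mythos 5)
Your overall scaffolding matches the paper: establish that the regeneration times are a.s.\ finite, invoke the i.i.d.\ block structure stated just before the Proposition, prove $\bbE[\ell_2-\ell_1]=1/\bbP(T(\parent{\r})=\infty)$, and then use the sandwich $\ell_{k_n-1}/\tau_{k_n}\le |X_n|/n<\ell_{k_n}/\tau_{k_n-1}$ together with the strong law (and your observation that $\bbE[\tau_2-\tau_1]=\infty$ is harmless) to conclude $|X_n|/n\to v$. That last part is essentially verbatim the paper's argument.

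The one place where you depart from the paper --- and where your argument is actually wrong as written --- is the computation of $\bbE[\ell_2-\ell_1]$. You propose a self-similar recursion $\ell_1=1+\mathbf{1}\{\text{backtrack}\}\cdot(\text{independent copy of }\ell_1)$ under $\overline{\bbP}:=\bbP(\cdot\mid T(\parent{\r})=\infty)$, with backtrack probability $1-\gamma$. This fails because the conditioning distorts the per-level backtrack probability. Indeed, write $F_1=\{\text{after }T_1\text{, the walk never goes below level }1\}$; the fresh-environment argument gives $\bbP(F_1)=\gamma$ \emph{unconditionally}, but a direct computation shows $\overline{\bbP}(F_1)=\bbP(T_1=2)$ (the probability that the walk steps from $\r$ to a child rather than back to $\r^{-1}$), which is strictly larger than $\gamma$. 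So under $\overline{\bbP}$ the probability of not backtracking from level $1$ is \emph{not} $\gamma$, and the geometric recursion with parameter $\gamma$ does not close. If it did, you would get $\bbE[\ell_2-\ell_1]=1/\overline{\bbP}(F_1)=1/\bbP(T_1=2)$, which is the wrong answer.

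What the paper (and, indeed, Zeitouni's Lemma 3.2.5, which you cite) actually does is the Blackwell renewal-theorem argument, which sidesteps the conditioning issue entirely: one first shows, for every fixed level $i\ge1$, that $\bbP(\exists k:\ell_k=i)=\gamma$ --- this uses only the \emph{unconditional} fresh-environment property at the first-hitting time of level $i$, together with the no-leaves assumption (which guarantees every level is eventually reached and that the survival probability of the subtree ahead does not depend on $i$). One then views $(\ell_k)$ as a delayed renewal process, applies the renewal theorem to get $\lim_{i\to\infty}\bbP(\exists k\ge2:\ell_k-\ell_1=i-s)=1/\bbE[\ell_2-\ell_1]$, and sums over the law of $\ell_1$ by dominated convergence. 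Since the left side is identically $\gamma$, the identity follows with no need to analyze the conditioned backtrack probabilities. So: correct conclusion and correct reference, but the proof sketch for the key identity should be replaced by the renewal-theorem argument rather than the geometric decomposition.
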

\begin{proofsect}{Proof} For each fixed $i \in \N$,  we have 
$$ \bbP(\exists k \colon \ell_k = i) = \bbP\left(T(\parent{\r}) = \infty\right).$$
This last equality is not true in general for all Galton-Watson trees, as the probability that the tree survives up to generation $i$ depends on $i$. But here, as stated at the beginning of Section \ref{sec;posspeedtrans}, we consider Galton-Watson trees without leaves and thus we only use the fact that the walker sees a fresh environment in front of him every time it reaches a level $i$ for the first time.\\
On the other hand, 
$$
\begin{aligned}
\lim_{i \ti} \bbP(\exists k \colon \ell_k = i)  &= \lim_{i \ti} \bbP(\exists k \ge 2 \colon \ell_k = i) \\
&= \lim_{i \ti} \sum_{s=1}^\infty \bbP(\exists k \ge 2 \colon \ell_k - \ell_1 = i-s) \bbP(\ell_1=s).
\end{aligned}
$$

Using the renewal theorem, we have that 
$$ 
\lim_{i \ti} \bbP(\exists k \ge 2 \colon \ell_k - \ell_1 = i-s)  = \frac 1{\bbE[\ell_2 - \ell_1]}.
$$
The Dominated Convergence Theorem now verifies the first claim, since
$$ \bbP\left(T(\parent{\r}) = \infty\right) = \sum_{s=1}^\infty \bbP(\ell_1=s)\frac 1{\bbE[\ell_2 - \ell_1]} = \frac 1{\bbE[\ell_2 - \ell_1]}.
$$ 

For each $n$, there exists exactly one $k_n$ such that 
$\tau_{k_n-1} \le n < \tau_{k_n}$. Hence, by the definition of $(\ell_k)_{k \ge 1}$, we have 
\begin{equation}
 \frac{\ell_{k_n-1}}{\tau_{k_n}} \le \frac{|X_n|}n < \frac{\ell_{k_n}}{\tau_{k_n-1}}.\label{2sides}
\end{equation}

Note that $\ell_k=\sum_{i=1}^{k}(\ell_i-\ell_{i-1})$ and $\tau_k=\sum_{i=1}^{k}(\tau_i-\tau_{i-1})$. 
Hence, if $\bbE[\tau_2 - \tau_1]<\infty$, the law of large numbers applied to both sides of \eqref{2sides} shows that $|X_n|/n$ converges to $v=\E[\ell_2-\ell_1]/\E[\tau_2-\tau_1]$. If $\bbE[\tau_2 - \tau_1]=\infty$, then $\tau_k/k\to\infty$ a.s.~and the right-hand side of \eqref{2sides}, together with the first part of the proof, shows that $|X_n|/n\to0$.
\hfill \qed
\end{proofsect}

\subsubsection{A supercritical sub-tree} \label{sec:supsubtree}

To prove the positivity of the speed, we will use the fact that any Galton-Watson tree with no leaves almost surely contains a supercritical backbone with a regular growth (see Lemma~\ref{lemfrompem} below) . For this purpose, we need the following definition.

\begin{DEF}\label{def:Kdinf2}
Fix two integers $K\ge1$ and ${\bf d}>1$. A tree $\Gcal^*$ rooted at some vertex $\r$ is said to be $(K,{\bf d})$-infinite if there exists a subtree $\Gp$ rooted at $\r$ 
such that any vertex $\nu\in\Gp$, with $(|\nu|-|\r|)\in K\Z_+=\{Kn:n\in \Z_+\}$, has exactly ${\bf d}$ descendants at level $|\nu|+K$.
\end{DEF}

For the rest of this section, we fix two integers $K\ge1$ and ${\bf d}>1$. Let  $\Gcal^*$ be a $(K,{\bf d})$-infinite tree with root $\r$ and denote by $\Gcal' =(V', E')$ a  subtree of $\Gcal^*$ rooted at $\r$  with no leaves, and where each vertex $\nu\in \Gp$, $(|\nu|-|\r|)\in K\Z_+$, has {\it exactly} ${\bf d}$ descendants at level $|\nu|+K$.\\
Moreover, we  augment $\Gcal'$ by adding a parent $\r^{-1}$ to $\r$.
Define the Green branching process $\mc{T}:=\mc{T}(n^*)$ on $\mc{G}'$ as in Section \ref{sec:green}, with $n^*=k^*K$ being a (large enough) multiple of $K$ (under the assumption that ${\bf d}^n\psi_{nK}>1$ for all $n$ sufficiently large). Notice that $\mc{T}$ is {\it not} a homogeneous Galton--Watson tree due to the possibly irregular structure of $\Gcal'$. \\

\begin{DEF}  \label{def:Kdinf}
Let 
$ \beta_G(n^*,\omega)$ denote the survival probability of the green branching process $\Tcal$, with $n^*=k^*K$, i.e.~
$$\beta_G(n^*,\omega)=\bP^{\Gcal'}_\omega(|\Tcal(n^*)|=\infty).$$
\end{DEF}
We will  prove next that $\mc{T}$ is infinite with positive probability. We prove this when the initial configuration is $\omega^*$, using  large deviations arguments.
\begin{LEM} \label{lemspb}
If ${\bf d}^n\psi_{nK}>1$ for all $n$ sufficiently large then there exists $k^*$ such that $\beta_G(n^*,\omega^*) > 0$.
\end{LEM}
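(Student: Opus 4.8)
The plan is to show that the green branching process $\Tcal=\Tcal(n^*)$ on the deterministic tree $\Gp$, although not a homogeneous Galton--Watson tree, is nevertheless a branching process with \emph{independent} offspring whose laws --- which may vary from vertex to vertex because of the irregular fine structure of $\Gp$ --- all have the \emph{same mean}, which exceeds $1$ once $k^*$ is large; survival with positive probability then follows from an $L^2$-martingale argument.

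Since $\Gp$ is $(K,{\bf d})$-infinite and $n^*=k^*K$, every green vertex $\nu$ at level $jn^*$ has exactly $N:={\bf d}^{k^*}$ descendants $\mu_1,\dots,\mu_N$ in $\Gp$ at level $(j+1)n^*$, and conditionally on $\nu$ being green each $\mu_i$ is green with probability exactly $q:=\psi_{n^*}$ (exactly as in the proof of Lemma~\ref{extralem}): by Corollary~\ref{cor:path} the extension $\X[\parent\nu,\mu_i]$ hits $\mu_i$ before $\parent\nu$ with probability depending only on the distance $n^*$ and on $\omega^*$ (trivial away from the root), and this is unaffected by the event $\{\nu\ \mathrm{green}\}$, which is measurable with respect to the extension on $[\parent{\tilde\nu},\nu]$ ($\tilde\nu$ the ancestor of $\nu$ at level $(j-1)n^*$) --- a walk driven by a family of clocks $Y(\cdot,\cdot,\cdot)$ disjoint from those driving $\X[\parent\nu,\mu_i]$, by the leaf-overlap independence established in Section~\ref{sec:coupling1}. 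The same decoupling gives that, conditionally on the green tree built up to a given level, the offspring counts $Z_\nu$ of the distinct green vertices $\nu$ at that level are independent, each $Z_\nu$ keeping its unconditional law. Hence $\Tcal$ is a branching process with independent offspring in which every vertex has offspring mean $m:={\bf d}^{k^*}\psi_{k^*K}$; by the standing hypothesis $m>1$ as soon as $k^*$ is large, and we fix such a $k^*$ from now on. (The role of ``large deviations'' here is only through Corollary~\ref{cor:path}: $\psi_n$ decays at a definite exponential rate, and ${\bf d}^n\psi_{nK}>1$ is exactly the statement that ${\bf d}^{k^*}$ beats the exponentially small path-crossing probability $\psi_{k^*K}$.)

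Now let $Z_n$ be the number of green vertices at level $nn^*$ and $W_n:=Z_n/m^n$, so $W_0=1$. Since every offspring law has mean $m$ and is supported on $\{0,1,\dots,N\}$ (so has variance at most $N^2$), $(W_n)$ is a non-negative martingale and, using $m>1$, one gets $\sup_n\E[W_n^2]\le 1+N^2/(m(m-1))<\infty$. Hence $(W_n)$ is bounded in $L^2$, so $W_n\to W_\infty$ in $L^1$ with $\E[W_\infty]=1$, whence $\P^{\Gp}_{\omega^*}(W_\infty>0)>0$; on $\{W_\infty>0\}$ we have $Z_n=W_n m^n\to\infty$, so $\Tcal$ is infinite, and therefore $\beta_G(n^*,\omega^*)=\P^{\Gp}_{\omega^*}(|\Tcal|=\infty)\ge\P^{\Gp}_{\omega^*}(W_\infty>0)>0$. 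The one genuinely delicate point --- the main obstacle --- is the independence assertion: one must verify carefully that ``$\mu_i$ is green'' depends only on the clocks attached to the edges of the path $[\parent\nu,\mu_i]$, and that for a green child $\nu'$ of $\nu$ the clocks driving the extensions above $\nu'$ are disjoint from those already used to decide that $\nu'$ is green (the walks share only the edge $[\parent{\nu'},\nu']$, traversed in opposite orientations), which is precisely the leaf-overlap criterion of Section~\ref{sec:coupling1}. Notably, no fine or adversary-proof control of the correlations among $\mu_1,\dots,\mu_N$ is required: only the mean $m>1$ and the crude bound $\mathrm{Var}(Z_\nu)\le N^2$ enter the martingale estimate.
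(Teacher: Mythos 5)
Your argument is correct, and it takes a genuinely different route from the paper's. You exploit the observation that, although the offspring distributions of the green branching process vary from vertex to vertex (because the fine structure of $\Gp$ between consecutive coloured levels is not homogeneous), they all have the \emph{same} mean $m={\bf d}^{k^*}\psi_{k^*K}$ (each green vertex has exactly $N={\bf d}^{k^*}$ candidate descendants, each independently green with probability $\psi_{n^*}$ as far as the marginal is concerned) and are supported on $\{0,\dots,N\}$; this lets you run the classical $L^2$-martingale (Kesten--Stigum--type) argument on $W_n=Z_n/m^n$, getting $\sup_n\E[W_n^2]\le 1+N^2/(m(m-1))$, hence $\E[W_\infty]=1$ and positive survival probability. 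The paper instead fixes $r\in(1,r^*)$, sets $D_n=\{|Z_n|\ge r^n\}$, and shows via Azuma--Hoeffding that $\bP^{\mc{G}'}\bigl(D_n\,\big|\,\cap_{k<n}D_k\bigr)\ge 1-c\,e^{-\gamma\lfloor r^n\rfloor}$, so that $\bP^{\mc{G}'}\bigl(\cap_n D_n\bigr)>0$. Both proofs rest on the same two key structural facts, which you identify correctly: the offspring of distinct green vertices are independent (by the leaf-overlap disjointness of the clocks from Section~\ref{sec:coupling1}), and the common offspring mean exceeds one. Your martingale route is more compact: it avoids the large-deviation bound and the explicit bookkeeping of the finitely many offspring types $\xi_1,\dots,\xi_M$, using only the mean and the crude uniform variance bound. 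The paper's concentration argument is slightly more flexible --- it would still go through if the per-vertex means were merely uniformly bounded below by some $r^*>1$ rather than being exactly equal --- but in the present situation equality of means holds for free, so that extra generality is not needed.
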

\begin{proof}
Let $k^*$ be such that $r^*:={\bf d}^{k^*}\psi_{k^*K}>1$, and let $n^*=k^*K$.  Suppose that $\nu$ is green. The distribution of green descendants of  $\nu$ at level  $|\nu| + n^*$  is determined by the finite subtree consisting of  $\nu$, its parent and its descendants up to level $|\nu| + n^*$.  
We say that $\nu$ is the root of this $n^*$ subtree.  Note that there are only $M<\infty$ (i.e.~finitely many) distinct trees without leaves that have exactly ${\bf d}^{k^*}$ descendants at level $n^*$, so we can label them as $1, 2, \ldots, M$.   Hence, the process describing the green vertices in this context evolves as a peculiar non-homogeneous branching process with at most $M$ different offspring distributions $\xi_1, \xi_2, \ldots, \xi_M$, each of which: has mean $r^*>1$ (so is supercritical) and is bounded by $L:={\bf d}^{k^*}$.
We were not able to find a precise statement in the literature that would imply the supercriticality of this branching process, so we include a proof.


Denote by $Z_n$ the set of vertices at level $n$ which are colored green. Hence $Z_n$ is a random subset of level $n$, possibly empty. Fix $r  \in (1,  r^*)$.
Set $D_n = \{|Z_n| \ge r ^n\}$.
Next we prove that  there exist  $c,\gamma>0$, depending only upon the choice of $r $, such that
\begin{equation}\label{eq:branch}
\bP^{\mc{G}'}\left(D_n \;\Big|\; \bigcap_{k=1}^{n-1} D_k\right) \ge 1 - c{\rm e}^{- \gamma \floor{r ^n}},
\end{equation}
with the left hand side always being strictly positive.  The latter statement is trivial.  We  will prove the former via Azuma-Hoeffding inequality.

Fix a sequence $\bl:= (\lambda_k)_{k \in \N}$, with each $\lambda_k\in \{\xi_1, \xi_2, \ldots, \xi_M\}$, and consider a sequence of independent random variables $(U_k)_{k\in \N}$, with marginal laws $(\lambda_k)_{k \in \N}$.  Let $\bP_{\bl}$ denote the joint law of the independent sequence $(U_k)_{k\in \N}$.
We claim that there exist $\gamma',c>0$ such that for some ${\bl}$ (depending on $Z_{n-1}$), almost surely
\begin{equation}\label{eq:inb}
\bP^{\mc{G}'}\left(D_n \;\Big|\;\bigcap_{k=1}^{n-1} D_k, Z_{n-1}\right) \ge \bP_{\bl}\Big(\sum_{k=1}^{\floor{r ^{n-1}}} U_k >   r ^{n}\Big)\ge 1-c{\rm e}^{-\gamma' \floor{r ^{n-1}}}.
\end{equation}
Since the right hand side does not depend on $Z_{n-1}$ (or $\bl$) this estimate proves \eqref{eq:branch}.

The first inequality of \eqref{eq:inb} comes from the fact that on $D_{n-1}$ we have that there are at least $\floor{r ^{n-1}}$ green vertices, and $Z_{n-1}$ provides the information on their location, which gives us information of the \lq type\rq\/ of each offspring distribution (in particular this tells us $\lambda_1,\dots, \lambda_{\floor{r ^{n-1}}}$). Hence, in order to understand the size of the next generation, by reducing the number of green vertices at level $n-1$ to exactly $\floor{r ^{n-1}}$ we stochastically reduce the growth, proving the first inequality of \eqref{eq:inb}.

For the second inequality of \eqref{eq:inb}, using Azuma-Hoeffding inequality on the centred {and bounded} variables (for $n$ sufficiently large),
\begin{align}
\bP_{\bl}\Big(\sum_{k=1}^{\floor{r ^{n-1}}} U_k \le   r ^{n}\Big)&=\bP_{\bl}\Big(\sum_{k=1}^{\floor{r ^{n-1}}} (U_k-r^*) \le   r ^{n}-\floor{r ^{n-1}}r^*\Big)\\
&\le \exp\left\{-\frac{(r ^{n}-\floor{r ^{n-1}}r^*)^2}{2\floor{r ^{n-1}}L^2}\right\}.
\end{align}
The argument of the exponential is 
\begin{align}
-\frac{\floor{r ^{n-1}}}{2L^2}\left(r \frac{r^{n-1}}{\floor{r ^{n-1}}}-r^*\right)^2.
\end{align}
For $\vep\in (0,r^*-r)$ and all $n$ sufficiently large this is at most 
$$
-\frac{\floor{r ^{n-1}}}{2L^2}(r(1+\vep/2)-r^*)^2,
$$
which proves the second inequality of \eqref{eq:inb} for sufficiently large $n$, and the constant $c$ makes the statement true for all $n$.

Finally, notice that the event that there are infinitely many green vertices contains the event  $\bigcap_{i=1}^\infty D_i$, and we have 
 $$ \bP^{\mc{G}'}\Big(\bigcap_{n=1}^\infty D_n\Big)= \prod_{\ell=1}^\infty \bP^{\mc{G}'}(D_\ell|\cap_{i=1}^{\ell-1}D_i)>0,$$
 proving the lemma.
\hfill
\end{proof}
For the remaining part of this subsection, we  consider the behaviour of a MAD walk $\bX$ defined on $\Gcal'$, for arbitrary configuration $\omega\in \Ccal(\Gcal')$, starting at  $X_0=\parent{\r}$.  
Again we can define the Green branching process $\mc{T}(n^*,\omega)$ in exactly the same way, with $n^*=kK$ being a (large enough) multiple of $K$. Because $\omega$ differs from $\omega^*$ on finitely many edges, the probability that $\mc{T}(n^*,\omega)$ survives forever is also positive.
A MAD $\X$ on $\mc{G}'$ satisfying ${\bf d}^n\psi_{nK}>1$ for $n$ sufficiently large is said to be {\it supercritical}.

\begin{DEF} Consider a MAD walk on $\Gcal'$.  For any  $\omega \in \Ccal(\Gcal')$ define 
$$ \beta(\omega) \Def \bP^{\Gcal'}_{\omega}\left(T(\parent{\r}) = \infty\right).$$
\end{DEF}


For each $\omega \in \Ccal(\Gcal')$, since $\{|\mc{T}(n^*)|=\infty\}\subset \{T(\r^{-1})=\infty\}$ on our probability space, it follows immediately that
\begin{equation}\label{betain}
  \beta(\omega) \ge \beta_G(n^*,\omega) > 0.
  \end{equation}

\begin{LEM}\label{betaa} Suppose that $\omega_1 \ge \omega_2$ (recall Definition~\ref{DEF:2.5}).  Then 
\[ \beta(\omega_1) \ge \beta_G(n^*,\omega_1)  \ge \beta_G(n^*,\omega_{2}) > 0.\]
\end{LEM}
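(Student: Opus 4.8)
The chain of inequalities in Lemma~\ref{betaa} splits into three pieces, two of which are immediate. The leftmost inequality $\beta(\omega_1)\ge\beta_G(n^*,\omega_1)$ is precisely the containment $\{|\Tcal(n^*)|=\infty\}\subseteq\{T(\parent{\r})=\infty\}$ recorded in \eqref{betain}, and the strict positivity $\beta_G(n^*,\omega_2)>0$ is Lemma~\ref{lemspb} combined with the observation (immediately following that lemma) that perturbing $\omega^*$ on finitely many edges — which is all that any $\omega\in\Ccal(\Gcal')$ does — keeps the green survival probability positive. So the plan is really to establish only the middle inequality $\beta_G(n^*,\omega_1)\ge\beta_G(n^*,\omega_2)$, and I would do this by a monotone coupling on the single probability space of Section~\ref{sec:coupling1}.

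On that space the clock family ${\bf Y}$ is fixed, and for each configuration $\omega$ every path-extension $\X[\parent{\nu},\mu]$ on the fixed tree $\Gcal'$ is built from the clocks attached to the edges of $[\parent{\nu},\mu]$ together with the weights \eqref{wj1}--\eqref{wj3}, the only $\omega$-dependence being through the initial weights $w_0(\nu,\nu_i)$ in \eqref{wj2}. I would then show, by induction on the level $kn^*$, that on this space every vertex green in $\omega_2$ is also green in $\omega_1$. The root $\r$ is green in every configuration. For the inductive step, if $\mu$ at level $kn^*$ is green in $\omega_2$, then its ancestor $\nu$ at level $(k-1)n^*$ is green in $\omega_2$, hence green in $\omega_1$ by the inductive hypothesis, while the defining event $\{S_\mu<S_{\parent{\nu}}\}$ for the extension $\X[\parent{\nu},\mu]$ holds in $\omega_2$, hence in $\omega_1$ by Lemma~\ref{lem:pathmono} applied with $\parent{\nu}<\mu$ and $\omega_1\ge\omega_2$ (Definition~\ref{DEF:2.5}); so $\mu$ is green in $\omega_1$. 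This gives $\Tcal(n^*,\omega_2)\subseteq\Tcal(n^*,\omega_1)$ pointwise on the coupling space, hence $\{|\Tcal(n^*,\omega_2)|=\infty\}\subseteq\{|\Tcal(n^*,\omega_1)|=\infty\}$, and taking $\P$-probabilities yields $\beta_G(n^*,\omega_1)\ge\beta_G(n^*,\omega_2)$.

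The part I expect to require the most care is making this induction genuinely watertight: one must check that the extension $\X[\parent{\nu},\mu]$ entering the definition of greenness is literally the same object (the same sub-collection of clocks, the same $j\ge1$ weights) in $\omega_1$ and $\omega_2$, so that Lemma~\ref{lem:pathmono} — which is stated on exactly this probability space and already absorbs the $\omega$-dependent initial weights — can be applied edge by edge up the green tree with no independence or consistency issue. None of this is conceptually deep; it is purely a matter of keeping the bookkeeping of clocks and paths straight. Once the pathwise nesting of the green trees is in place the conclusion is immediate, and in particular no new estimate beyond Lemma~\ref{lemspb} is needed.
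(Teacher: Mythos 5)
Your proposal is correct and follows essentially the same route as the paper: the outer inequalities are handled exactly as the paper does (via \eqref{betain} and the supercriticality established by Lemma~\ref{lemspb}), and the middle inequality is the paper's one-line appeal to Lemma~\ref{lem:pathmono}, which you have simply unpacked into the pathwise induction on green levels that the paper leaves implicit. No gaps; the extra detail about tracking the clocks and the consistency of the extensions is just a careful articulation of what the paper calls ``immediate.''
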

\begin{proof} The first inequality is exactly \eqref{betain}  for $\omega_1$. The middle inequality 
is immediate from Lemma \ref{lem:pathmono}.  The last inequality  is trivial from the supercriticality of $\Tcal$.  
\hfill
\end{proof}


\subsubsection{Positivity of the speed for outward bias: the case $u_1 \ge1$} \label{speedoutward}

Let $T_n=\inf\{k\ge 0:|X_k|=n\}$ denote the hitting time of level $n$.  Our main tool in the proof of positivity of the speed when $u_1\ge 1$ is  the following proposition.

\begin{PRP}\label{mainth}  Under the condition that $u_1\ge1$, there exists $C\in (0,\infty)$ such that 
\begin{equation}\label{trel} 
\bbE \left[\frac{T_n}{n}\right] \le C <\infty.
 \end{equation}
\end{PRP}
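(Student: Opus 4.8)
The plan is to bound $T_n$ by decomposing it into the times spent between successive record levels and showing each such block has expectation bounded uniformly. The key point is that the walk is transient with $u_1\ge 1$, so the underlying bias on every path is outward, and we can exploit the deterministic $(K,{\bf d})$-infinite subtree structure to give the walker a genuine escape mechanism from any vertex. Concretely, fix $K$ and ${\bf d}$ so that $\mc{G}'$ (a $(K,{\bf d})$-infinite subtree with no leaves) exists inside $\mc{G}$ almost surely (via the Lemma referred to as \ref{lemfrompem}), and restrict attention to the walk on $\mc{G}'$, which can only slow the walker down. From a vertex $\nu$, the walker can reach the root of a deterministic geometric subtree hanging below it within a bounded number of steps with positive probability, and once there escapes to infinity (never returning to $\nu$) with probability at least $\beta_G(n^*,\omega)>0$ by Lemma \ref{betaa}, uniformly over the finite configurations $\omega$ the walker can see. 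Hence the number of visits to any fixed vertex before $T_n$ (or forever) is stochastically dominated by a geometric random variable with a parameter bounded away from zero.

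The main steps I would carry out are: (i) Let $R_n$ denote the set of vertices at levels $\le n$ visited before time $T_n$; then $T_n = \sum_{\nu\in R_n}(\text{number of visits to }\nu\text{ before }T_n)$, so $\bbE[T_n]\le \sum_{\nu:|\nu|\le n}\bbP(\nu\in R_n)\,\bbE[\#\text{visits to }\nu\mid \nu\in R_n]$. (ii) Using the escape argument above together with the uniform lower bound on $\beta_G$, show $\bbE[\#\text{visits to }\nu\mid \nu\in R_n]\le C_1$ for a constant $C_1$ independent of $\nu$ and $n$; this uses that $u_1\ge1$ so that the path-walk toward the escape root has outward (or at worst null) bias and hence hits it in $O(1)$ expected steps with positive probability, and it uses the attractiveness assumption $u_1\ge u_0$ (already standing in this subsection) so that reinforcement from past visits only helps escape. (iii) Show $\sum_{\nu:|\nu|\le n}\bbP(\nu\in R_n)\le C_2 n$: this is the content of a result like Theorem \ref{thm:speed}(2)'s computation run in the transient direction — in fact one shows $\bbE[\#\{\nu\in R_n:|\nu|=k\}]$ is bounded uniformly in $k\le n$, since on a fixed level the expected number of distinct visited vertices below a transient walk is $O(1)$ by comparing to the path extensions and using the regeneration structure of Definition \ref{defregene}, so summing over $k=0,\dots,n$ gives $O(n)$. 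Combining (i)--(iii) yields $\bbE[T_n/n]\le C_1 C_2=:C<\infty$.

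The hard part will be step (iii): controlling the expected number of \emph{distinct} vertices on a given level visited before the walker passes that level, uniformly in the level. One cannot directly reuse the Green-tree argument since that was tuned to $\omega^*$ and to proving transience, not to counting; instead I expect to need the regeneration decomposition (Proposition \ref{prp:cutl}) to reduce to a single regeneration block, and within a block bound the range on each level by a comparison with the walks $\X[\nu,\mu]$ on paths (Corollary \ref{cor:path}), using that $\psi_m$ decays or is bounded appropriately and that the branching of the tree is offset by the transience of the path-walk. A subtle point to handle carefully is that the walker sees different configurations $\omega$ on different excursions, so all the escape-probability and hitting-time bounds must be stated uniformly over $\omega\in\Ccal(\mc{G}')$, which is exactly why Lemma \ref{betaa} was proved in that generality; I would lean on monotonicity in $\omega$ (Lemma \ref{lem:pathmono}) to reduce every such bound to the worst-case configuration.
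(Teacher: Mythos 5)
Your decomposition is essentially the paper's: the paper writes
\[
\bbE[T_n]\le\sum_{j=0}^{n-1}\sum_{\nu:|\nu|=j}\bbE\bigl[L'(\nu,\infty)\bigr]\,\bbP(T(\nu)<\infty)\le C'\sum_{j=0}^{n-1}\bbE[\xi_j]\le \frac{C'}{\beta^*}\,n,
\]
using Lemma~\ref{LProp} to pass from the walk's actual visit count $L(\nu,\infty)$ to the extension $L'(\nu,\infty)$, whose expectation given $\mc{G}'_\nu$ is independent of $\1_{T(\nu)<\infty}$, then Lemma~\ref{loct} for the uniform constant $C'$, and finally \eqref{moment} for $\bbE[\xi_j]\le1/\beta^*$. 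So your steps (i)--(iii) track the paper's argument.

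However, you have inverted where the real work lies, and that inversion would lead you to waste effort and to leave the genuinely delicate point underdeveloped. Your step (iii), which you call ``the hard part,'' is in the paper a one-line geometric domination: each time the walker first reaches a vertex at level $j$, it escapes forever with probability $\beta^*=\P(T(\r^{-1})=\infty)$ independently of the past, so $\xi_j$ is dominated by a Geometric$(\beta^*)$ and $\bbE[\xi_j]\le1/\beta^*$ uniformly in $j$; no regeneration structure, no path extensions, no per-level counting before passing the level is needed (one simply bounds by the total number ever visited). The actual difficulty is your step (ii). You claim the walker ``hits the escape root in $O(1)$ expected steps with positive probability,'' but that root sits at a random distance $N$ from $\nu$ (Lemma~\ref{Pem-ref}), so the escape probability from $\nu$ degrades like $\psi_N$; you get $\bbE[L(\nu,\infty)\mid N=n]\lesssim\psi_n^{-1}$, which is polynomially growing in $n$ when $u_1=1$, and this must be beaten by the geometric tail of $N$ (see the proof of Lemma~\ref{loct}). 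Your conditional formulation $\bbE[\#\text{visits}\mid\nu\in R_n]$ also hides a real issue: conditioning on visitation alters the joint law of the walk history and the revealed tree, and the clean way to handle it is precisely the extension-walk device of Lemma~\ref{LProp}, replacing the actual visit count by $L'(\nu,\infty)$ which is a function of $\mc{G}'_\nu$ alone and hence independent of $\1_{T(\nu)<\infty}$. You gesture at this by invoking monotonicity in $\omega$, which is indeed part of the solution, but you should make the extension/independence step explicit rather than working directly with the conditional expectation.
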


%

Denote by $\xi_k$ the number of vertices visited by the process $\X$ at level $k$, i.e. 
$$ \xi_k = |\mbox{Range}(\X) \cap \{ \nu \in V \colon |\nu| =k\}|,$$
where 
$$ \mbox{Range}(\X) = \{X_k \colon k \in \Z_+\}.$$
Let $\beta^*=\P(T(\r^{-1})=\infty)$.  Each time a vertex at level $k$ is visited for the first time, with probability $\beta^*$ (independent of the past) the process never returns to level $k-1$. Hence (under $\P$) 
the random variable $\xi_k$ is stochastically dominated by a geometric random variable $\widetilde{\xi}$ with average 
$1/\beta^*$.  It follows that for all $k \in \N$ we have 
\begin{equation}\label{moment}
\bbE[\xi_k] \le  \frac{1}{\beta^*}.
\end{equation}

Fix $\mc{G}$.  For $\nu\in \mc{G}$, let  
$$ L(\nu, \infty) \Def |\{ j \ge 0 \colon X_j = \nu\}|$$ denote
the number of visits  to $\nu$ by $\X$. 
Denote by $\Gp_{\nu}=(V'_{\nu},E'_{\nu})$ the subtree of $\Gcal$ whose vertex set consists of $\parent{\nu}$, $\nu$, and all the descendants of $\nu$.  Define
$$  L'(\nu, \infty) \Def |\{ j \ge 0 \colon X^{\ssup {\Gp_{\nu}}}_j = \nu\}|,$$
i.e. the time spent by the extension of $\X$ to $\Gp_{\nu}$ in $\nu$. Notice that (see Definition~\ref{spect})
$\Gp_{\nu}$ is a special tree with $\r' = \parent{\nu}$.

For $\nu\in \mc{M}\setminus \mc{G}$ we set $L(\nu,\infty)=L'(\nu,\infty)=0$.

\begin{LEM}\label{LProp} Let $\nu\in \mc{M}$.
\begin{enumerate} 
\item[$1)$] Under $\P$, the random variable $\E[L(\r, \infty)|\mc{G}]$ has the same distribution as: $\E[L'(\nu, \infty)|\mc{G}'_\nu]$ conditional on $\nu\in \mc{G}$.
\item[$2)$] Under $\P$, conditional on $\nu\in \mc{G}$, the random variable $\E[L'(\nu, \infty)|\mc{G}'_\nu]$ is independent of $\1_{T(\nu)<\infty}$.
\item[$3)$]  For $P$-almost every $\mc{G}$,  $L(\nu, \infty) \le  L'(\nu, \infty)$ on the probability space in Section \ref{sec:coupling1}.
\end{enumerate}
\end{LEM}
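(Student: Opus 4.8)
The plan is to prove the three parts separately. Parts $1)$ and $2)$ will both come down to the branching property of the Galton--Watson tree $\mc{G}$, once one has identified precisely which of the exponential clocks $Y(\cdot,\cdot,\cdot)$ and which part of $\mc{G}$ each quantity depends on; part $3)$ is read off the coupling of Section~\ref{sec:coupling1}. For $3)$: if $\nu\notin\mc{G}$ both sides vanish by convention, so take $\nu\in\mc{G}$ with $\nu\ne\parent{\r}$ (the only case of interest), so that $\Gp_\nu$ is a special subtree (Definition~\ref{spect}) with $\r'=\parent{\nu}$ and $\X^{\ssup{\Gp_\nu}}$ is the extension of $\X$ on it. Since $X_0=\parent{\r}\ne\nu$ and $X^{\ssup{\Gp_\nu}}_0=\parent{\nu}\ne\nu$, it suffices to exhibit an injection from $\{j\ge1:X_j=\nu\}$ into $\{m\ge1:X^{\ssup{\Gp_\nu}}_m=\nu\}$. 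But if $X_j=\nu$ with $j\ge1$ then the step $X_{j-1}\to X_j$ crosses an edge incident to $\nu$, hence an edge of $E'_\nu$, so $j=\tau_m(\Gp_\nu)$ for some $m\le n_\infty$, and $X^{\ssup{\Gp_\nu}}_m=X_{\tau_m}=\nu$; distinct $j$ give distinct $m$. As this is a pointwise statement on the probability space of Section~\ref{sec:coupling1}, it yields $L(\nu,\infty)\le L'(\nu,\infty)$.

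For Part $1)$: by the construction of Section~\ref{sec:coupling1}, $\X^{\ssup{\Gp_\nu}}$ is a MAD walk on $\Gp_\nu$, started at $\parent{\nu}$ and driven by the clocks $\{Y(\mu,\mu',k):[\mu,\mu']\in E'_\nu\}$. As a labelled rooted tree with a distinguished parent, $\Gp_\nu$ is $\parent{\nu}$ joined to $\nu$ joined to the subtree of descendants of $\nu$, and that subtree, conditionally on $\nu\in\mc{G}$, is a Galton--Watson tree of mean $d$ independent of the rest of $\mc{G}$ (the branching property). Moreover $\omega^*$ marks only the edge $(\parent{\r},\r)$, whose weight influences neither the law of $\X$ on $\mc{G}$ (the step out of $\parent{\r}$ is forced and $W_n(\r,\parent{\r})\equiv1$) nor that of $\X^{\ssup{\Gp_\nu}}$ on $\Gp_\nu$, so both walks run as under the empty configuration. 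Hence, under $\P(\cdot\mid\nu\in\mc{G})$ and the tree isomorphism sending $\parent{\nu},\nu$ to $\parent{\r},\r$, the pair $\big(\Gp_\nu,\{Y(\mu,\mu',k):[\mu,\mu']\in E'_\nu\}\big)$ has the same law as the analogous data for $\mc{G}$, and $L'(\nu,\infty)$ is the very same measurable functional of it as $L(\r,\infty)$ is of the data for $\mc{G}$. Integrating out the clocks conditionally on the tree then gives $\E[L'(\nu,\infty)\mid\Gp_\nu]\overset{d}{=}\E[L(\r,\infty)\mid\mc{G}]$ under $\P(\cdot\mid\nu\in\mc{G})$.

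For Part $2)$: write $\mc{D}_\nu$ for the set of strict descendants of $\nu$. By Part $1)$, $\E[L'(\nu,\infty)\mid\Gp_\nu]$ is a deterministic function of $\Gp_\nu$, so it suffices to show that, conditionally on $\nu\in\mc{G}$, $\sigma(\Gp_\nu)$ is independent of $\{T(\nu)<\infty\}$. This follows from a clean split of the driving randomness. On one hand, $L'(\nu,\infty)$ is determined by $\Gp_\nu$ together with the clocks $\{Y(\mu,\cdot,\cdot):\mu\in\{\nu\}\cup\mc{D}_\nu\}$: starting at the leaf $\parent{\nu}$ the extension steps to $\nu$ deterministically, using no clock value, and thereafter it sits at vertices of $\{\nu\}\cup\mc{D}_\nu$, calling only on clocks emanating from those vertices (in particular on $Y(\nu,\parent{\nu},\cdot)$, but never on $Y(\parent{\nu},\nu,\cdot)$). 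On the other hand, before time $T(\nu)$ the walk $\X$ visits neither $\nu$ nor any vertex of $\mc{D}_\nu$ (reaching $\mc{D}_\nu$ would force passing through $\nu$), so $\{T(\nu)<\infty\}$ is determined by the part of $\mc{G}$ not involving the strict descendants of $\nu$ (which already decides whether $\nu\in\mc{G}$) together with the clocks $\{Y(\mu,\cdot,\cdot):\mu\notin\{\nu\}\cup\mc{D}_\nu\}$ — the final step into $\nu$ using only $Y(\parent{\nu},\nu,\cdot)$, which is of this type. These two clock families are disjoint, and the $Y$'s attached to distinct ordered pairs are independent; and, conditionally on $\nu\in\mc{G}$, the subtree of descendants of $\nu$ is independent of the part of $\mc{G}$ not involving those descendants, by the branching property. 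Combining these, $\E[L'(\nu,\infty)\mid\Gp_\nu]$ and $\1_{T(\nu)<\infty}$ are independent given $\{\nu\in\mc{G}\}$.

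The main obstacle is precisely the clock bookkeeping around the boundary edge $[\parent{\nu},\nu]$ that makes the split in Part $2)$ go through: one must observe that $Y(\parent{\nu},\nu,\cdot)$ is used by $\X$ in deciding whether to step into $\nu$ — so it belongs to the side carrying $\{T(\nu)<\infty\}$ — yet is \emph{not} used by the extension $\X^{\ssup{\Gp_\nu}}$, because $\parent{\nu}$ is a leaf of $\Gp_\nu$ and the step out of it is forced; and, dually, that $Y(\nu,\parent{\nu},\cdot)$ is used by the extension but never by $\X$ before $T(\nu)$. One must also be careful that $\Gp_\nu$ retains $\parent{\nu}$ merely as a degree-one vertex, so $\sigma(\Gp_\nu)$ carries no information about the number of children of $\parent{\nu}$, i.e.\ about the siblings of $\nu$; without this, the branching-property independence used in Parts $1)$ and $2)$ would fail. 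The remaining ingredients — the branching property of $\mc{G}$ and the mutual independence of the exponential clocks — are routine.
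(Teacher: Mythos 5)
Your proof is correct and takes essentially the same approach as the paper's, which for this lemma is only a brief sketch (asserting the equidistribution of $\Gp_\nu$ with $\mc{G}$, the independence of $\E[L'(\nu,\infty)\mid\Gp_\nu]$ from $\1_{T(\nu)<\infty}$, and the coincidence of the two processes on $\Gp_\nu$). You fill in precisely the bookkeeping the paper leaves implicit — which clocks drive the extension versus $\X$ around the boundary edge $[\parent{\nu},\nu]$, why $\omega^*$ is harmless, and why $\sigma(\Gp_\nu)$ sees nothing of $\del(\parent{\nu})$ — so this is a faithful elaboration rather than a different route.
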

\begin{proof}
These are all simple consequences that can be seen from the construction of the extension walks on the same probability space in Section \ref{sec:coupling1}.\\
 1)  Is immediate as  $\Gcal'_{\nu}$ (conditional on $\nu\in \mc{G}$) has the same distribution as $\Gcal$. \\
2) Is a consequence of the fact that (conditional on $\nu\in \mc{G}$), $\E[L'(\nu, \infty)|\mc{G}'_\nu]$ is a function of $\mc{G}'_\nu$, while the event $T(\nu)<\infty$ is independent of $\mc{G}'_{\nu}$.

\noindent 3)  For fixed $\mc{G}$, on the probability space of Section \ref{sec:coupling1}, the moves of the two processes  $\X^{\ssup{\Gp_{\nu}}}$ and $\X$ on $\Gp_{\nu}$ coincide up to the (possibly infinite) time when the latter leaves $\Gp_\nu$ forever. Hence, the process  $\X^{(\Gp_\nu)}$ visits $\nu$ at least the number of times that  $\X$ does on this probability space. \hfill
\end{proof}

\begin{LEM}\label{loct} If {$u_1\ge u_0$ and} $u_1\ge 1$ then, for all $\omega \in \Ccal$, there exists $C''< \infty$ such that 
\begin{equation}\label{visit}
  \bbE_\omega[L'(\nu, \infty)] <C''.
  \end{equation}
  \end{LEM}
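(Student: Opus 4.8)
The plan is to show that the expected number of visits of the extension walk $\X^{\sss(\Gp_\nu)}$ to $\nu$ is bounded uniformly over the starting configuration $\omega$, using the special structure of the tree $\Gp_\nu$ (which has a root $\r'=\parent{\nu}$ of degree one, and below $\nu$ an honest Galton--Watson tree with no leaves). First I would note that the number of visits $L'(\nu,\infty)$ is dominated by a geometric random variable: each time the walk is at $\nu$, it jumps to $\parent{\nu}$ with probability $1/(1+\sum_{i}W_n(\nu,\nu_i))$, and since each child weight is at least $u_0$ (when unreinforced) or more, one might hope for a lower-bounded escape probability. The subtlety is that escape to infinity (never returning to $\nu$) is what kills $L'(\nu,\infty)$, not merely a single step to $\parent{\nu}$; but since $\r'=\parent{\nu}$ has degree one in $\Gp_\nu$, once the extension walk steps from $\nu$ to $\parent{\nu}$ it is forced to step back to $\nu$ on the following move. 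So stepping to the parent is useless for escaping; escape must happen through the subtree below $\nu$.

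Therefore the key quantity is $q(\omega):=\bP^{\Gp_\nu}_{\omega,\nu}(\text{never return to }\nu)$, the probability that, started at $\nu$, the extension walk escapes into the descendants of $\nu$ forever without coming back to $\nu$. If I can show $\inf_\omega q(\omega)>0$ (uniformly, or at least bounded below by a constant depending only on $u_0,u_1,d$ and the law of the GW tree), then $L'(\nu,\infty)$ is stochastically dominated by a geometric random variable with success probability $q$, giving $\bE_\omega[L'(\nu,\infty)]\le 1/q<\infty$. To lower-bound $q(\omega)$: condition on the GW tree below $\nu$ surviving (which has positive probability, and on the no-leaves assumption it survives a.s.), then from $\nu$ the walk has a positive chance to reach a deep enough level so that, by Corollary \ref{cor:path} and the supercriticality argument (the green branching process of Section \ref{sec:green}, or directly the estimate $d^n\psi_n$-type bounds), there is a positive probability of never returning. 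The monotonicity Lemma \ref{lem:pathmono} and the partial order on configurations (Definition \ref{DEF:2.5}) are what let me handle arbitrary $\omega$: because $u_1\ge u_0$, a configuration $\omega$ with more reinforced edges only helps the walk escape outward (the extra weight $u_1\ge u_0$ on reinforced edges pushes away from $\nu$), so it suffices to bound $q(\omega^*)$ from below — or more carefully, to observe that adding reinforcement to edges near $\nu$ can only increase the chance of escaping. Actually one must be a touch careful: reinforcement on the edge $[\parent{\nu},\nu]$ itself would make returning to $\nu$ from $\parent{\nu}$ more likely, but since $\parent{\nu}$ has degree one this return is forced anyway, so it is immaterial; for edges strictly below $\nu$, more reinforcement with $u_1\ge u_0$ biases outward and helps escape. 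This is precisely where the hypothesis $u_1\ge u_0$ is used, as flagged in the remark after Theorem \ref{thm:speed}.

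Concretely, the steps in order: (1) observe $L'(\nu,\infty)$ is a geometric-type variable governed by the return probability to $\nu$ within $\Gp_\nu$, using that steps to $\parent{\nu}=\r'$ return immediately; (2) reduce to lower-bounding the no-return (escape into the subtree) probability $q(\omega)$ uniformly in $\omega$; (3) use the partial order on $\Ccal$ together with Lemma \ref{lem:pathmono} and $u_1\ge u_0$ to reduce to bounding $q$ below for the minimal configuration (essentially $\omega^*$ restricted to $\Gp_\nu$); (4) for that fixed configuration, use the transience input $du_0>1-u_1+u_0$ via Corollary \ref{cor:path}/the supercritical green tree of Section \ref{sec:green}: conditionally on the GW subtree below $\nu$ (which has no leaves), with positive probability there is an infinite green subtree rooted at $\nu$, and on that event the walk escapes $\nu$ with lower-bounded probability; (5) combine to get $\bE_\omega[L'(\nu,\infty)]\le 1/q=:C''<\infty$.

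The main obstacle I expect is step (3)–(4): making the monotonicity in $\omega$ rigorous for the extension walk (the coupling of Section \ref{sec:coupling1} gives pathwise monotonicity of hitting events on single paths via Lemma \ref{lem:pathmono}, but extending this cleanly to "never return to $\nu$ in the whole subtree $\Gp_\nu$" requires decomposing the escape event into a sequence of path-crossing events and checking the coupling respects the order edge-by-edge), and carefully isolating the role of the edge $[\parent{\nu},\nu]$ so that reinforcement there does not break the argument. A secondary technical point is that $C''$ should not depend on $\nu$ or on the realization of $\mc{G}$ below $\nu$ — this follows because $\Gp_\nu$ conditioned on $\nu\in\mc{G}$ has the same law as the augmented GW tree for every $\nu$, so the bound is genuinely uniform; but one should state that the bound is in expectation over this subtree, or alternatively note that the no-leaves assumption makes the relevant escape probability bounded below deterministically after conditioning on a few generations.
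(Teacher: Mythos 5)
Your overall plan is in the same spirit as the paper's: reduce via monotonicity to $\omega^*$, then exploit the supercritical green-tree structure to get a positive escape chance from $\nu$. But there is a genuine gap at the heart of your steps (4)--(5), and a second one in that you never locate where $u_1\ge 1$ is actually used.

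The gap is the jump from ``positive escape probability'' to $\bbE_\omega[L'(\nu,\infty)]\le 1/q$. Under the quenched law (tree $\Gp_\nu$ fixed), $L'(\nu,\infty)$ is indeed dominated by a geometric with success probability equal to the \emph{quenched} escape probability $q(\Gp_\nu,\omega)$. But that quenched escape probability is not bounded below uniformly over trees, even over no-leaf trees: if the tree below $\nu$ happens to be a long single ray before branching, the walk may be trapped near $\nu$ for a long time and $q$ can be as small as roughly $\psi_n$, where $n$ is the depth of the first real branching. So what must be shown is $\E[1/q(\Gp_\nu,\omega^*)]<\infty$, i.e.\ finiteness of the annealed expectation of the \emph{reciprocal} of a random quantity. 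Observing merely that the annealed escape probability is positive does not give this, since under the annealed law $L'(\nu,\infty)$ is not geometric — the walk sees the same tree each time it returns, so the events ``fail to escape on the $k$th visit'' are positively correlated and conditioning on many returns biases the tree towards bad realisations. Your last sentence hedges toward this, but the proposed fix (``bounded below deterministically after conditioning on a few generations'') is not true.

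The paper closes this gap with a device you haven't invoked: Lemma~\ref{lemfrompem}/Lemma~\ref{Pem-ref}, which plant a $(K,\lfloor m^K\rfloor)$-infinite regular subtree at a level $N$ that is dominated by a geometric random variable, and then the quenched escape probability from $\r$ is bounded below by $c\,\psi_N/\del(\r)$. One then needs $\E[\del(\r)\psi_N^{-1}]<\infty$. This is where $u_1\ge 1$ enters: Corollary~\ref{cor:path} gives that $\psi_n^{-1}$ grows only polynomially when $u_1=1$ (and is bounded when $u_1>1$), so the polynomial growth is beaten by the geometric tail of $N$. Your proposal never flags this, and indeed if $u_1<1$ then $\psi_n^{-1}$ grows exponentially and the argument would break. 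So both the $N$-decomposition and the role of $u_1\ge 1$ need to be supplied before the proof is complete. (Your observation that $\parent{\nu}$ has degree one so return from it is forced is correct and useful, and the monotonicity reduction to $\omega^*$ is the right first move, as in the paper.)
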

  Recall the Definition \ref{def:Kdinf2} of a $(K,{\bf d})$-infinite tree.
To prove Lemma \ref{loct} we need the following result of Pemantle \cite{Pemtree}. 

\begin{LEM}\label{lemfrompem}[Lemma 6 of \cite{Pemtree}]
Let $\mc{G}$ be a Galton-Watson tree with no leaves and with mean offspring $d>1$. Fix  $m \in (1, d)$. There exists an integer $K_0\ge1$ such that, for any $K\ge K_0$,
\[
\bbP\left(\mc{G} \text{ is }(K,\lfloor m^K\rfloor)\text{-infinite}\right)\ge \frac{1}{2}.
\]
\end{LEM}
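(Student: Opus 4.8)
The plan is to pass to the ``$K$-step skeleton'' of $\mc{G}$ and to recognise the event ``$\mc{G}$ is $(K,\floor{m^K})$-infinite'' as the event that a supercritical Galton--Watson tree contains an infinite regular subtree. Let $Z_K$ be the size of generation $K$ of $\mc{G}$ (so $Z_0=1$, $\bbE[Z_K]=d^K$, and $Z_K\ge 1$ a.s.\ because $\mc{G}$ has no leaves), and let $\wh{\mc{G}}_K$ denote the tree whose vertices are those of $\mc{G}$ at levels $0,K,2K,\dots$ and in which $\nu$ is joined to each of its descendants at level $|\nu|+K$. By the branching property $\wh{\mc{G}}_K$ is itself a Galton--Watson tree, with offspring law that of $Z_K$. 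Comparing with Definition~\ref{def:Kdinf2}, $\mc{G}$ is $(K,\floor{m^K})$-infinite \emph{if and only if} $\wh{\mc{G}}_K$ contains an infinite $\floor{m^K}$-ary subtree rooted at $\r$: given such a subtree one builds a witnessing $\Gp\subset\mc{G}$ by inserting, for each of its edges, the unique path joining its endpoints in $\mc{G}$ (these paths meet only at skeleton vertices, so no extra ``checkpoint'' vertex is created). Hence it suffices to show that $p_K:=\bbP\big(\wh{\mc{G}}_K\text{ contains an infinite }\floor{m^K}\text{-ary subtree}\big)\ge 1/2$ for all $K$ large.

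A first-step decomposition expresses $p_K$ as a fixed point: $\wh{\mc{G}}_K$ contains an infinite $n$-ary subtree iff at least $n$ of the root's children have subtrees that do, and these are independent events, so $p_K=F_K(p_K)$, where with $n=\floor{m^K}$,
\[
F_K(p)\;:=\;\sum_{k\ge1}\bbP(Z_K=k)\,\bbP\big(\mathrm{Bin}(k,p)\ge n\big).
\]
Iterating $F_K$ from $1$ shows (since $F_K^{(j)}(1)$ is the probability that $\wh{\mc{G}}_K$ contains an $n$-ary subtree of depth $j$, which decreases in $j$ to $p_K$) that $p_K$ is the \emph{largest} fixed point of $F_K$ in $[0,1]$. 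As $F_K$ is continuous and nondecreasing, it is therefore enough to verify that $F_K(1/2)\ge 1/2$ for large $K$: then $F_K^{(j)}(1/2)\ge 1/2$ for every $j$ by induction, hence $p_K=\lim_j F_K^{(j)}(1)\ge\lim_j F_K^{(j)}(1/2)\ge 1/2$.

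To estimate $F_K(1/2)$ observe that $\bbP(\mathrm{Bin}(k,1/2)\ge n)$ is nondecreasing in $k$, so restricting the sum to $k\ge 4n$, comparing with $\mathrm{Bin}(4n,1/2)$ (mean $2n$) and using Hoeffding's inequality,
\[
F_K(1/2)\;\ge\;\bbP\big(Z_K\ge 4n\big)\,\bbP\big(\mathrm{Bin}(4n,1/2)\ge n\big)\;\ge\;\big(1-e^{-n/2}\big)\,\bbP\big(Z_K\ge 4\floor{m^K}\big).
\]
Both factors tend to $1$ as $K\to\infty$: the first because $n=\floor{m^K}\to\infty$ (here $m>1$); the second because $\mc{G}$ is supercritical with finite mean $d$ and, having no leaves, survives almost surely, so by the Seneta--Heyde theorem there are norming constants $c_K$ with $Z_K/c_K\to W>0$ a.s.\ and $c_K^{1/K}\to d$, whence $Z_K^{1/K}\to d$ a.s., and since $m<d$ this forces $Z_K>4m^K$ for all $K$ large, almost surely. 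Thus $F_K(1/2)\ge 1/2$ for all $K\ge K_0$ with $K_0$ suitable, which gives $p_K\ge 1/2$ and completes the argument.

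The only genuinely delicate ingredient is the last one: one needs that $Z_K$ outgrows $m^K$ with probability tending to $1$, using only the hypotheses $d\in(1,\infty)$ and no leaves (in particular with no $L\log L$ moment assumption). The Seneta--Heyde norming supplies exactly this, and one could alternatively invoke directly the fact that $Z_K^{1/K}\to d$ almost surely on non-extinction. Everything else --- the skeleton reduction, the fixed-point characterisation of $p_K$, and the binomial concentration --- is routine.
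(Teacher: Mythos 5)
Your proof is correct, and it supplies a self-contained argument for a statement that the paper itself does not prove: the paper cites Lemma~6 of Pemantle~\cite{Pemtree} (together with a remark that inspecting Pemantle's proof yields the ``for all $K\ge K_0$'' form), so there is no ``paper's own proof'' to line your argument up against. Your route --- pass to the $K$-step skeleton $\wh{\mc{G}}_K$, characterise the probability $p_K$ that it contains an infinite $\floor{m^K}$-ary subtree as the largest fixed point of the monotone continuous map $F_K$, show $F_K(1/2)\ge 1/2$ for $K$ large by splitting off the binomial concentration from $\bbP(Z_K\ge 4\floor{m^K})$, and feed in $Z_K^{1/K}\to d$ a.s.\ (Seneta--Heyde, combined with a.s.\ survival since there are no leaves) --- is sound at every step. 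The fixed-point reasoning is the standard one ($F_K^{(j)}(1)$ decreases to $p_K$, $F_K^{(j)}(1/2)$ increases once $F_K(1/2)\ge1/2$, and by monotonicity the limits are sandwiched), and the Hoeffding estimate $\bbP(\mathrm{Bin}(4n,1/2)<n)\le e^{-n/2}$ is fine. Two minor remarks: (a) the parenthetical ``these paths meet only at skeleton vertices'' is not literally true --- two paths from a skeleton vertex to two of its $K$-descendants may share an initial segment --- but the conclusion you draw from it \emph{is} correct, namely that the only vertices of $\Gp$ at levels in $K\Z_+$ are skeleton vertices, since all inserted path-interior vertices sit at levels strictly between consecutive multiples of $K$; (b) Definition~\ref{def:Kdinf2} requires $\mathbf{d}>1$, so one should take $K_0$ large enough that $\floor{m^K}\ge 2$ as well, which is automatic since $m>1$.
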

\begin{REM}
Pemantle  \cite{Pemtree} states that there exists an integer $K$ such that the conclusion holds but a quick inspection of the proof reveals the former result.\\
In  \cite{Pemtree}, a branching process is said to be ${\bf d}$-infinite if it is $(1,{\bf d})$-infinite according to Definition \ref{def:Kdinf2}. Moreover, in \cite{Pemtree}, for a branching process $B$, $B^{(K)}$ denotes the branching process consisting of the particles of $B$ in generations $K\Z_+$, and therefore $B$ is  $(K,\lfloor m^K\rfloor)$-infinite if and only if $B^{(K)}$ { is } $\lfloor m^K\rfloor${-infinite}.
\end{REM}

A consequence of this last result is the following.
\begin{LEM}\label{Pem-ref}
Let $\mc{G}$ be a Galton-Watson tree with no leaves with mean offspring $d>1$. Fix $m\in (1, d)$.  There exists an integer $K_0\ge1$ such that, for any $K \ge K_0$, 
\[
\bbP\left(\exists \nu\in \mc{G} \colon \Gp_\nu \text{ is } (K,\lfloor m^K\rfloor)\text{-infinite}\right)=1.
\]
\end{LEM}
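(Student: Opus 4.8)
The plan is to derive Lemma \ref{Pem-ref} from Lemma \ref{lemfrompem} by a Borel--Cantelli-type argument exploiting the self-similarity of the Galton--Watson tree. First I would fix $m\in(1,d)$ and let $K_0$ be the integer given by Lemma \ref{lemfrompem}, and fix any $K\ge K_0$; write ${\bf d}=\lfloor m^K\rfloor>1$. For a vertex $\nu\in\mc{G}$ with children revealed, the subtree $\Gp_\nu$ (consisting of $\parent\nu$, $\nu$ and all descendants of $\nu$) has, once we forget $\parent\nu$, exactly the law of $\mc{G}$ itself, since $\mc{G}$ is Galton--Watson. Hence for each such $\nu$, $\bbP(\Gp_\nu\text{ is }(K,{\bf d})\text{-infinite}\mid \nu\in\mc{G})\ge \tfrac12$ by Lemma \ref{lemfrompem} (the extra parent $\parent\nu$ plays no role in Definition \ref{def:Kdinf2}, which only concerns descendants).

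Next I would produce an infinite sequence of vertices whose associated subtrees give independent trials. Since $d>1$, on $G_\infty$ the tree $\mc{G}$ is infinite; one can then pick a sequence of vertices $\nu_1,\nu_2,\dots$ along an infinite ray (or, more robustly, choose $\nu_{j+1}$ to be a descendant of $\mc{G}$ lying strictly outside $\Gp_{\nu_j}$, e.g.\ a vertex on a different branch, which exists for infinitely many $j$ because a supercritical GW tree without leaves a.s.\ has infinitely many branch points). The point is that the events $\{\Gp_{\nu_j}\text{ is }(K,{\bf d})\text{-infinite}\}$ can be arranged to depend on disjoint portions of the tree, hence are independent, each with probability $\ge\tfrac12$. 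By the second Borel--Cantelli lemma, $\bbP$-a.s.\ on $G_\infty$ at least one (in fact infinitely many) of these events occurs, so $\exists\nu\in\mc{G}$ with $\Gp_\nu$ being $(K,{\bf d})$-infinite. Since we are working under \eqref{no_leaves} (every vertex has at least one child), the relevant probability space is $G_\infty$ up to $P$-null sets, giving the claimed probability $1$.

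I expect the main obstacle to be the bookkeeping needed to extract a genuinely independent subsequence: naively the events $\{\Gp_\nu\text{ is }(K,{\bf d})\text{-infinite}\}$ for different $\nu$ overlap (e.g.\ if one is a descendant of another), so one must be careful to select the $\nu_j$ so that the subtrees $\Gp_{\nu_j}$ are pairwise disjoint (equivalently, no $\nu_j$ is an ancestor of $\nu_{j'}$). This is where survival without leaves and the supercriticality are used: they guarantee infinitely many opportunities to branch off a fresh, independent copy of $\mc{G}$. Alternatively, one can avoid independence entirely and use a direct first/second-moment or conditional Borel--Cantelli argument along a single ray, but handling the spatial independence of GW subtrees is the cleanest route, so I would spell out the disjointness of the $\Gp_{\nu_j}$ and invoke the branching property of $\mc{G}$ to conclude independence.
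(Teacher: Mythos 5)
Your proposal is correct and matches the paper's proof: both derive the claim from Lemma \ref{lemfrompem} by extracting an infinite sequence of pairwise-disjoint, independent GW subtrees hanging off a spine (each $(K,\lfloor m^K\rfloor)$-infinite with probability $\ge \tfrac12$) and then concluding via a Borel--Cantelli / geometric-domination argument with parameter $p/2$, where $p=\P(\eta_0\ge2)>0$ (this uses $d>1$ exactly as you note). One small caution: your first suggestion of taking $\nu_1,\nu_2,\dots$ directly along an infinite ray gives nested, hence dependent, subtrees and would not work as stated; your ``more robust'' variant of branching off the ray is precisely the spine decomposition the paper writes out.
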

\begin{proof}
Let $(\eta_i)_{i \in \N}$ be an i.i.d.~sequence of r.v.~with marginal distribution equal to the offspring distribution of $\Gcal$, and let $p=\P(\eta_0\ge2)>0$. Then a way to generate the Galton-Watson tree is the following procedure. First, fix an infinite ray, or half-line, composed of vertices $\r \sim \nu^{\ssup 1} \sim \nu^{\ssup 2}\ldots$, with $|\nu^{\ssup i}| =i$.  Then for any $j\ge1$, attach $\eta_j-1$ new children to $\nu^{\ssup j}$ and from each of them, start an independent Galton-Watson tree distributed like $\Gcal{\tiny }$.
Define 
$$N=\inf\{j\ge 1\colon \ \nu^{\ssup j}\text{ has a child } \nu'\neq\nu^{\ssup {j+1}}\text { s.t. } \Gcal_{\nu'}'\text{ is } (K,\lfloor m^K\rfloor)\text{-infinite}\}.$$
Note that $N$ is independent of $\del({\r})$.
Moreover, the random variable $N$ is dominated by a geometric random variable with parameter $p/2$, as this corresponds to the probability that a given vertex has at least two children and that one given child grows an infinite regular backbone. Hence, $N$ is finite almost surely, which proves the Lemma. \hfill
\end{proof}

\begin{proof}[Proof of Lemma \ref{loct}]
For any $\omega \in \Ccal$, the fact that $u_1\ge u_0$ together with Lemma \ref{lem:pathmono} implies that
\[
\bbE_{\omega^*}[L'(\nu,\infty)]\ge \bbE_{\omega}[L'(\nu,\infty)].
\]
Hence, it is sufficient to {prove the statement for} $\omega=\omega^*$.  In virtue of Lemma~\ref{LProp} 1)  it is enough to prove  that  there exists a constant $C''$ such that 
\begin{equation}\label{visitr}
\bbE[L(\r, \infty)] <C''.
\end{equation}
Since $du_0>1-u_1+u_0$, we can find an $m\in(1,d)$ such that $mu_0>1-u_1-u_0$ and hence $m^n\psi_n>1$ for all $n$ sufficiently large.  Moreover we can find such an $m$ and a $K_0$ (being provided by Lemma \ref{lemfrompem}) such that ${\bf d}^n\psi_{nK}>1$ for all $K\ge K_0$,  where ${\bf d} =  \lfloor m^K\rfloor$.
Let $\r \sim \nu^{\ssup 1} \sim \nu^{\ssup 2}\sim\ldots $ and $N$ as in the proof of Lemma~\ref{Pem-ref} and let $\bar{\nu}$ be the children of $\nu^{\ssup N}\neq \nu^{\ssup {N+1}}$ such that $\Gp_{\bar{\nu}}$ is $(K,\lfloor m^K\rfloor)$-infinite.
Moreover, recall that $\P(N \ge n) \le (1-p/2)^n$ for some $p>0$. Hence
\begin{equation}\label{lawoftotexp}
 \bbE[L(\r, \infty)]  \le  \sum_{n=1}^\infty \bbE[L(\r, \infty)\;|\; N=n] (1-p/2)^n.
\end{equation}
Next, we prove that 
\begin{equation}\label{loctimeatnu}
\bbE[L(\r, \infty)\;|\; N=n] \qquad \mbox{ grows polynomially in } n,
\end{equation}
 which combined with \eqref{lawoftotexp} would end the proof.
Let us prove that, on $\{N=n\}$, $L(\r, \infty)$ is stochastically dominated by a non-degenerate geometric random variable. For this purpose, we bound the probability that the walker jumps from $\r$ to $\nu^{\ssup 1}$, then hits $\bar{\nu}$ before $\r$ and then escapes to infinity from $\bar{\nu}$ without hitting $\nu^{\ssup N}$ (and thus $\r$).

First, since $u_0\le u_1$, the probability for $\mathbf{X}$ to jump from $\r$ to $\nu^{\ssup 1}$, given $\del(\r)$, is at least $u_0/((1+\del(\r))u_1)$.\\
Second, each time $\mathbf{X}[\r, \bar{\nu}]$, the extension of $\mathbf{X}$ to the path $[\r, \bar{\nu}]$, is at $\nu^{\ssup 1}$ it has a probability  at least $\psi_{n}$ to hit $\bar{\nu}$ before $\r$.\\
Third, as $\bar{\nu}$ is the root of a $(K,\lfloor m^K\rfloor)$-infinite tree, each time the extension process $\X^{\Gp_{\bar{\nu}}}$ is on $\bar{\nu}$, the probability that it never returns to  $\nu^{\ssup n}$, the parent of $\bar{\nu}$, is bounded away from zero by a positive constant $\beta^*$, due to Lemmas~\ref{lemspb} and \ref{betaa} and the fact that $u_0\le u_1$. 

All of this provides a lower-bound on the  quenched probability (given $\{N=n\}$) that $\mathbf{X}$ escapes from $\r$ to infinity, regardless of the past history.  Specifically, for any environment $\tilde{\omega}\in\mathcal{C}$, we have that
\[
\bP^{\mc{G}}_{\tilde{\omega},\r}\left[T(\r^{-1})=\infty\right]\ge\frac{u_0}{(1+\del(\r))u_1}\times \psi_n\times \beta^*.
\]
Hence, under the quenched measure, the number of visits to $\r$ is bounded by a geometric random variable and we obtain that, on $\{N=n\}$,
\[
\bE^{\mc{G}}[L(\r, \infty)]\le C\del(\r)\psi_n^{-1}.
\]
Note that $\psi_n$ is deterministic and that the random variables $\del(\r)$ and $N$ are independent, hence $\bbE[L(\r, \infty)\;|\; N=n] \le Cd\psi_n^{-1}$.  
Finally, by Corollary \ref{cor:path}, $ (\psi_n)^{-1}$ is of the order of $n^{u_0^{-1}}$ in the case $u_1=1$ and of the order of $1$ when $u_1>1$.
This implies the result.
\hfill
\end{proof}

\begin{proofsect}{Proof of Proposition~\ref{mainth}} Recall  that $T_n$ is the hitting time of level $n$ by $\X$. 

Utilizing Tonelli's Theorem for non-negative functions we have
\begin{equation}
\begin{aligned}
\bbE[T_n] &\le  \bbE\Big[\sum_{j=0}^{n-1} \sum_{\nu\in \mc{M} \colon |\nu| = j } L'(\nu, \infty)\1_{T(\nu) < \infty}\Big]  \qquad \Big(\mbox{by Lemma~\ref{LProp},  3)}\Big)\\
&= \sum_{j=0}^{n-1} \sum_{\nu \colon |\nu|= j}   \bbE \Big[ L'(\nu, \infty)\Big] \bbP(T(\nu) < \infty)\qquad \Big(\mbox{by Lemma~\ref{LProp},  2)}\Big)\\
&\le C'\sum_{j=0}^{n-1} \sum_{\nu \colon |\nu|= j}  \bbP(T(\nu) < \infty) \qquad \Big(\mbox{by Lemma~\ref{loct}}\Big)\\ \label{lasteqtn}
&\le C'\sum_{j=0}^{n-1}  \bbE[\xi_j] \le Cn \qquad \Big(\mbox{by \eqref{moment}}\Big).
\end{aligned}
\end{equation}
\hfill \qed

\end{proofsect}

\medskip

\begin{proofsect}{Proof of positivity of the speed in Theorem \ref{thm:speed}, in the case $u_1\ge1$}
As  the speed, i.e. $v = \lim_{n \ti} |X_n|/n$
    exists in $[0, \infty]$, $\bbP$-a.s.,  we have that $n/T_n\equiv X_{T_n}/T_n\ra v$ almost surely.  Thus $\lim_{n \ti} T_n/n = 1/v$, $\bbP$-a.s.~(where we interpret $1/v=+\infty$ if $v=0$). Using Fatou's lemma and Proposition \ref{mainth} we have
\begin{equation} \label{concpositive}
  \frac 1v = \lim_{n \ti} \frac {T_n}n = \bbE\left[\lim_{n \ti} \frac {T_n}n\right] \le \liminf_{n \ti} \bbE\left[\frac {T_n}n\right] <\infty.
  \end{equation}
\hfill \qed
\end{proofsect}

%
%
%
%

\subsubsection{Bias towards the root: the case $u_1<1$}\label{speedroot}

In this section, we prove positivity of the speed when $u_1<1$.  
Recall the regenerative structure introduced in Definition \ref{defregene} and Proposition \ref{prp:cutl}. In particular, using only the fact that $\bbP\left(T(\parent{\r}) = \infty\right)>0$, we know that $\bbP$-a.s.~the speed exists, i.e.~$|X_n|/n$ converges. Moreover,
\[
v=\lim_{n\to\infty}\frac{|X_n|}{n}=\lim_{n\to\infty}\frac{|X_{\tau_1+n}|-|X_{\tau_1}|}{n},\ \bbP\text{-a.s.}
\]
and $|X_{\tau_1+n}|-|X_{\tau_1}|$ under $\bbP$ has the same law as $|X_n|$ under the conditional measure $\overline{\bbP}(\cdot):=\bbP\left(\cdot|T(\parent{\r}) = \infty\right)$. Again, we want to estimate the number of returns to a vertex and bound $\overline{\E}[T_n]$.

Under $\P$, on the event that $T(\nu)<\infty$, the offspring of $\nu$ has the usual offspring distribution of a Galton-Watson process with law $P$, and we can consider the extension ${\bf X}^{(\nu)}$ of the walk ${\bf X}$ on the (finite) subtree composed by the unique self-avoiding paths between $\parent{\r}$ and the children of $\nu$.
Let $L^{(\nu)}$ denote the number of visits to $\nu$ by ${\bf X}^{(\nu)}$ between its first hitting time of $\nu$ and its subsequent hitting  time of $\parent{\r}$.  Then 
$$L(\nu,\infty) \1_{T(\nu)<\infty} \1_{T(\parent{\r}) = \infty}\le \1_{T(\nu)<\infty} L^{(\nu)},$$ 
where the left hand side is the number of visits of  $\mathbf{X}$ to $\nu$ on the event that $\nu$ is hit but the walk never returns to $\parent{\r}$.

When ${\bf X}^{(\nu)}$ first reaches $\nu$, the environment seen at this time is the environment $\omega_\nu$ such that $\Gamma_{\omega_\nu}$ contains every edge from $\r^{-1}$ to $\nu$ and no others.  Recall that $\P_{\omega_\nu,\nu}$ is the annealed law of a walk started at $\nu$ in environment $\omega_\nu$ (conditional on $\nu \in \mc{G}$).  Then 
\[\E[\1_{T(\nu)<\infty} L^{(\nu)}]=\E\left[\1_{T(\nu)<\infty} \E_{\omega_\nu,\nu}[L^{(\nu)}]\right]=\P(T(\nu)<\infty) \E_{\omega_\nu,\nu}[L^{(\nu)}].\]

Under the measure $\P_{\omega_\nu,\nu}$, in order for ${\bf X}^{(\nu)}$ to hit the root before coming back to $\nu$, it needs to step to $\nu^{-1}$, and then (on a one-dimensional segment) hit the root before $\nu$.  Under this measure, after first stepping to $\nu^{-1}$, ${\bf X}^{(\nu)}$ has the law of a random walk with drift on a segment of $\Z$.  Since $u_1<1$, this random walk has a negative drift, with left step probability $p(u_1)=1/(1+u_1)>1/2$, and thus it has probability at least $h(u_1)=(2p(u_1)-1)/p(u_1)=1-u_1>0$ of hitting $\r^{-1}$ before returning to $\nu$, irrespective of $|\nu|$.

Thus, regardless of the distance between $\nu$ and $\r$, 
\begin{eqnarray}\nonumber
\E_{\omega_\nu,\nu}[L^{(\nu)}]&=& \sum_{k\ge0} \P_{\omega_\nu,\nu}\left(L^{(\nu)}\ge k+1\right)\\
&=&\sum_{k\ge0} \E_{\omega_\nu,\nu}\left[\P_{\omega_\nu,\nu}\left(L^{(\nu)}\ge k+1\big|\del(\nu)\right)\right]\\
&\le & \sum_{k\ge0} \E_{\omega_\nu,\nu}\left[\left(1-\frac{1}{u_1  \del(\nu)+1}\times h(u_1)\right)^k\right].
\end{eqnarray}
By moving the sum inside the expectation and summing we obtain
\begin{eqnarray}\nonumber
\E_{\omega_\nu,\nu}[L^{(\nu)}]&\le &\E_{\omega_\nu,\nu}\left[\frac{u_1  \del(\nu)+1}{1-{u_1} }\right]= \frac{u_1  d+1}{1-{u_1} }.  
\label{silverlining}
\end{eqnarray}
It follows that
\begin{align}
\overline{\E}\left[\1_{T(\nu)<\infty}L(\nu,\infty)\right]&=\frac{\E\left[\1_{T(\nu)<\infty}\1_{T(\parent{\r}) = \infty}L(\nu,\infty)\right]}{\P(T(\parent{\r}) = \infty)}\\
&\le \frac{\P(T(\nu)<\infty) \E_{\omega_\nu,\nu}[L^{(\nu)}]}{\P(T(\parent{\r}) = \infty)}\\
&\le \frac{\P(T(\nu)<\infty)}{\P(T(\parent{\r}) = \infty)}\times \frac{u_1  d+1}{1-{u_1} }.
\end{align}

\blank{
In particular, we have that:
\begin{eqnarray*}
\overline{\bbE}\left[\1_{T(\nu)<\infty}L(\nu,\infty)\right]&\le& \frac{{\bbE}\left[ \bP^{\mc{G}}\left(T(\nu)<\infty)\right) E_\nu[L^{(\nu)}]\right]}{\bbP\left[T(\parent{\r}) = \infty\right]}\\
&\le& \frac{\bbP\left[T(\nu)<\infty\right] }{\bbP\left[T(\parent{\r}) = \infty\right]}\times\frac{4\bbE[d_\nu]}{1-{u_1} }\\
&\le& \frac{\bbP\left[T(\nu)<\infty\right] }{\bbP\left[T(\parent{\r}) = \infty\right]}\times\frac{4d}{1-{u_1} },
\end{eqnarray*}
where we used the Markov property, the bound \eqref{silverlining} and the fact that $\1_{T(\nu)<\infty}$ and $d_\nu$ are $P$-independent, where  $P$ is the law of the Galton-Watson tree.\\
}
But, we have that
\[\overline{\E}[T_n]=\sum_{j=0}^{n-1}\sum_{\nu\in \mc{M}:|\nu|=j}\overline{\E}\left[\1_{T(\nu)<\infty}L(\nu,\infty)\right].\]
So repeating the argument in \eqref{lasteqtn} we see that there exists a finite constant $C$ such that $\overline{\E}\left[T_n\right]<Cn$. In turn, using the same argument as in \eqref{concpositive}, one can conclude that there exists $v>0$ such that $|X_n|/n$ converges to $v$   $ \overline{\bbP}$-a.s.. As $\tau_1 <\infty$, we have that $|X_n|/n$ converges to $v$  $ \bbP$-a.s., which concludes the proof.

\begin{REM}
Note that it is crucial here to work under the conditioned measure $\overline{\bbP}$ in order to prove that a vertex cannot be too much visited. Indeed, it could happen that, before the first regeneration time, the walker visits a very large number of time the root. This behavior is particular to the first regeneration slab and cannot happen again later.
\end{REM}

\section{Monotonicity of the speed}
\label{sec:mono}
 Here, we divide the proof of Theorem \ref{thm:mono} into two main subsections. In Section \ref{sec:genstate}, we explain the idea of the proof, which is an adaptation of an argument of Ben Arous, Fribergh and Sidoravicius \cite{BAFS}. We provide general statements that one should be able to apply in a variety of situations  by simply checking some key requirements.\\
 In Section \ref{proofmult}, we apply these results to the Once-reinforced random walks in the multiplicative case. The core of the proof is a coupling, for which we give a complete construction. In Section \ref{proofadd}, we explain how to apply the same general results to the additive case, providing much less details.
 
\subsection{General statements} \label{sec:genstate}
When the underlying bias is very strong, there are at least 3 different strategies that one can try to employ to prove monotonicity of the speed for small values of the reinforcement.  
Both expansion methods (such as \cite{HH12,HH10,H12,HSun12}) and Girsanov transform methods (such as \cite{Pham1,Pham2}) have been successfully used to prove regularity properties of the velocity for highly transient (e.g.~by taking $d$ large) self-interacting walks on $\Z^d$.  It is plausible that similar techniques can be employed on trees as well, and it would be of interest to see if significant improvements to Theorem \ref{thm:mono} can be made by employing these methods.  

Coupling is perhaps the most natural approach when trying to prove monotonicity.  When $d=1$ one can prove monotonicity in $\beta>0$  for all $\alpha>1$ by coupling e.g.~as in \cite{HS12,Hol15}, but on the other hand it is an easy exercise to compute the speed directly in this setting (with $\alpha\ge 1$) as 
\[
\frac{\alpha-1}{\alpha+1+2\beta}
,\]
in the multiplicative setting and 
\[\frac{\alpha(\alpha-1)}{\alpha(\alpha+1+\beta)+2\beta(1+\beta)},\]
in the additive setting.
Coupling has also been successfully employed to prove monotonicity of the speed of random walk in random environments where the local environment can only take two different values \cite{HS14}.  

Neither of the above coupling strategies seem to be applicable here.  We therefore adapt the coupling approach of Ben Arous, Fribergh and Sidoravicius \cite{BAFS}.  Denote by $\T_d$ the regular rooted tree where each vertex has exactly $d+1$ neighbors, with the exception of $\parent{\r}$ which has exactly one neighbor, i.e. $\r$. The main idea is to couple three walks:
\begin{enumerate}
\item[(1)] $\Xb$ a ORbRW on $\T_d$ with bias $\alpha$ and with reinforcement parameter $\beta$;
\item[(2)] $\Xe$ a ORbRW on $\T_d$ with bias $\alpha$ and with reinforcement parameter $\beta+\vep$;
\item[(3)] $\Y$ a biased random walk on $\Z$,
\end{enumerate}
such that both $\X$ walks step away from the root when $\Y$ steps to the right.  This gives a common regeneration structure and consequently a useful expression for $v(\beta)-v(\beta+\vep)$.  

The main difference of our work compared to the case studied in \cite{BAFS} is in the way in which the walks can decouple.  

To state the relevant results explicitly, we recall the concept of regeneration times and levels of walks on $\Z$.  We say that $N$ is a regeneration time for a walk $\zb=(Z_n)_{n \in \Z_+}$ on $\Z$ if $\min_{m\ge N}Z_m>\max_{n<N}Z_n$.  In this case we say that $Z_{N}$ is a regeneration level for $\zb$.  Let $\mc{D}_{\zb}$ denote the set of regeneration {times} for $\zb$.

If $\Y$ is a random walk on $\Z$ with positive bias, then the set of regeneration times $\mc{D}_Y$ is almost surely infinite and with positive probability $0 \in \mc{D}_Y$.  Let $\mc{D}_0=\{0\in \mc{D}_Y\}$.  Let $\tau_1$ denote the first strictly positive regeneration time for $\Y$, and for each $k\ge 1$ recursively define $\tau_{k+1}$ to be the first regeneration time for $\Y$ after time $\tau_{k}$.

\begin{LEM}
\label{lem:super0}
Suppose that $\zb$ and $\zb'$ are nearest neighbour walks (on $\Z$ or $\Z_+$) and that $\Y$ is a nearest neighbour simple random walk on $\Z$ with $\P(Y_1=1)=p>1/2$, all on the same probability space such that:
\begin{itemize}
\item[(i)] $Z_{n+1}-Z_{n}=Z'_{n+1}-Z'_{n}=1$ whenever $Y_{n+1}-Y_n=1$,
\end{itemize}
then the regeneration times $\tau_i$ of $\Y$ are also regeneration times for the walks $\zb$ and $\zb'$. Moreover, if
\begin{itemize}
\item[(ii)] $(Z_{\tau_{i+1}}-Z_{\tau_i})_{i \ge1}$ are i.i.d.~random variables and $(Z'_{\tau_{i+1}}-Z'_{\tau_i})_{i \ge1}$ are i.i.d.~random variables, with $Z_{\tau_{i+1}}-Z_{\tau_i}$ and $Z'_{\tau_{i+1}}-Z'_{\tau_i}$ being independent of $(\tau_k:k\le i)$ for each $i$, and
\item[(iii)] $\E\left[Z_{\tau_1}-Z'_{\tau_1}\big|\mc{D}_0\right]>0$.
\end{itemize}
Then there exist $v>v'>0$ such that $\P(n^{-1}Z_n\ra v, n^{-1}Z'_n\ra v')=1$.
\end{LEM}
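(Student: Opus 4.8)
The plan is to reduce everything to the strong law of large numbers applied along the regeneration times $\tau_i$ of $\Y$, and then to compare the two limits using hypothesis (iii). First I would check that each $\tau_i$ is indeed a regeneration time for $\zb$ (and likewise for $\zb'$): by definition of regeneration time for $\Y$ we have $Y_{m+1}-Y_m=1$ for all $m\ge \tau_i$ with $m<\tau_{i+1}$ in the relevant sense; more precisely, $\tau_i$ being a regeneration time for $\Y$ means $\min_{m\ge \tau_i}Y_m>\max_{n<\tau_i}Y_n$, so in particular at every time $\ge \tau_i$ at which $\Y$ is at a new record it must step right. Hypothesis (i) then forces $\zb$ to step right at precisely those times, and a short argument (the walk $\zb$ can only increase at times when $\Y$ increases, hence only by $+1$ steps tied to record steps of $\Y$ near $\tau_i$, together with the fact that $\zb$ after $\tau_i$ is confined to the levels visited by $\Y$ after $\tau_i$) shows $\min_{m\ge\tau_i}Z_m > \max_{n<\tau_i}Z_n$. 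I expect this first step to be the main obstacle, since it requires carefully translating the one-sided record condition for $\Y$ into a record condition for $\zb$ using only (i); the subtlety is that $\zb$ may wander left while $\Y$ does, so one has to argue that $\zb$ cannot fall below its pre-$\tau_i$ maximum, which uses that the downward excursions of $\zb$ between consecutive right-steps are bounded by those of $\Y$ and occur above the regeneration level.

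Once the $\tau_i$ are regeneration times for both walks, I would write the telescoping decompositions
\begin{equation}
Z_{\tau_k}-Z_{\tau_1}=\sum_{i=1}^{k-1}(Z_{\tau_{i+1}}-Z_{\tau_i}),\qquad \tau_k-\tau_1=\sum_{i=1}^{k-1}(\tau_{i+1}-\tau_i),
\end{equation}
and similarly for $\zb'$. By (ii) the increments $Z_{\tau_{i+1}}-Z_{\tau_i}$ are i.i.d.\ and, by the independence of $Z_{\tau_{i+1}}-Z_{\tau_i}$ from $(\tau_k:k\le i)$ together with the standard renewal structure of regeneration times of a biased walk on $\Z$, the pairs $(Z_{\tau_{i+1}}-Z_{\tau_i},\tau_{i+1}-\tau_i)_{i\ge 1}$ are i.i.d.\ with finite mean (the $\tau_{i+1}-\tau_i$ have exponential tails, and $0\le Z_{\tau_{i+1}}-Z_{\tau_i}\le \tau_{i+1}-\tau_i$ so these increments have finite mean too). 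The SLLN then gives $\tau_k/k\to \E[\tau_2-\tau_1]$ and $Z_{\tau_k}/k\to \E[Z_{\tau_2}-Z_{\tau_1}]=:\mu\ge 0$ a.s., hence $Z_{\tau_k}/\tau_k\to v:=\mu/\E[\tau_2-\tau_1]$, and a standard sandwiching between consecutive regeneration times (using that the $Z$-increments over one regeneration block are $o(k)$, itself a consequence of the SLLN) upgrades this to $Z_n/n\to v$ a.s. The identical argument gives $Z'_n/n\to v'$ with $v'=\mu'/\E[\tau_2-\tau_1]$, $\mu'=\E[Z'_{\tau_2}-Z'_{\tau_1}]$.

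It remains to show $v>v'>0$. Positivity of $v'$ (and hence of $v$, since the same bound applies): conditionally on $\mc{D}_0$ the increment $Z'_{\tau_2}-Z'_{\tau_1}$ has the same law as $Z'_{\tau_1}$ under $\P(\cdot\mid\mc{D}_0)$, and by (i) we have $Z'_{\tau_1}\ge Y_{\tau_1}$ on $\mc{D}_0$ (each right step of $\Y$ up to time $\tau_1$ forces a right step of $\zb'$, and $\zb'$ starts and regenerates in the same fashion), while $Y_{\tau_1}\ge 1$; more carefully, one bounds $\E[Z'_{\tau_2}-Z'_{\tau_1}]$ below by a positive constant using that with positive probability the block consists of a single right step. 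For the strict inequality $v>v'$: since both speeds have the common positive denominator $\E[\tau_2-\tau_1]$, it suffices that $\mu>\mu'$, i.e.\ $\E[Z_{\tau_2}-Z_{\tau_1}]>\E[Z'_{\tau_2}-Z'_{\tau_1}]$; by the i.i.d.\ structure this equals $\E[Z_{\tau_1}-Z'_{\tau_1}\mid \mc{D}_0]$ up to the (common, positive, finite) renewal normalisation, and that quantity is strictly positive by hypothesis (iii). This completes the proof. \qed
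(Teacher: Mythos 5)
Your overall plan matches the paper's: first check that the $\tau_i$ are regeneration times for $\zb$ and $\zb'$, then run the regeneration-block SLLN, then use (iii) to order the speeds. The paper, however, disposes of the first step with a much cleaner argument than the one you sketch. Since $\zb$, $\zb'$, $\Y$ are all nearest-neighbour (so their increments lie in $\{-1,+1\}$), hypothesis (i) forces the pointwise inequality $Z_i-Z_{i-1}\ge Y_i-Y_{i-1}$ at \emph{every} step (equality is forced when $\Y$ steps up, and when $\Y$ steps down the inequality $Z_i-Z_{i-1}\ge -1$ is trivial), hence $Z_m-Z_n\ge Y_m-Y_n$ for all $m\ge n$. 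Consequently $\min_{m\ge N}Y_m>\max_{n<N}Y_n$ immediately gives $\min_{m\ge N}Z_m>\max_{n<N}Z_n$; no discussion of records or of when exactly $\Y$ steps right is needed.

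Two of the parenthetical claims in your sketch of this first step are in fact false and would not survive if you tried to flesh them out. It is not true that ``$\zb$ can only increase at times when $\Y$ increases'': hypothesis (i) is one-directional, and $\zb$ may well step forward at a time when $\Y$ steps back. Nor is it true in general that ``$\zb$ after $\tau_i$ is confined to the levels visited by $\Y$ after $\tau_i$.'' Fortunately neither is load-bearing, because you also state the correct observation (downward excursions of $\zb$ between consecutive right-steps of $\Y$ are dominated by those of $\Y$), which is precisely the pointwise inequality above. The rest of your proof --- the regeneration-block SLLN with sandwiching between consecutive $\tau_k$, positivity of $v'$ from $Z'_{\tau_1}\ge 1$, and the strict inequality $v>v'$ from (iii) together with the common normalising factor $\overline{\E}[\tau_1]$ --- is essentially the paper's argument.
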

We will apply the above result to walks of the form $Z_n=|X_n|$, where $X_n$ is a walk on $\T_d$.

Let $\mathfrak{B}=\left\{ 0\le i<\tau_1: Y_{i+1}-Y_i=-1\right\}$ denote the set of times before $\tau_1$ when $\Y$ takes a step back.  For two random walks $\X',\X$ on $\T_d$, let $\delta(X',X)=\inf\left\{i\le\tau_1:\ X'_i\neq X_i\right\}$.  Let $\overline{\P}(\cdot)=\P(\cdot|\mc{D}_0)$.  We will prove the following result.

\begin{LEM}
\label{lem:tree_walks}
Let $\X, \X'$ be two nearest neighbour walks on $\T_d$, and $\Y$ a biased random walk on $\Z$ such that $\zb=|\X|$ and $\zb'=|\X'|$ and $\Y$ satisfy the assumptions of Lemma \ref{lem:super0}(i),(ii).  Suppose also that $\overline{\P}\big(|\mathfrak{B}|=1, |X_{\delta}|-|X'_{\delta}|<0\big)=0$ and 
\begin{align}
\overline{\P}\big(|\mathfrak{B}|=1, |X_{\tau_1}|-|X'_{\tau_1}|\ge 1\big)>\sum_{k=2}^{\infty}2k\overline{\P}(|\mathfrak{B}|=k,|X_{\tau_1}|-|X'_{\tau_1}|\ne 0).\label{trousers11}
\end{align}
Then (iii) of Lemma \ref{lem:super0} holds, and in particular $v>v'$.
\end{LEM}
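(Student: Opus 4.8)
The plan is to verify hypothesis (iii) of Lemma~\ref{lem:super0} for the pair $\zb=|\X|$, $\zb'=|\X'|$, that is, $\overline{\E}\big[|X_{\tau_1}|-|X'_{\tau_1}|\big]>0$; since assumptions (i)--(ii) of Lemma~\ref{lem:super0} are given, that lemma then yields $v>v'>0$ at once. Write $D_n:=|X_n|-|X'_n|$, so $D_0=0$. Two elementary facts drive everything. By Lemma~\ref{lem:super0}(i), whenever $Y_{n+1}-Y_n=1$ we have $D_{n+1}=D_n$, so the gap is frozen on forward steps of $\Y$; and whenever $Y_{n+1}-Y_n=-1$, each of $\X,\X'$ moves to a neighbour of its current vertex, so $|D_{n+1}-D_n|\le 2$. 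Hence $(D_n)_{n\le\tau_1}$ moves only at the $|\mathfrak{B}|$ times in $\mathfrak{B}$, by at most $2$ each, so $D_{\tau_1}\ge -2|\mathfrak{B}|$; as $\Y$ has positive drift, $\tau_1$ (hence $|\mathfrak{B}|$) has exponential tails under $\overline{\P}$, so all expectations below are finite. Now split
\[
\overline{\E}[D_{\tau_1}]=\sum_{k\ge 0}\overline{\E}\big[D_{\tau_1}\1_{|\mathfrak{B}|=k}\big].
\]
The $k=0$ term is $0$, since with no backward step of $\Y$ before $\tau_1$ the gap never moves and $D_{\tau_1}=0$. For $k\ge 2$, the worst-case bound $D_{\tau_1}\1_{|\mathfrak{B}|=k}\ge -2k\,\1_{|\mathfrak{B}|=k,\,D_{\tau_1}\ne 0}$ gives $\overline{\E}\big[D_{\tau_1}\1_{|\mathfrak{B}|=k}\big]\ge -2k\,\overline{\P}\big(|\mathfrak{B}|=k,\,|X_{\tau_1}|-|X'_{\tau_1}|\ne 0\big)$.

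The substance is the $k=1$ term. On $\{|\mathfrak{B}|=1\}$ let $b$ be the unique backward step of $\Y$ before $\tau_1$; then $D_n=0$ for $n\le b$, and, since all steps after $b$ are forward for $\Y$, the gap stays frozen, so $D_{\tau_1}=D_{b+1}$. Moreover the walks $\X$ and $\X'$ can separate only at a backward step of $\Y$ (this is the decoupling mechanism of the coupling to which the lemma is applied), hence they coincide up to time $b$; the first separation time is therefore $\delta=b+1$ whenever they separate at all, and $|X_\delta|-|X'_\delta|=D_{b+1}=D_{\tau_1}$ (and $D_{\tau_1}=0$ if they never separate). The first hypothesis thus reads $\overline{\P}\big(|\mathfrak{B}|=1,\,D_{\tau_1}<0\big)=0$, i.e.\ $D_{\tau_1}\ge 0$ almost surely on $\{|\mathfrak{B}|=1\}$, so
\[
\overline{\E}\big[D_{\tau_1}\1_{|\mathfrak{B}|=1}\big]\ge\overline{\P}\big(|\mathfrak{B}|=1,\,|X_{\tau_1}|-|X'_{\tau_1}|\ge 1\big).
\]
Adding the three contributions,
\[
\overline{\E}[D_{\tau_1}]\ge\overline{\P}\big(|\mathfrak{B}|=1,\,|X_{\tau_1}|-|X'_{\tau_1}|\ge 1\big)-\sum_{k\ge 2}2k\,\overline{\P}\big(|\mathfrak{B}|=k,\,|X_{\tau_1}|-|X'_{\tau_1}|\ne 0\big)>0
\]
by \eqref{trousers11}. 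This establishes (iii) of Lemma~\ref{lem:super0}, and hence $v>v'$.

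I expect the delicate point to be the identification $|X_\delta|-|X'_\delta|=D_{\tau_1}$ on $\{|\mathfrak{B}|=1\}$, which is the bridge that turns the hypothesis on the gap at the separation time into control of the gap at the regeneration time. It rests on knowing that $\X$ and $\X'$ cannot drift apart except at a backward step of $\Y$, so that on a regeneration block containing exactly one backward step they coincide right up to that step, acquire their gap precisely there, and carry it unchanged to $\tau_1$. Everything for $k\ge 2$ is intentionally wasteful (hence the $2k$ factors), so what \eqref{trousers11} really encodes is that a separation at the \emph{first} backward step of a block carries a net positive push in favour of $\X$, large enough to absorb the rare separations occurring in blocks with more backward steps.
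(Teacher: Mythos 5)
Your proof is correct and follows essentially the same route as the paper: the same decomposition of $\overline{\E}\bigl[|X_{\tau_1}|-|X'_{\tau_1}|\bigr]$ over the events $\{|\mathfrak{B}|=k\}$, the same crude bound $|X_{\tau_1}|-|X'_{\tau_1}|\ge -2k$ for $k\ge 2$ (since the gap moves by at most $2$ at each backward step of $\Y$ and is frozen on forward steps), and the same use of the first hypothesis to force $|X_{\tau_1}|-|X'_{\tau_1}|\ge 0$ on $\{|\mathfrak{B}|=1\}$, giving the lower bound that \eqref{trousers11} makes positive. You are a bit more explicit than the paper about the bridge from $|X_\delta|-|X'_\delta|$ to $|X_{\tau_1}|-|X'_{\tau_1}|$ on $\{|\mathfrak{B}|=1\}$, correctly noting that this identification uses the (coupling-specific, and tacitly used by the paper as well) fact that $\X,\X'$ can decouple only at a backward step of $\Y$; flagging this is useful but does not change the argument.
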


The following is \cite[Lemma 4.1]{BAFS}, and states that $|\mathfrak{B}|=k \Rightarrow \tau_1\le 3k+1$, almost surely.
\begin{LEM}\label{majtau}
$\op(\{|\mathfrak{B}|=k\}\setminus \{\tau_1\le 3k+1\})=0$.
\end{LEM}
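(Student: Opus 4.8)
The plan is to notice that the claimed bound is purely deterministic once the trajectory of $\Y$ is fixed: since $p>1/2$ we have $\tau_1<\infty$ $\op$-almost surely, and on that event I will show that \emph{every} nearest-neighbour path with $|\mathfrak{B}|=k$ satisfies $\tau_1\le 3k+1$. So the whole statement reduces to a combinatorial fact about $\pm1$ paths on $\Z$ (with $Y_0=0$), and no probabilistic input beyond $\tau_1<\infty$ is needed.

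The first step is to record the elementary identity $\tau_1 = M + 2k + 1$, where $M:=\max_{0\le n<\tau_1}Y_n$. This comes from two observations. On the one hand, the up-steps and down-steps taken strictly before time $\tau_1$ add up to $\tau_1$ and their difference equals $Y_{\tau_1}-Y_0=Y_{\tau_1}$; since the down-steps are precisely indexed by $\mathfrak{B}$, this gives $Y_{\tau_1}=\tau_1-2k$. On the other hand, at the regeneration time $\tau_1$ one has $Y_{\tau_1}=M+1$: indeed $Y_{\tau_1}\ge\min_{m\ge\tau_1}Y_m>\max_{n<\tau_1}Y_n=M$ forces $Y_{\tau_1}\ge M+1$, while $Y_{\tau_1}\le Y_{\tau_1-1}+1\le M+1$. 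Combining, $\tau_1=M+2k+1$.

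The crux is therefore the inequality $M\le k$, which I would prove by producing $M$ distinct backward steps before $\tau_1$. Let $R_1<R_2<\dots<R_M$ be the successive record times of $\Y$, i.e.\ $R_\ell$ is the first time $\Y$ reaches level $\ell$; by definition of $M$ each $R_\ell$ occurs strictly before $\tau_1$, and each $R_\ell\ge\ell\ge1$ is strictly positive. By minimality of $\tau_1$ among strictly positive regeneration times, no $R_\ell$ is itself a regeneration time, so for each $\ell\in\{1,\dots,M\}$ there exists $m\ge R_\ell$ with $Y_m\le\max_{n<R_\ell}Y_n=\ell-1$; moreover $m<\tau_1$, because for $n\ge\tau_1$ we have $Y_n\ge Y_{\tau_1}=M+1>\ell-1$. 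Hence somewhere in $(R_\ell,\tau_1)$ the walk makes a down-step across the edge $\{\ell-1,\ell\}$. As $\ell$ ranges over $\{1,\dots,M\}$ these crossings use pairwise distinct edges, so they are $M$ pairwise distinct down-steps, all occurring before $\tau_1$; thus $k=|\mathfrak{B}|\ge M$. Plugging $M\le k$ into $\tau_1=M+2k+1$ yields $\tau_1\le 3k+1$.

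The only delicate points — and the closest thing to an obstacle — are bookkeeping: verifying that the record times $R_\ell$ really lie before $\tau_1$ and cannot be regeneration times, and confirming that the $M$ down-crossings produced are honestly distinct steps (they are, since each crosses a distinct edge $\{\ell-1,\ell\}$), so that they contribute exactly $M$ to $|\mathfrak{B}|$ without over- or under-counting. Everything else is a routine check of signs and step counts.
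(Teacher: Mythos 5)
Your proof is correct. Note first that the paper itself does not prove this lemma: it simply cites it as Lemma~4.1 of Ben Arous--Fribergh--Sidoravicius \cite{BAFS}, so there is no ``paper's own proof'' to compare against, and a self-contained argument such as yours is a genuine addition.

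The two pillars of your argument hold up. The identity $\tau_1 = M + 2k + 1$ (with $M = \max_{0\le n<\tau_1} Y_n$) follows exactly as you say from $Y_{\tau_1} = \tau_1 - 2k$ (counting $\pm 1$ steps before $\tau_1$) and $Y_{\tau_1} = M+1$ (sandwich: $Y_{\tau_1} > M$ because $\tau_1$ is a regeneration time, and $Y_{\tau_1} \le Y_{\tau_1 - 1} + 1 \le M+1$). For the inequality $M \le k$: each record time $R_\ell$, $\ell \in \{1,\dots,M\}$, is a strictly positive time lying strictly before $\tau_1$, hence by minimality of $\tau_1$ it is not a regeneration time; thus $\min_{m\ge R_\ell} Y_m \le \max_{n<R_\ell} Y_n = \ell-1$, and since $Y_n \ge M+1 > \ell-1$ for all $n\ge\tau_1$, the witnessing $m$ lies in $(R_\ell,\tau_1)$. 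Taking the last time $s\in[R_\ell,m)$ with $Y_s\ge\ell$ produces a down-step across the edge $\{\ell-1,\ell\}$ at a time $s\in\mathfrak{B}$; distinct $\ell$ give distinct edges, hence distinct elements of $\mathfrak{B}$, so $k\ge M$. Combining, $\tau_1 = M+2k+1 \le 3k+1$ deterministically whenever $\tau_1<\infty$, which is $\op$-a.s.\ since $p>1/2$; the degenerate case $M=0$ (forcing $k=0$ and $\tau_1=1$) is covered vacuously. The argument is tight: the path $0,1,0,1,2,3,\dots$ gives $k=1$, $M=1$, $\tau_1=4=3k+1$.
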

As a result of Lemma \ref{majtau} we see that
\[\op(|\mathfrak{B}|=k,|X_{\tau_1}|-|X'_{\tau_1}|\ne 0)=\op(|\mathfrak{B}|=k, \tau_1\le 3k+1,|X_{\tau_1}|-|X'_{\tau_1}|\ne0).\]
The right hand side is bounded above by the probability of $\Y$ taking at least $k$ backward steps in the second through $3k$-th steps (the first and last steps are a.s.~positive under $\op$ by definition of $\tau_1$), with at least one resulting in a decoupling.  For our coupling of reinforced random walks with reinforcement parameters $\beta'>\beta>0$ this event will have  probability of order $C(1-p)^{k} (\beta'-\beta)$, giving that the right hand side of \eqref{trousers11} is of order $(1-p)^2(\beta'-\beta)$.  On the other hand the left hand side of \eqref{trousers11} will be of the order of $(1-p)(\beta'-\beta)$.  Thus in our setting we will be able to choose a coupling satisfying the requirements of Lemma \ref{lem:tree_walks} with $p$ very close to 1.



\subsubsection{Proof of Lemma \ref{lem:super0}} 
We first show that (i) of the lemma implies that any regeneration time for $\Y$ is also  a regeneration time for $\zb$ and $\zb'$.  To see this note that for any $N\in \N$,
\begin{align*}
\min_{m\ge N}Y_N>\max_{n<N}Y_n
\quad\Leftrightarrow \quad& Y_m-Y_n>0, && \forall n<N, m\ge N\\
\Leftrightarrow\quad& \sum_{i=n+1}^{m} (Y_{i}-Y_{i-1})>0, && \forall n<N, m\ge N \\
\Rightarrow\quad& \sum_{i=n+1}^{m} (Z_i-Z_{i-1})>0, && \forall n<N, m\ge N\\
\Rightarrow\quad&  Z_m-Z_n>0, && \forall n<N, m\ge N\\
\Leftrightarrow\quad& \min_{m\ge N}Z_m>\max_{n<N}Z_n.
\end{align*}

Since $\Y$ is a biased random walk, we have that \[\P(0 \in \mc{D}_Y)=\P(Y_n\ge 0,\, \forall n)=\frac{2p-1}{p}>0.\]
Moreover, $(\tau_{i+1}-\tau_i)_{i \in \N}$ are i.i.d.~random variables with finite mean \[\E[\tau_{2}-\tau_1]=
\overline{\E}[\tau_1]<\infty,\]
and $\E[\tau_1]<\infty$.  By (ii) we have that $((Z_{\tau_{i+1}}-Z_{\tau_i}, \tau_i))_{i \in \N}$ are i.i.d., and since $\zb$ is a nearest-neighbour walk we have that $\E[Z_{\tau_{1}}]\le \E[\tau_1]<\infty$, and similarly $\E[Z_{\tau_{2}}-Z_{\tau_1}]\le \E[\tau_2-\tau_1]<\infty$.

Thus, by standard regeneration arguments we get that 
\begin{align}
n^{-1}Z_n\ra v=\frac{\overline{\E}[Z_{\tau_1}]}{\overline{\E}[\tau_1]}, \qquad \textrm{and}\qquad n^{-1}Z'_n\ra v'=\frac{\overline{\E}[Z'_{\tau_1}]}{\overline{\E}[\tau_1]}, \qquad \textrm{almost surely},
\end{align}
with $v,v'>0$.  The fact that $v>v'$ is now immediate from (iii).\hfill\qed

\subsubsection{Proof of Lemma \ref{lem:tree_walks}}
Note that
\begin{align}
\overline{\E}\left[|X_{\tau_1}|-|X'_{\tau_1}|\right]&=\overline{\E}\left[(|X_{\tau_1}|-|X'_{\tau_1}|) \indic{|\mathfrak{B}|=1}\right]\\
&\qquad +\sum_{k=2}^{\infty}\overline{\E}\left[(|X_{\tau_1}|-|X'_{\tau_1}|) \indic{|\mathfrak{B}|=k}\right].
\end{align}

Since $|\X|$ and $|\X'|$ both increase by 1 whenever $\Y$ does, $|X|-|X'|$ can only change (by at most 2) when $\Y$ takes a backward step.  Thus on the event $\{|\mathfrak{B}|=k\}$ we have $|X_{\tau_1}|-|X'_{\tau_1}| \ge -2k$.  On the other hand, by assumption, 
$\overline{\P}\big(|\mathfrak{B}|=1, |X_{\delta}|-|X'_{\delta}|<0\big)=0$.  Therefore,
\begin{align}
\overline{\E}\left[|X_{\tau_1}|-|X'_{\tau_1}|\right]&\ge 
\overline{\P}\big(|\mathfrak{B}|=1, |X_{\tau_1}|-|X'_{\tau_1}|\ge1\big)\\
&\qquad -\sum_{k=2}^{\infty}2k\overline{\P}\left(|\mathfrak{B}|=k, |X_{\tau_1}|-|X'_{\tau_1}|\ne0\right).
\end{align}
and the result follows.\hfill \qed

\subsection{Application to the multiplicative case} \label{proofmult}

Here we detail the proof in the multiplicative case. Let us first indicate the key points of the argument.

As explained at the beginning of Section \ref{sec:mono}, the general idea is to compare three walks: a one-dimensional walk ${\bf Y}$ as well as two once-reinforced random walks $\Xb$ and $\Xe$ on $\T_d$ with respective reinforcement parameter $\beta$ and $\beta+\vep$. We will couple these three walks such that:
\begin{itemize}
\item ${\bf Y}$ has a strong bias towards the infinity, hence has regeneration times;
\item whenever ${\bf Y}$ steps forward, the three walks step forward;
\item every time ${\bf Y}$ steps back, the walk $\Xb$ and $\Xe$ increase their distance by at most $2$.
\end{itemize}
These points allow us to prove a result similar to Lemma \ref{almadelta}. Moreover, we will require that:
\begin{itemize}
\item on the event on which ${\bf Y}$ steps back only once before $\tau_1$, we a.s.~have that $|\xb_{\tau_1}|-|\xe_{\tau_1}|\ge0$;
\item the event on which ${\bf Y}$ steps back only once before $\tau_1$ and $|\xb_{\tau_1}|-|\xe_{\tau_1}|\ge1$, or any constant, happens with a probability roughly $\vep/\alpha d$, see Lemma \ref{mind1};
\item the event on which ${\bf Y}$ steps back $k$ times before $\tau_1$  happens with a probability roughly at most $\vep k(\alpha d)^{-k}$, see Lemma \ref{majdk}.
\end{itemize}
If these points are satisfied, then one can prove monotonicity properties as soon as $\alpha d$ is large enough, following the argument in Section \ref{sec:end}.\\
\subsubsection{The coupling}
\label{sec:coupling}

Let us now give a detailed construction of the coupling.\\

For any $k\in\{0,\dots,d\}$,  for any $\beta,\vep>0$, let
\begin{align}
p^{(\beta)}_{k}&=\frac{\alpha\1_{\{k<d\}}}{\alpha(d+k\beta) +1+\beta},\\
\overline{p}^{(\beta)}_{k}&=\frac{\alpha(1+\beta)\1_{\{k>0\}}}{\alpha(d+k\beta) +1+\beta},\\
q^{(\beta)}_{k}&=\frac{1+\beta}{\alpha(d+k\beta) +1+\beta}.\label{multiform}
\end{align}
The above correspond to the probability of stepping to an unvisited child, a visited child, and the parent respectively, when $k$ children of the current location have been visited previously (as long as the current location is not $\r^{-1}$).  Note that because of the multiplicative form of the reinforcement, when $k=d$ (all children have been previously  visited) the two relevant probabilities no longer depend on $\beta$.  Also note that 
\begin{equation}
(d-k)p^{(\beta)}_k+k\overline{p}^{(\beta)}_{k}+q^{(\beta)}_{k}=1.\label{sum_probs}
\end{equation}
We also define
\[
\Delta^{(p)}_k=p^{(\beta)}_k-p^{(\beta+\vep)}_k, \qquad \Delta^{(q)}_k=q^{(\beta+\vep)}_k-q^{(\beta)}_k, \qquad \overline{\Delta}_k= k\left(\overline{p}^{(\beta+\vep)}_{k}-\overline{p}^{(\beta)}_{k}\right), 
\]
which are all easily checked to be non-negative.  It is also easy to check that
\begin{equation}
\Delta_k^{(q)}=\frac{\vep \alpha (d-k)}{(\alpha(d+k(\beta+\vep)) +1+\beta+\vep)(\alpha(d+k\beta) +1+\beta)}\le \Delta_0^{(q)}.\label{Dbound1}
\end{equation}
Similarly, letting $\eta=1+\beta+\alpha d$,
\begin{equation}
\overline{\Delta}_k=k\alpha \Delta_k^{(q)}\le \frac{\vep  \alpha^2 k(d-k)}{(\eta+\vep)\eta}\le \frac{\vep  \alpha^2 d^2}{4(\eta+\vep)\eta},\label{Dbound2}
\end{equation}
where we have used the fact that $k(d-k)\le 4^{-1}d^2$.
From \eqref{sum_probs} we see that
\begin{align}
\label{sumdeltas}
-(d-k)\Delta^{(p)}_k+\overline{\Delta}_k+\Delta^{(q)}_k=0.
\end{align}

Recall the definition of $\mc{E}_\varnothing(n)$, which refers to the process $\Xb$.
For any $x\in\T_d$, $\beta\ge 0$,
let $C^{(\beta)}_n(x)$ be the number of children of $x$ visited by $\Xb$ up to time $n$, i.e.
\[
C^{(\beta)}_n(x)=\sum_{i=1}^d\indic{[x,x_i]\in \mc{E}_\varnothing(n)}.
\]
Let $K_n^{(\beta)}=C^{(\beta)}_n(\xb_n)$.
For $x\in\T_d$, and $i$ such that $1\le i\le C^{(\beta)}_n(x)$ we let $x^{\sss(\beta)}_{n,i}$ denote the $i$-th child of $x$ visited by the walk $(\xb_k)_{k\le n}$ (in order of visitation).   For $i$ such that $C^{(\beta)}_n(x)<i\le d$ we choose some fixed but arbitrary labelling $x^{\sss(\beta)}_{n,i}$.

{We now turn to the construction of the coupling.} Let $(U_i)_{i\ge1}$ be an i.i.d.~collection of uniform random variables on $[0,1]$. We define the coupling inductively, and include some figures to aid the understanding.  Figure \ref{fig:coupling} depicts the subintervals in which each of the scenarios (c1)-(c5) below apply.  In the figures below, red edges denote reinforced edges, the blue arrow indicates the move  of the walk $\Xe$, and the green arrow indicates the move of the walk $\Xb$.  Moves to the right are away from the root.  We set $x=\xb_n$ and $y=\xe_n$. {We first describe the case when $\Xb$ and $\Xe$ have visited the same number of visited children: this will be the main contribution, and the other case will become an error term in our computations.}

\begin{figure}
\begin{center}
\includegraphics[scale=1]{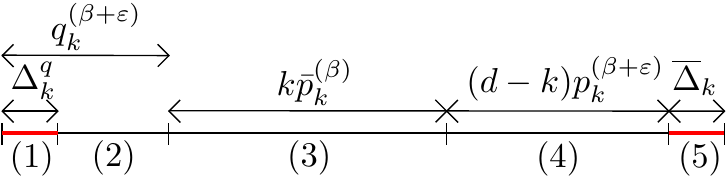}
\end{center}
\caption{A depiction of the coupling (c1)-(c5) of Section \ref{sec:coupling}}
\label{fig:coupling}
\end{figure}

Suppose that 
 $K_n^{(\beta+\vep)}=K_n^{(\beta)}=k$, where $k\in \{0,1,\dots, d\}$, and that $\r^{-1}\notin \{x,y\}$.
Then 
\begin{enumerate}
\item[(c1)] If $U_{n+1}\in( \frac{i-k-1}{d-k}\Delta^{(q)}_k, \frac{i-k}{d-k}\Delta^{(q)}_k]$ and if $k<d$, for $i\in\{k+1,...,d\}$, then $\xe_{n+1}=\parent{y}$, $\xb_{n+1}=x^{\sss(\beta)}_{n,i}$;\\
\[\]
\includegraphics[scale=.5]{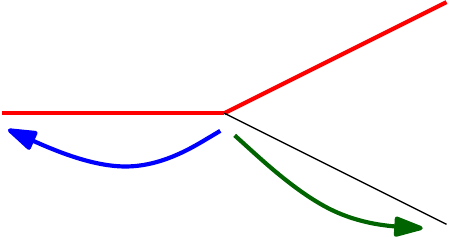}

\item[(c2)] If $U_{n+1}\in( \Delta^{(q)}_k, q^{(\beta+\vep)}_k]$ then $\xe_{n+1}=\parent{y}$ and $\xb_{n+1}=\parent{x}$;\\
\[\]
\includegraphics[scale=.5]{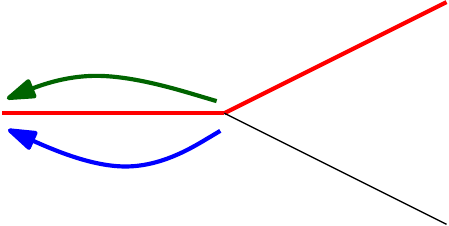}

\item[(c3)] If $U_{n+1}\in( q^{(\beta+\vep)}_k+(i-1)\overline{p}_k^{(\beta)},q^{(\beta+\vep)}_k+i\overline{p}_k^{(\beta)}]$, for $i\in\{1,\dots,k\}$, and if $k>0$ then $\xe_{n+1}=y^{\sss(\beta+\vep)}_{n,i}$ and $\xb_{n+1}=x^{\sss(\beta)}_{n,i}$;\\
\[\]
\includegraphics[scale=.5]{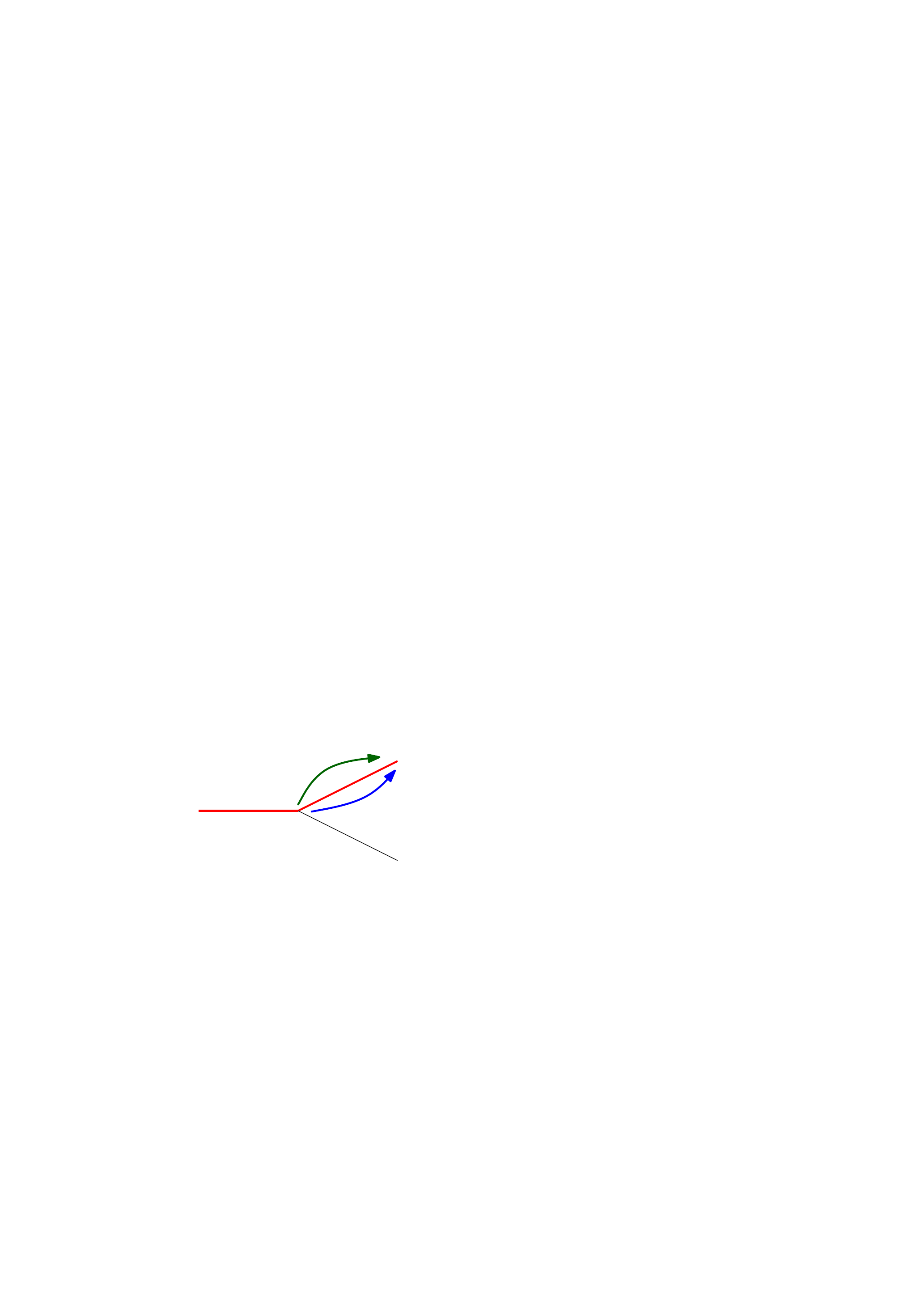}

\item[(c4)] If $U_{n+1}\in(q^{(\beta+\vep)}_k+k\overline{p}_k^{(\beta)}+(i-k-1)p_k^{(\beta+\vep)},q^{(\beta+\vep)}_k+k\overline{p}_k^{(\beta)}+(i-k)p_k^{(\beta+\vep)} ]$, for $i\in\{k+1,...,d\}$, and if $k<d$ then $\xe_{n+1}=y^{\sss(\beta+\vep)}_{n,i}$ and $\xb_{n+1}=x^{\sss(\beta)}_{n,i}$;\\
\[\]
\includegraphics[scale=.5]{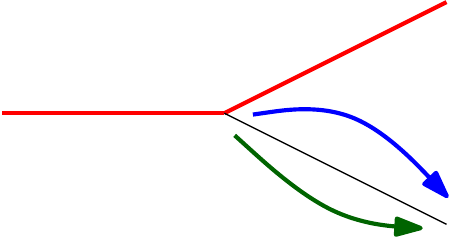}

\item[(c5)] If $U_{n+1}\in(q^{(\beta+\vep)}_k+k\overline{p}_k^{(\beta)}+(d-k)p_k^{(\beta+\vep)},1 ]=\left(1-\overline{\Delta}_k,1 \right]$ and if $0<k<d$ then
\begin{itemize}
\item[(a)] if $U_{n+1}\in(1-(1-\frac{i-1}{k})\overline{\Delta}_k,1-(1-\frac{i}{k})\overline{\Delta}_k ]$, for $i\in\{1,\dots,k\}$, then $\xe_{n+1}=y^{\sss(\beta+\vep)}_{n,i}$;
\item[(b)] if $U_{n+1}\in(1-(1-\frac{i-k-1}{d-k})\overline{\Delta}_k,1-(1-\frac{i-k}{d-k})\overline{\Delta}_k ]$, for $i\in\{k+1,\dots,d\}$, then $\xb_{n+1}=x^{\sss(\beta)}_{n,i}$.
\end{itemize}
\[\]
\includegraphics[scale=.5]{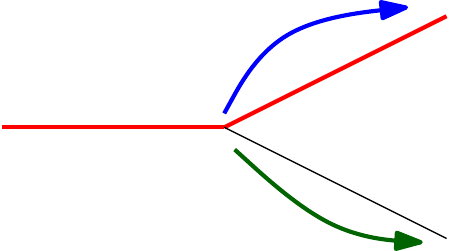}
\end{enumerate}
In the case where $k=d$ above neither condition (c4) nor (c5) can hold (as both require $k<d$), but the upper endpoint of the interval $(q^{(\beta+\vep)}_k+(i-1)\overline{p}_k^{(\beta)},q^{(\beta+\vep)}_k+i\overline{p}_k^{(\beta)}]$ is 1 when $i=d$.

From (c1) and (c2), we have that $\Xe$ steps to its parent when $U_{n+1}\in(0,q_k^{(\beta+\vep)}]$, which has probability $q_k^{(\beta+\vep)}$ as required.  Similarly from (c2) $\xb$ steps to its parent when $U_{n+1}\in( \Delta^{(q)}_k, q^{(\beta+\vep)}_k]$, which has probability $q_k^{(\beta)}$ as required.

From (c3), if $k=d$ then $\Xe$ steps to a child (they have all been previously visited) $y_{n,i}^{(\beta+\vep)}$ if $U_{n+1}\in ( q^{(\beta+\vep)}_k+(i-1)\overline{p}_k^{(\beta)},q^{(\beta+\vep)}_k+i\overline{p}_k^{(\beta)}]$, which has probability $\overline{p}_k^{(\beta)}=\frac{\alpha}{d\alpha+1}$ as required, and the same is true for $\Xb$.

From (c3) and (c5)(a), if $0<k<d$ then $\Xe$ steps to a previously visited child $y_{n,i}^{(\beta+\vep)}$ if 
\begin{align}
U_{n+1}\in  ( q^{(\beta+\vep)}_k+(i-1)\overline{p}_k^{(\beta)},q^{(\beta+\vep)}_k+i\overline{p}_k^{(\beta)}]\cup (1-(1-\frac{i-1}{k})\overline{\Delta}_k,1-(1-\frac{i}{k})\overline{\Delta}_k ].
\end{align}
This has probability $\overline{p}_k^{(\beta)}+\frac{1}{k}\overline{\Delta}_k=\overline{p}_k^{(\beta+\vep)}$ as required.  Similarly, from (c3), if $0<k<d$ then $\xb$ steps to a previously visited child $x_{n,i}^{(\beta+\vep)}$ if 
\begin{align}
U_{n+1}\in  ( q^{(\beta+\vep)}_k+(i-1)\overline{p}_k^{(\beta)},q^{(\beta+\vep)}_k+i\overline{p}_k^{(\beta)}]
\end{align}
which has probability $\overline{p}_k^{(\beta)}$ as required.

From (c1), (c4) and (c5)(b), if $0<k<d$ then $\xb$ steps to a previously unvisited child $x_{n,i}^{(\beta+\vep)}$ if 
\begin{align}
U_{n+1}\in &\Big( \frac{i-k-1}{d-k}\Delta^{(q)}_k, \frac{i-k}{d-k}\Delta^{(q)}_k\Big]\\
&\cup 
\Big(q^{(\beta+\vep)}_k+k\overline{p}_k^{(\beta)}+(i-k-1)p_k^{(\beta+\vep)},q^{(\beta+\vep)}_k+k\overline{p}_k^{(\beta)}+(i-k)p_k^{(\beta+\vep)} \Big]\\
&\cup \Big(1-(1-\frac{i-k-1}{d-k})\overline{\Delta}_k,1-(1-\frac{i-k}{d-k})\overline{\Delta}_k \Big],
\end{align}
which has probability $p_k^{(\beta+\vep)}+\frac{1}{d-k}(\Delta^{(q)}_k+\overline{\Delta}_k)$.  From \eqref{sumdeltas} this is equal to $p_k^{(\beta)}$, as required.  Similarly from (c4) $\Xe$ steps to any previously unvisited child $y_{n,i}^{(\beta+\vep)}$ with probability $p_k^{(\beta+\vep)}$ as required.

\bigskip

Otherwise either $K_n^{\beta}\ne K_n^{\beta+\vep}$ or $\r^{-1}\in \{x,y\}$.  If $y=\r^{-1}$, then $\xe_{n+1}=\r$ and similarly if $x=\r^{-1}$ then $\xb_{n+1}=\r$.  Otherwise,
\begin{enumerate}
\item[(c6)] Suppose that $K_n^{\beta+\vep}=k$.
\begin{enumerate}
\item[(a)] If $U_{n+1}\le q^{(\beta+\vep)}_{k}$ then $\xe_{n+1}=\parent{y}$;
\item[(b)] if $k>0$ and if \[U_{n+1}\in\left( q^{(\beta+\vep)}_{k} + (i-1)\overline{p}_{k}^{(\beta+\vep)},q^{(\beta+\vep)}_{k} + i\overline{p}_{k}^{(\beta+\vep)}\right]\] for $i\in\{1,\dots,k\}$ then $\xe_{n+1}=y^{\sss(\beta+\vep)}_{n,i}$;
\item[(c)] if $k<d$ and if 
\[U_{n+1}\in\left( q^{(\beta+\vep)}_{k} + k\overline{p}_{k}^{(\beta+\vep)}+(i-k-1){p}_{k}^{(\beta+\vep)},q^{(\beta+\vep)}_{k} + k\overline{p}_{k}^{(\beta+\vep)}+(i-k){p}_{k}^{(\beta+\vep)}\right]\]
for $i\in\{k+1,\dots,d\}$ then $\xe_{n+1}=y^{\sss(\beta+\vep)}_{n,i}$;
\end{enumerate}
\item[(c7)] Suppose that $K_n^{\beta}=k'$
\begin{enumerate}
\item[(a)] If $U_{n+1}\le q^{(\beta)}_{k'}$ then $\xb_{n+1}=\parent{x}$; 
\item[(b)] if $k'>0$  and if $U_{n+1}\in\left( q^{(\beta)}_{k'} + (i-1)\overline{p}_{k'}^{(\beta)},q^{(\beta)}_{k'} + i\overline{p}_{k'}^{(\beta)}\right] $ for $i\in\{1,\dots,k'\}$ then $\xb_{n+1}=x^{\sss(\beta)}_{n,i}$; 
\item[(c)] if $k'<d$ and if 
\[U_{n+1}\in\left( q^{(\beta)}_{k'} + k'\overline{p}_{k'}^{(\beta)}+(i-k'-1){p}_{k'}^{(\beta)},q^{(\beta)}_{k'} + k'\overline{p}_{k'}^{(\beta)}+(i-k'){p}_{k'}^{(\beta)}\right]\]
 for $i\in\{k'+1,\dots,d\}$ then $\xb_{n+1}=x^{\sss(\beta)}_{n,i}$.
\end{enumerate}
\end{enumerate}
From cases (a) above, the probability of moving to one's parent is $q^{(\beta+\vep)}_{k}$ for $\Xe$, and $q^{(\beta)}_{k'}$ for $\Xb$  as required.

From cases (b) above, the probability of moving to a particular previously visited child is $\overline{p}_{k}^{(\beta+\vep)}$ for $\Xe$ and $\overline{p}_{k'}^{(\beta)}$ for $\Xb$, as required.

From cases (c) above,  the probability of moving to a particular previously unvisited child is $p_{k}^{(\beta+\vep)}$ for $\Xe$ and $p_{k'}^{(\beta)}$ for $\Xb$ as required.

\begin{REM}
We have now verified that the above is a legitimate coupling of walks $\Xb$ and $\Xe$ with the correct marginal distributions.
\end{REM}

We now enrich the above coupling with a random walk $\Y$ on $\Z$, by setting for any $n\in\N$,
\begin{equation}\label{defY}
Y_n=\sum_{i=1}^n\left(\1_{\{U_i> q^{(\beta+\vep)}_0\}}-\1_{\{U_i\le q^{(\beta+\vep)}_0\}}\right).
\end{equation}
Let $\mc{F}_n=\sigma(U_k:k\le n)$, so that $\left(\mathcal{F}_n\right)_{n \in \N}$ is the  natural filtration generated by the sequence $(U_n)_{n \in \N}$ and note that all three walks are measurable with respect to this filtration. 
Also, define the probability that $0$ is a regeneration time for $Y$ as soon as $\alpha d>1+\beta+\vep$, that is
\[
p_\infty={\P}(\mc{D}_0)=\frac{\alpha d-(1+\beta+\vep)}{\alpha d}.
\]
\subsubsection{Checking the requirements of Lemma \ref{lem:super0}}
Here, we check that the three walks defined in the previous Section on the same probability space fulfill the requirements of Lemma \ref{lem:super0} $(i)$ and $(ii)$ and thus those of Lemma \ref{lem:tree_walks}.
\begin{LEM}
\label{lem:super}
For any $n$ such that $Y_{n+1}=Y_n+1$, $|\xb_{n+1}|=|\xb_{n}|+1$ and $|\xe_{n+1}|=|\xe_{n}|+1$.
Moreover, the regeneration times of $\Y$ are also regeneration time for $|\Xb|$ and $|\Xe|$, and they induce an i.i.d.~structure. In other words, $\Y$, $\Xb$ and $\Xe$ fulfill the requirements of Lemma \ref{lem:super0} $(i)$ and $(ii)$ and Lemma \ref{lem:tree_walks} can be applied.
\end{LEM}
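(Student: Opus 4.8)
The plan is to check hypotheses (i) and (ii) of Lemma \ref{lem:super0} for the triple $(\Y,\Xb,\Xe)$ built in Section \ref{sec:coupling}, since Lemma \ref{lem:tree_walks} was already derived from those two hypotheses. Throughout one uses the standing assumption $\alpha d>1+\beta+\vep$, under which $\Y$ is a nearest-neighbour walk on $\Z$ with forward probability $1-q^{(\beta+\vep)}_0>1/2$, as Lemma \ref{lem:super0} requires. The one elementary input I will invoke repeatedly is the monotonicity of the parent-probability read off from \eqref{multiform}: $k\mapsto q^{(\beta)}_k$ is non-increasing and $\beta\mapsto q^{(\beta)}_k$ is non-decreasing, so that $q^{(\beta)}_{k'}\le q^{(\beta)}_0\le q^{(\beta+\vep)}_0$ and $q^{(\beta+\vep)}_k\le q^{(\beta+\vep)}_0$ for all $k,k'$, while $\Delta^{(q)}_k\le\Delta^{(q)}_0=q^{(\beta+\vep)}_0-q^{(\beta)}_0<q^{(\beta+\vep)}_0$ by \eqref{Dbound1}.

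For (i), the first step is to recall from \eqref{defY} that $Y_{n+1}-Y_n=+1$ exactly when $U_{n+1}>q^{(\beta+\vep)}_0$, and then to run through the branches of the coupling. When $K^{(\beta+\vep)}_n=K^{(\beta)}_n=k$ and $\r^{-1}\notin\{\xb_n,\xe_n\}$ (cases (c1)--(c5)), the subintervals prescribing (c1) lie inside $(0,\Delta^{(q)}_k]$ and those prescribing (c2) inside $(\Delta^{(q)}_k,q^{(\beta+\vep)}_k]$, hence all inside $(0,q^{(\beta+\vep)}_0]$; so a forward step of $\Y$ forces us into (c3), (c4) or (c5), and in each of those three branches both $\Xb$ and $\Xe$ step to one of their children by construction. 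In the remaining situation ($K^{(\beta)}_n\neq K^{(\beta+\vep)}_n$ or $\r^{-1}\in\{\xb_n,\xe_n\}$): if $\xb_n=\r^{-1}$ then $\xb_{n+1}=\r$ and the distance rises from $-1$ to $0$; otherwise $\Xb$ obeys (c7), where it can step to its parent only when $U_{n+1}\le q^{(\beta)}_{k'}$ with $k'=K^{(\beta)}_n$, which is impossible since $q^{(\beta)}_{k'}\le q^{(\beta+\vep)}_0<U_{n+1}$, so $\Xb$ steps to a child; the same argument with (c6) and $q^{(\beta+\vep)}_k\le q^{(\beta+\vep)}_0$ handles $\Xe$. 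Thus on every forward step of $\Y$ one has $|\xb_{n+1}|=|\xb_n|+1$ and $|\xe_{n+1}|=|\xe_n|+1$, which is (i).

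For (ii), I would first invoke the implication chain in the proof of Lemma \ref{lem:super0}: once (i) holds, every regeneration time of $\Y$ is automatically a regeneration time for $|\Xb|$ and for $|\Xe|$, and since $\Y$ has positive drift the times $(\tau_i)_{i\ge1}$ are a.s.\ well defined. The key observation is then a ``fresh restart''. Fix $i$; since $\Y$ steps forward at $\tau_i$, part (i) gives $|\xb_{\tau_i}|=|\xb_{\tau_i-1}|+1$, which combined with $|\xb_{\tau_i}|>\max_{n<\tau_i}|\xb_n|$ pins down $\max_{n<\tau_i}|\xb_n|=|\xb_{\tau_i}|-1$. Hence $\xb_{\tau_i}$ is reached for the first time at $\tau_i$, the subtree of $\T_d$ hanging below it carries no previously traversed edge (such a traversal would require a visit to level $\ge|\xb_{\tau_i}|$), and after $\tau_i$ the walk $\Xb$ never returns to level $|\xb_{\tau_i}|-1$ and so remains in that subtree forever; the same holds for $\Xe$. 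By vertex-homogeneity of $\T_d$ and the fact that the three walks are all driven by the i.i.d.\ sequence $(U_k)$, each increment $\tau_{i+1}-\tau_i$, $|\xb_{\tau_{i+1}}|-|\xb_{\tau_i}|$ and $|\xe_{\tau_{i+1}}|-|\xe_{\tau_i}|$ is a fixed deterministic functional of the block $(U_{\tau_i+1},\dots,U_{\tau_{i+1}})$, not depending on $i$ nor on $\xb_{\tau_i},\xe_{\tau_i}$. Feeding this into the classical regeneration decomposition for the biased walk $\Y$ on $\Z$ (as used in the proof of Lemma \ref{lem:super0}, cf.\ \cite{Zeitouni}) yields that, conditionally on $\mc{D}_0$, these blocks are i.i.d.\ and the $i$-th one is independent of $(\tau_k:k\le i)$, which is exactly hypothesis (ii); having (i) and (ii), Lemma \ref{lem:super0} (hence Lemma \ref{lem:tree_walks}) applies with $\zb=|\Xb|$, $\zb'=|\Xe|$.

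I expect the only genuinely delicate point to be the exhaustive bookkeeping in (i): one must be sure that no branch of the rather intricate coupling (c1)--(c7) --- in particular the asymmetric branches (c6)/(c7), active when the two walks have explored different numbers of children --- lets a forward step of $\Y$ coincide with a back-step of $\Xb$ or $\Xe$, and it is precisely the monotonicity of $q^{(\beta)}_k$ in both arguments that makes this go through. Part (ii) should then be routine: it is the standard ``regeneration $=$ fresh restart on an unexplored copy of the tree'' mechanism, the only thing to be careful about being that the regeneration times of $\Y$ are not stopping times, which is handled as usual through the i.i.d.\ block decomposition rather than the strong Markov property.
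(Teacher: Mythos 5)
Your proof is correct and takes essentially the same route as the paper: part (i) by checking that every branch of the coupling (c1)--(c7) sends both $\Xb$ and $\Xe$ to a child whenever $U_{n+1}>q^{(\beta+\vep)}_0$, using that $q^{(\cdot)}_{\cdot}$ is non-increasing in the number of visited children, and part (ii) by the fresh-restart argument at regeneration times driven by the i.i.d.\ sequence $(U_k)$. Your treatment of the asymmetric branches (c6)/(c7) is in fact a little more careful than the paper's one-line remark, since you explicitly invoke the additional monotonicity $q^{(\beta)}_{k'}\le q^{(\beta+\vep)}_{k'}$ in $\beta$ (needed for (c7)(a)), which the paper leaves implicit.
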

\proof Firstly note that $\{Y_{n+1}-Y_n\ne 1\}=\{U_{n+1}\le q_0^{(\beta+\vep)}\}$.  As described above, each of $|\xe_n|$ and $|\xb_n|$ can  decrease on the next step only if $U_{n+1}\le q_k^{(\beta+\vep)}$.  Since $q_k^{(\beta+\vep)}$ is decreasing in $k$, $\{U_{n+1}\le q_k^{(\beta+\vep)}\}\subset\{U_{n+1}\le q_0^{(\beta+\vep)}\}$ and Lemma \ref{lem:super0} $(i)$ is satisfied, which implies that the regeneration times of $\Y$ are regeneration times for $\Xb$ and $\Xe$.\\
As $\Xb$ and $\Xe$ reach a new maximum at time $\tau_i$, never go back below $X_{\tau_i}^{\beta}$ and $X_{\tau_i}^{\beta+\vep}$, and because the steps of these walks depend only on the local environment, it is clear that the item $(ii)$ of Lemma \ref{lem:super0} holds as well.\hfill
\qed



\blank{
Note that, for any $N\in \N$,
\begin{eqnarray*}
&\qquad &\min_{m\ge N}Y_N>\max_{n<N}Y_n\\
&\Leftrightarrow& Y_m-Y_n>0, \qquad \forall n<N, m\ge N\\
&\Leftrightarrow& \sum_{i=n+1}^{m} \left(\1_{\{U_i> q^{(\beta+\vep)}_0\}}-\1_{\{U_i\le q^{(\beta+\vep)}_0\}}\right)>0, \qquad \forall n<N, m\ge N \\
&\Rightarrow& \sum_{i=n+1}^{m} \left(|X^{(\beta)}_i|-|X^{(\beta)}_{i-1}|\right)>0 \quad \text{ and } \quad \sum_{i=n+1}^{m} \left(|X^{(\beta+\vep)}_i|-|X^{(\beta+\vep)}_{i-1}|\right)>0, \qquad \forall n<N, m\ge N\\
&\Rightarrow&  |X^{(\beta)}_m|-|X^{(\beta)}_n|>0 \quad \text{ and } \quad|X^{(\beta+\vep)}_m|-|X^{(\beta+\vep)}_n|>0, \qquad \forall n<N, m\ge N\\
&\Leftrightarrow& \min_{m\ge N}|X^{(\beta)}_m|>\max_{n<N}|X^{(\beta)}_n| \quad \text{ and } \quad \min_{m\ge N}|X^{(\beta+\vep)}_m|>\max_{n<N}|X^{(\beta+\vep)}_n|,
\end{eqnarray*}
where the third implication follows from the first claim of the lemma. \qed

\bigskip

Let us denote by $\tau_1$ the first positive regeneration time for $Y$ and inductively define the sequence $\tau_2,\tau_3,...,$ of regeneration times for $Y$.  By Lemma \ref{lem:super} $(\tau_i)_{i \in \N}$ are super-regeneration times (i.e.~also regeneration times for $\Xb$ and $\Xe$).
Let us define $\mc{D}_0$ to be the event on which $0$ is a super-regeneration time, as define $\overline{\P}[\cdot]:=\P[\cdot|\mc{D}_0]$.\\
Note, using the fact that $Y$ is simply a biased random walk on $\Z$ with probability to jump on the right equal to $(\alpha d)/(\alpha d+1+\beta+\vep)$, these regeneration times are well defined as soon as $\alpha d>1+\beta+\vep$ and it is easily computed that
\[
\P[\mc{D}_0]=\P[Y_n\ge0,\forall n\ge0]=\frac{\alpha d-(1+\beta+\vep)}{\alpha d}=:p_\infty.
\]
Besides, $\E[\tau_1]$ and $\overline{\E}[\tau_1]$ are easily seen to be finite. Using classical arguments on regeneration times and taking advantage of the common regeneration structure, we obtain the following result.
\begin{PRP}\label{regenspeed} For any $\vep>0,\beta,\alpha>0$ such that  $\alpha d>1+\beta+\vep$, the difference between the speeds $\Xb$ and $\Xe$ is given by
\[
v(\beta)-v(\beta+\vep)=\frac{ \overline{\E}\left[\left|X_{\tau_1}^{(\beta)}\right|-\left|X_{\tau_1}^{(\beta+\vep)}\right| \right] }{   \overline{\E}\left[\tau_1\right]   }.
\]
\end{PRP}
Hence, to obtain our theorem, we have to check that $\overline{\E}\left[\left|X_{\tau_1}^{(\beta)}\right|-\left|X_{\tau_1}^{(\beta+\vep)}\right| \right] >0$.
}

\subsubsection{Decoupling events}

Define, for any $k\ge 2$,
\[
D_k=\left\{ |\xb_{\tau_1}|-|\xe_{\tau_1}|\ne 0\right\}\cap\left\{\left|\mathfrak{B}\right|=k\right\},
\]
By Lemma \ref{lem:tree_walks}, we easily obtain the following result.

\begin{LEM}\label{almadelta}
If it holds that
\begin{itemize}
\item[(i)] $\overline{\P}\big(|\mathfrak{B}|=1, |\xb_{\tau_1}|-|\xe_{\tau_1}|<0\big)=0$; and
\item[(ii)] $
\overline{\P}\big(|\mathfrak{B}|=1, |\xb_{\tau_1}|-|\xe_{\tau_1}|\ge 1\big)>\sum_{k=2}^{\infty}2k\overline{\P}(D_k),
$
\end{itemize}
then $v(\beta+\vep)<v(\beta)$.
\end{LEM}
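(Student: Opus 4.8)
The plan is to read Lemma \ref{almadelta} off from Lemma \ref{lem:tree_walks}, applied to the triple $(\Y,\Xb,\Xe)$ constructed in Section \ref{sec:coupling}, under the identification $\X=\Xb$, $\X'=\Xe$, $\zb=|\Xb|$, $\zb'=|\Xe|$, so that in the notation of Lemma \ref{lem:super0} one has $v=v(\beta)$ and $v'=v(\beta+\vep)$ (the marginal law of $\Xb$, resp.\ $\Xe$, being that of a multiplicative ORbRW with reinforcement $\beta$, resp.\ $\beta+\vep$, by the Remark following the construction). First I would note that in the regime considered ($\alpha d>1+\beta+\vep$) the event $\mc D_0$ has probability $p_\infty>0$, so $\overline\P(\cdot)=\P(\cdot\mid\mc D_0)$ is well defined, and that Lemma \ref{lem:super} already supplies assumptions (i) and (ii) of Lemma \ref{lem:super0} for $\Y$, $\Xb$, $\Xe$; hence Lemma \ref{lem:tree_walks} is available for this triple, and it remains only to match the two bulleted hypotheses of the present lemma with the two hypotheses of Lemma \ref{lem:tree_walks}.

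Matching hypothesis (ii) is immediate: by definition $D_k=\{|\mathfrak B|=k\}\cap\{|\xb_{\tau_1}|-|\xe_{\tau_1}|\neq0\}$, so $\overline\P\big(|\mathfrak B|=k,\,|\xb_{\tau_1}|-|\xe_{\tau_1}|\neq0\big)=\overline\P(D_k)$ for every $k\ge2$, the right-hand side of \eqref{trousers11} equals $\sum_{k\ge2}2k\,\overline\P(D_k)$, and hypothesis (ii) of Lemma \ref{almadelta} is then literally \eqref{trousers11}. For hypothesis (i) I would invoke the structural fact already behind the proof of Lemma \ref{lem:tree_walks}: on $\{|\mathfrak B|=1\}$ there is a unique time $m<\tau_1$ at which $\Y$ steps back, and by Lemma \ref{lem:super} both $|\Xb|$ and $|\Xe|$ increase by $1$ at every other step up to $\tau_1$; hence $n\mapsto|\xb_n|-|\xe_n|$ is identically $0$ for $n\le m$ and is constant, equal to $|\xb_{\tau_1}|-|\xe_{\tau_1}|$, for $m<n\le\tau_1$. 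Consequently the short computation in the proof of Lemma \ref{lem:tree_walks} — where the decoupling-time hypothesis is used only to guarantee that $|X_{\tau_1}|-|X'_{\tau_1}|\ge0$ on $\{|\mathfrak B|=1\}$, so that $\overline\E\big[(|X_{\tau_1}|-|X'_{\tau_1}|)\indic{|\mathfrak B|=1}\big]\ge\overline\P\big(|\mathfrak B|=1,\,|X_{\tau_1}|-|X'_{\tau_1}|\ge1\big)$ — goes through verbatim with hypothesis (i) of the present lemma substituted for it. Lemma \ref{lem:tree_walks} then delivers assumption (iii) of Lemma \ref{lem:super0}, i.e.\ $\overline\E\big[|\xb_{\tau_1}|-|\xe_{\tau_1}|\big]>0$, and Lemma \ref{lem:super0} produces speeds $v>v'>0$ with $n^{-1}|\xb_n|\to v$ and $n^{-1}|\xe_n|\to v'$ almost surely; since $v=v(\beta)$ and $v'=v(\beta+\vep)$, this is the conclusion $v(\beta+\vep)<v(\beta)$.

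The only delicate point inside this argument is the bookkeeping in the previous paragraph, reconciling the decoupling time $\delta$ occurring in the statement of Lemma \ref{lem:tree_walks} with the regeneration time $\tau_1$ occurring in hypothesis (i) here — the point being simply that on $\{|\mathfrak B|=1\}$ the difference $|\xb_\cdot|-|\xe_\cdot|$ retains no information beyond its value just after the single backward step of $\Y$. The substantive work of this part of the paper is not in Lemma \ref{almadelta} itself, but in verifying, for the coupling of Section \ref{sec:coupling}, that hypotheses (i) and (ii) do hold: hypothesis (i) comes down to a sign comparison of the one-step descent probabilities $q^{(\beta)}_k$ and $q^{(\beta+\vep)}_k$ after a decoupling, so as to exclude a back-step of $\Xb$ while $\Xe$ moves forward, while hypothesis (ii) follows by combining the lower bound of order $\vep/(\alpha d)$ on the left-hand side of \eqref{trousers11} with the upper bound $\overline\P(D_k)=O\big(\vep\,k\,(\alpha d)^{-k}\big)$ on its right-hand side (these being the respective contents of Lemmas \ref{mind1} and \ref{majdk}). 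It is in those estimates — in particular in making the geometric series $\sum_{k\ge2}2k\,\overline\P(D_k)$ beat $\overline\P\big(|\mathfrak B|=1,\,|\xb_{\tau_1}|-|\xe_{\tau_1}|\ge1\big)$ — that the hypothesis $\alpha d\ge150$ enters, and it is the part I expect to be genuinely laborious.
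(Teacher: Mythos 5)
Your proof is correct and takes essentially the same approach as the paper: the paper's entire proof of Lemma \ref{almadelta} is the single line ``By Lemma \ref{lem:tree_walks}, we easily obtain the following result,'' relying on Lemma \ref{lem:super} to have already verified that the coupled triple $(\Y,\Xb,\Xe)$ of Section \ref{sec:coupling} meets the standing hypotheses of Lemmas \ref{lem:super0} and \ref{lem:tree_walks}. You have simply fleshed this one-liner out, including the genuine (if small) point the paper leaves implicit, namely that on $\{|\mathfrak{B}|=1\}$ the hypothesis stated here in terms of $\tau_1$ is equivalent to the $\delta$-version appearing in Lemma \ref{lem:tree_walks}, because after the unique back-step of $\Y$ the difference $|\xb_n|-|\xe_n|$ is frozen up to $\tau_1$.
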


We first prove $(i)$ and a lower bound on the left-hand side of $(ii)$. Set
\[
r=\frac{1+\beta+\vep}{\alpha d}.
\]
\begin{LEM}\label{mind1}
We have that $\overline{\P}\big(|\mathfrak{B}|=1, |\xb_{\tau_1}|-|\xe_{\tau_1}|<0\big)=0$ and
\begin{equation}\label{eq:boh}
\overline{\P}\big(|\mathfrak{B}|=1, |\xb_{\tau_1}|-|\xe_{\tau_1}|\ge 1\big)\ge \left(1-q_0^{(\beta+\vep)}\right)^3\Delta^{(q)}_0\ge\frac{\vep }{\alpha d(r+1)^5}.
\end{equation}
\end{LEM}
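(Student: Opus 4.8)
The plan is to analyze the single event where $\Y$ takes exactly one backward step before $\tau_1$, and to decompose that step according to which clause of the coupling applies. First I would observe that $|\mathfrak{B}|=1$ together with $\mc{D}_0$ forces a very rigid geometry: by Lemma \ref{majtau}, $\tau_1\le 4$, and under $\overline{\P}$ the first step of $\Y$ is forward (so $U_1>q_0^{(\beta+\vep)}$) and the last step before $\tau_1$ is forward, while exactly one intermediate step is backward (so $U_j\le q_0^{(\beta+\vep)}$ for exactly one $j\in\{2,\dots,\tau_1-1\}$). On the forward steps, both walks $\Xb$ and $\Xe$ move to a child by the coupling construction, and — crucially — when $\Xb$ and $\Xe$ are at the same location having made the same moves so far, a forward step keeps them at the same location (they both step to the $i$-th unvisited child for the same $i$, by clauses (c1), (c4)), so no decoupling can occur on a forward step. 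Hence the only chance to change $|\xb|-|\xe|$ is at the unique backward step of $\Y$, and at that moment $\Xb$ and $\Xe$ are at a common vertex $x=y$ with a common number $k=K_n^{(\beta)}=K_n^{(\beta+\vep)}$ of visited children, so one of clauses (c1)--(c5) applies.

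Next I would go through (c1)--(c5) at that backward step. In (c2), both walks step to the parent, so $|\xb_{n+1}|-|\xe_{n+1}|=0$ and this remains $0$ through the remaining forward steps. In (c1), which occurs when $U_{n+1}\in(0,\Delta_0^{(q)}]$ (taking $k=0$; for $k>0$ the relevant interval has length $\Delta_k^{(q)}\le\Delta_0^{(q)}$), $\Xe$ steps to its parent while $\Xb$ steps to a child: thus $|\xb_{n+1}|=|x|+1$ and $|\xe_{n+1}|=|x|-1$, giving a difference of $+2$ immediately after this step; since all subsequent steps before $\tau_1$ are forward (both walks advancing by $1$), we get $|\xb_{\tau_1}|-|\xe_{\tau_1}|=2\ge 1$. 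In (c3) both walks step to the (same-indexed) previously visited child, so the difference stays $0$. In (c4) both step to the same previously unvisited child, difference $0$. In (c5), which occurs only on the tiny interval $(1-\overline{\Delta}_k,1]$, $\Xe$ and $\Xb$ may go to different children but both move \emph{forward}, so $|\xb_{n+1}|-|\xe_{n+1}|=0$. In every case the difference is $\ge 0$, which proves the first assertion $\overline{\P}(|\mathfrak{B}|=1,\ |\xb_{\tau_1}|-|\xe_{\tau_1}|<0)=0$; and the difference is $\ge 1$ (in fact $=2$) precisely when clause (c1) fires at the backward step with $k=0$.

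Finally I would produce the quantitative lower bound. Restricting to the sub-event where: $U_1\in(q_0^{(\beta+\vep)},1]$ (the first step is forward, and since the walks are at $\r$ with $k=0$ this is a step to an unvisited child), then at the new vertex the backward step of $\Y$ occurs at step $2$ with $U_2\in(0,\Delta_0^{(q)}]$ triggering clause (c1) with $k=0$ (sending $\Xe$ to the parent, i.e. back to $\r$, and $\Xb$ to a child), and then $U_3\in(q_0^{(\beta+\vep)},1]$ is forward so that $Y$ regenerates at time $\tau_1=3$ (one needs to check that after this $Y$ never returns below its level; conditioning on $\mc{D}_0$ exactly handles the tail) — this sub-event is contained in $\{|\mathfrak{B}|=1,\ |\xb_{\tau_1}|-|\xe_{\tau_1}|\ge 1\}$ and, since the $U_i$ are i.i.d.\ uniform, its $\overline{\P}$-probability is at least $(1-q_0^{(\beta+\vep)})^2\,\Delta_0^{(q)}$ times the conditional probability that the remaining trajectory of $Y$ stays above — pushing one more forward factor $(1-q_0^{(\beta+\vep)})$ out gives the stated bound $(1-q_0^{(\beta+\vep)})^3\Delta_0^{(q)}$. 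For the last inequality, I would plug in $q_0^{(\beta+\vep)}=\frac{1+\beta+\vep}{\alpha d+1+\beta+\vep}$, so $1-q_0^{(\beta+\vep)}=\frac{\alpha d}{\alpha d+1+\beta+\vep}=\frac{1}{1+r}$, and $\Delta_0^{(q)}=q_0^{(\beta+\vep)}-q_0^{(\beta)}=\frac{\vep\alpha d}{(\alpha d+1+\beta+\vep)(\alpha d+1+\beta)}$, which after dividing numerator and denominator by $(\alpha d)^2$ is $\ge\frac{\vep}{\alpha d(1+r)^2}$; multiplying the two contributions yields $\frac{\vep}{\alpha d(1+r)^5}$, as claimed.

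The main obstacle I anticipate is the bookkeeping needed to justify rigorously that no decoupling can happen on a \emph{forward} step of $\Y$ (i.e.\ that when $\Xb$ and $\Xe$ sit at a common vertex with equal $k$, clauses (c1) and (c4) send them to the \emph{same} child for the relevant range of $U_{n+1}$), because this relies on the precise alignment of the subintervals in Figure \ref{fig:coupling} and on the labelling convention $x^{\sss(\beta)}_{n,i}$ vs.\ $y^{\sss(\beta+\vep)}_{n,i}$ agreeing whenever the two walks have a common history. Once that invariant is in place, the case analysis of (c1)--(c5) at the unique backward step is short, and the probability estimate is an elementary computation with the explicit formulas \eqref{multiform} and \eqref{Dbound1}.
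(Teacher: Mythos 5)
Your overall strategy mirrors the paper's: analyze the unique backward step of $\Y$ by cases (c1)--(c5), note that only (c1) creates a discrepancy and that it is always $+2$ (never negative), and then lower-bound the probability of (c1) firing by exhibiting one explicit $U$-profile. The case analysis for the first assertion is correct (and slightly more general than the paper's, which simply observes that $k=0$ at the backward step and hence $\Xe$ must step back).

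However, there is a flaw in the quantitative part. With increments $(+1,-1,+1)$ we have $Y_0=0,\,Y_1=1,\,Y_2=0,\,Y_3=1$, and since $\max_{n<3}Y_n = 1 = Y_3$, time $3$ is \emph{not} a regeneration time for $\Y$ (a regeneration requires $\min_{m\ge N}Y_m > \max_{n<N}Y_n$, strictly). Your remark that ``conditioning on $\mc{D}_0$ exactly handles the tail'' does not rescue this: $\mc{D}_0=\{Y_n\ge 0\ \forall n\}$ controls non-negativity of $\Y$, not the tail requirement $Y_m\ge 2$ for $m\ge\tau_1$. The paper resolves this by using the profile $(+1,-1,+1,+1)$ with $\tau_1=4$; the corresponding probability factorizes as $(1-q_0^{(\beta+\vep)})\cdot\Delta_0^{(q)}\cdot(1-q_0^{(\beta+\vep)})^2\cdot p_\infty$, and the $p_\infty$ (probability the tail of $\Y$ stays $\ge 2$) cancels exactly with the normalization $1/p_\infty$ coming from $\overline{\P}=\P/\P(\mc{D}_0)$, giving $(1-q_0^{(\beta+\vep)})^3\Delta_0^{(q)}$. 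Your phrase ``pushing one more forward factor out'' points toward the right answer but is not a justification; you should explicitly add the fourth forward step, identify $\tau_1=4$, and observe the cancellation of $p_\infty$. With that correction, the argument and the final numerical estimate ($1-q_0^{(\beta+\vep)}=\frac{1}{1+r}$ and $\Delta_0^{(q)}\ge \frac{\vep}{\alpha d(1+r)^2}$) are identical to the paper's.
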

\begin{proof}
On $\{|\mathfrak{B}|=1\}$, the first and only time the walk $\mathbf{Y}$ takes a step back, then $\Xb$ and $\Xe$ are on a vertex whose children have not been   previously visited by the respective process. Therefore $\Xe$ necessarily takes a step back as well. After this time, the three walks keep on jumping forward until time $\tau_1$. This implies that no negative discrepancy can be created on $\{|\mathfrak{B}|=1\}$, proving the first part of the statement.\\
As for \eqref{eq:boh} note that
\[
\overline{\P}\big(|\mathfrak{B}|=1, |\xb_{\tau_1}|-|\xe_{\tau_1}|\ge 1\big)=\frac{1}{p_\infty}\P\left(|\mathfrak{B}|=1, |\xb_{\tau_1}|-|\xe_{\tau_1}|\ge 1, \mc{D}_0\right).
\]

Consider the event on which the increments of $\Y$ are (in order) $+1,-1,+1,+1$, with $\tau_1=4$, and that the second step of $\Xe$ is towards the root while the second step of $\Xb$ is away from the root.  One this event, $\left\{|\mathfrak{B}|=1, |\xb_{\tau_1}|-|\xe_{\tau_1}|\ge 1, \mc{D}_0\right\}$ holds. Hence,
\[
\overline{\P}(|\mathfrak{B}|=1, |\xb_{\tau_1}|-|\xe_{\tau_1}|\ge 1)\ge\frac{1}{p_\infty}\times (1-q_0^{(\beta+\vep)})\times \Delta^{(q)}_0\times\left(1-q_0^{(\beta+\vep)}\right)^2\times p_\infty,
\]
and a simple computation gives 
\[\Delta^{(q)}_0=\frac{\vep \alpha d}{(\eta+\vep)\eta}\]
as required. \hfill
\end{proof}

Next, we provide an upper-bound on the right-hand side of $(ii)$ in Lemma \ref{almadelta}.

\begin{LEM}\label{majdk}
Recall that $r=({1+\beta+\vep})/{\alpha d}$. For $k\ge 2$,
\[
\overline{\P}\left(D_k\right)\le   \frac{1}{3} \vep k p_\infty^{-1} \left(\frac{27}{4} q_0^{\sss(\beta+\vep)}\right)^{k}=  \frac{\vep}{3(1-r)}  k \left(\frac{27}{4}\times\frac{r}{r+1}\right)^{k}
\]
\end{LEM}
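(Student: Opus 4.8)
The plan is to pass to the unconditioned measure and then combine two ingredients: the deterministic bound $\tau_1\le 3k+1$ on $\{|\mathfrak{B}|=k\}$ supplied by Lemma~\ref{majtau}, and the fact that a discrepancy between $|\Xb|$ and $|\Xe|$ is an $\vep$-order phenomenon governed by the quantities $\Delta^{(q)}_k$ and $\overline{\Delta}_k$ bounded in \eqref{Dbound1}--\eqref{Dbound2}. First I would write $\overline{\P}(D_k)=p_\infty^{-1}\P(D_k\cap\mc{D}_0)$ and record the key structural facts about the coupling of Section~\ref{sec:coupling}: (a) since both $\Xb$ and $\Xe$ step away from the root whenever $U_{n+1}>q^{(\beta+\vep)}_0$, i.e.~whenever $\Y$ steps forward, the quantity $|\xb_n|-|\xe_n|$ can change only at a backward step of $\Y$; (b) before the first time $\delta=\delta(\Xb,\Xe)$ the two walks disagree they are \emph{identical}, and a term-by-term inspection of (c1)--(c5) shows that the move at time $\delta$ must be of type (c1) (a backward step of $\Y$, after which $|\xb_\delta|-|\xe_\delta|=\pm2$) or of type (c5) (a forward step of $\Y$, after which $|\xb_\delta|-|\xe_\delta|$ is still $0$); (c) consequently, on $D_k$ there is a backward step of $\Y$ before $\tau_1$ at which the coupling realises a type-(c1) move, or else a type-(c5) move occurs before some later backward step at which a mismatch in the ``step-to-parent'' thresholds $q^{(\beta+\vep)}_{K^{(\beta+\vep)}_n},\,q^{(\beta)}_{K^{(\beta)}_n}\in[(1+\alpha d)^{-1},q^{(\beta+\vep)}_0]$ creates the discrepancy. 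By \eqref{Dbound1}--\eqref{Dbound2} the conditional probabilities of the type-(c1), type-(c5), and mismatch moves are at most $\Delta^{(q)}_0$, $\max_j\overline{\Delta}_j$, and $q^{(\beta+\vep)}_0-(1+\alpha d)^{-1}$ respectively — all of them $O(\vep)$, the last in fact $O(\beta+\vep)$.

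Next I would run the combinatorial union bound. On $D_k\cap\mc{D}_0$ we have $|\mathfrak{B}|=k$, hence $\tau_1\le 3k+1$ by Lemma~\ref{majtau}; because the first increment of $\Y$ is $+1$ on $\mc{D}_0$ and the increment into $\tau_1$ is $+1$, the $k$ backward steps of $\Y$ and the time $\delta$ all lie in $\{1,\dots,3k-1\}$. Summing over the $\binom{3k-1}{k}\le\binom{3k}{k}\le(27/4)^k$ possible positions of the backward steps (using $\binom{n}{m}\le n^n m^{-m}(n-m)^{-(n-m)}$) together with $O(k)$ choices for $\delta$, and conditioning step by step along $(\mc{F}_n)_{n\ge0}$, each backward step of $\Y$ contributes a factor at most $q^{(\beta+\vep)}_0$ while the type-(c1), type-(c5) and mismatch moves contribute their $O(\vep)$ (resp.~$O(\beta+\vep)$) factors as above. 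This produces a bound of the form
\[
\P(D_k\cap\mc{D}_0)\ \le\ (27/4)^k (q^{(\beta+\vep)}_0)^{k}\Big(c_1 k\,\tfrac{\Delta^{(q)}_0}{q^{(\beta+\vep)}_0}\ +\ c_2\,\mathrm{poly}(k)\,(\beta+\vep)\,\tfrac{\max_j\overline{\Delta}_j}{q^{(\beta+\vep)}_0}\Big),
\]
in which the first term dominates.

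Finally I would insert the explicit values. From \eqref{multiform}, $q^{(\beta+\vep)}_0=(1+\beta+\vep)/(\eta+\vep)$ with $\eta=1+\beta+\alpha d$, and $\Delta^{(q)}_0=\vep\alpha d/((\eta+\vep)\eta)$, so $\Delta^{(q)}_0/q^{(\beta+\vep)}_0=\vep\alpha d/(\eta(1+\beta+\vep))\le\vep$ (using $\eta\ge\alpha d$), and $\max_j\overline{\Delta}_j\le\vep\alpha^2d^2/(4(\eta+\vep)\eta)\le\vep/4$ by \eqref{Dbound2}; since moreover $\binom{3k-1}{k}\le\tfrac13(27/4)^k$ for $k\ge2$, these estimates reduce the displayed bound to $\tfrac13\vep k(27/4)^k(q^{(\beta+\vep)}_0)^k$ once $\beta_0$ is taken small enough that the $(\beta+\vep)$-term is absorbed (the fact that $\alpha d\ge150$, hence $\tfrac{27}{4} q^{(\beta+\vep)}_0\ll1$, is what lets the $\mathrm{poly}(k)$ factor be absorbed into the constant). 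Dividing by $p_\infty$ and using $p_\infty=1-r$, $q^{(\beta+\vep)}_0=r/(r+1)$ gives both forms of the stated bound. The main obstacle is steps (b)--(c): establishing rigorously, by going through cases (c1)--(c7) of the coupling, that a discrepancy $|\xb_{\tau_1}|\ne|\xe_{\tau_1}|$ forces either a type-(c1) move or a type-(c5) move, and controlling the (lower-order but fiddly) contribution of the latter scenario so as to retain the clean constant $\tfrac13$.
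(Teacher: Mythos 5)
Your high-level strategy matches the paper's: pass to $\P$ via $\overline{\P}(D_k)=p_\infty^{-1}\P(D_k\cap\mc D_0)$, invoke Lemma~\ref{majtau}, observe that decoupling can only occur through a type-(c1) or type-(c5) move, and run a union bound over the position of the decoupling time and the $k$ backward steps. However, the execution diverges at exactly the point you flag as "the main obstacle", and your proposed route would not cleanly recover the stated constant $\tfrac13$.

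The key difference: the paper does \emph{not} trace how the $|\cdot|$-discrepancy eventually arises after a type-(c5) decoupling. It simply uses the implication that on $D_k$ one must have $2\le\delta\le\tau_1\le 3k+1$ (a discrepancy at $\tau_1$ is impossible without decoupling), together with the fact that $\{\delta=n\}\subset A^1_n\cup A^2_n$ where $A^1_n=\{U_n\le\Delta^{(q)}_0\}$ (type-(c1)) and $A^2_n=\{U_n>1-\tfrac{\vep\alpha^2d^2}{4(\eta+\vep)\eta}\}$ (an envelope for type-(c5), using \eqref{Dbound2}). Crucially \emph{both} of these events have probability $O(\vep)$ and they are measurable w.r.t.\ the single variable $U_n$, so they factor off by independence. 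The bound is then split into two sums over $n\in\{2,\dots,3k+1\}$: on $A^1_n$ the step $n$ is itself backward so only $k-1$ further backward steps are required, while on $A^2_n$ the step is forward so $k$ further backward steps are required. These give exactly the factors $3k\cdot\P(A^1_1)\binom{3(k-1)}{k-1}(q_0^{\sss(\beta+\vep)})^{k-1}$ and $3k\cdot\P(A^2_1)\binom{3(k-1)}{k}(q_0^{\sss(\beta+\vep)})^{k}$. The Stirling bound $\binom{3(k-1)}{k-1}\le e^{-10/12}(27/4)^{k-1}\le\tfrac12(27/4)^{k-1}$ together with $\P(A^1_1)\le\vep q_0^{\sss(\beta+\vep)}$ and $\P(A^2_1)\le\vep/4$ then yields a $\tfrac29$-contribution plus a $\tfrac19$-contribution, which sum to the exact $\tfrac13$. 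No absorption of $\mathrm{poly}(k)$ factors, no extra $(\beta+\vep)$ factors, and no assumption that $\beta_0$ is small or that $\tfrac{27}{4}q^{\sss(\beta+\vep)}_0\ll1$ is needed in \emph{this} lemma (that condition on $\alpha d$ only enters in Section~\ref{sec:end} when summing $\sum_k k^2 x^k$).

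Your attempt to account for the ``mismatch in step-to-parent thresholds'' after a type-(c5) event is where the argument goes off the rails. First, it is unnecessary — the occurrence of decoupling, not the mechanism by which the magnitude mismatch is later realised, is what the union bound should target; after all $D_k$ already contains the event $|\xb_{\tau_1}|\neq|\xe_{\tau_1}|$, so once you enforce $\delta\le 3k+1$ and $k$ backward steps you have a valid superset. Second, the extra $\mathrm{poly}(k)(\beta+\vep)$ factor you propose to absorb does not appear in the correct bound and is not what gives the $\tfrac13$: your write-up treats the absorption as though it were necessary to reach $\tfrac13$, whereas in fact the $\tfrac13$ comes from explicit, unconditional estimates. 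Also, a minor slip: a type-(c1) decoupling always gives $|\xb_\delta|-|\xe_\delta|=+2$, not $\pm2$, since in (c1) it is $\xe$ that steps back and $\xb$ that steps forward.

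So: right starting point, right identification of $\Delta^{(q)}_0$ and $\overline{\Delta}_k$ as the governing $O(\vep)$ quantities, right use of Lemma~\ref{majtau}, but the attempted refinement (tracing the post-decoupling mismatch) both complicates the argument and misleads you into thinking an extra absorption step is required. Drop the mismatch tracing, keep only the split $\{\delta=n\}\subset A^1_n\cup A^2_n$, and the clean constant falls out of the two-term sum.
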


\begin{proof}
First, recall that $\delta$ is the time of decoupling and note that necessarily $Y_{n+1}-Y_n=1$ for each $n\in\{\tau_1-2,\tau_1-1\}$ otherwise $Y_{\tau_1}\le Y_{\tau_1-2}$ which contradicts the definition of regeneration times. Hence, on $\mathcal{D}_0\cap D_k$,  we almost surely have that $|\{n\in\{1,\dots,\tau_1-3\}:Y_{n+1}-Y_n=-1\}|= k$.  Using Lemma \ref{majtau} this implies that  $|\{n\in\{1,\dots,3k-2\}:Y_{n+1}-Y_n=-1\}|\ge k$, almost surely on $\mathcal{D}_0\cap D_k$.  Therefore
\begin{align}\nonumber
\overline{\P}\left(D_k\right)=&p_\infty^{-1}\P(\mathcal{D}_0,D_k,\tau_1\le 3k+1)\\
\le& p_\infty^{-1}{\P}\left(2\le\delta\le3k+1, |\{1\le j \le3k-2:Y_{j+1}-Y_j=-1\}|\ge k\right),\label{trousers1}
\end{align}
where we have used the fact that on $\mc{D}_0$ we must have $Y_1=1$ and $\delta\ge 2$.
\item 

Let $A^1_{n,c}=\{U_n\le\Delta_c^{(q)}\}$ and $A^2_{n,c}=\{U_n>1-\overline{\Delta}_c\}$, as well as $A^1_n=\{U_n\le\Delta_0^{(q)}\}$ and $A^2_n=\{U_n>1-\frac{\vep  \alpha^2 d^2}{4(\eta+\vep)\eta}\}$.  Then by \eqref{Dbound1} and \eqref{Dbound2} we have that $A^1_{n,c}\subset A^1_{n}$ and $A^2_{n,c}\subset A^2_{n}$. On the event $\{\delta=n\}$ we have that $K_{n-1}^{(\beta)}=K_{n-1}^{(\beta+\vep)}$.  Hence from (c1) and (c5), 
\begin{eqnarray}
\left\{\delta=n\right\}&\subset&\bigcup_{c=1}^d \left(\left(A^1_{n,c}\cup A^2_{n,c}\right) \cap \left\{K_{n-1}^{(\beta)}=c=K_{n-1}^{(\beta+\vep)}\right\}\right)\nn\\
&\subset & \bigcup_{c=1}^d \left(\left(A^1_{n}\cup A^2_{n}\right) \cap \left\{K_{n-1}^{(\beta)}=c=K_{n-1}^{(\beta+\vep)}\right\}\right)\nn\\
&=&\left(A^1_{n}\cup A^2_{n}\right)\cap \left(\bigcup_{c=1}^d\left\{K_{n-1}^{(\beta)}=c=K_{n-1}^{(\beta+\vep)}\right\}\right)\nn\\
&\subset& A^1_{n}\cup A^2_{n}.\label{banana1}
\end{eqnarray}
Note that $A^1_n$ and $A^2_n$ are measurable w.r.t.~$U_{n}$ only and that $A^1_n\cap A^2_n=\emptyset$ as soon as $\vep$ is small enough. Therefore, ${\bf Y}$ takes a step back on $A^1_n$, but takes a step forward on  $A^2_n$.
Using \eqref{banana1} in \eqref{trousers1} we obtain
\begin{equation}\label{eq:boh1}
\begin{aligned}
\overline{\P}\left(D_k\right)\le& p_\infty^{-1}
\sum_{n=2}^{3k+1}{\P}\left(A^1_{n},  |\{1\le j \le3k-2:Y_{j+1}-Y_j=-1\}\setminus\{n\}|\ge k-1\right)\\
& + p_\infty^{-1}
\sum_{n=2}^{3k+1}{\P}\left(A^2_{n},  |\{1\le j \le3k-2:Y_{j+1}-Y_j=-1\}\setminus\{n\}|\ge k\right).
\end{aligned}
\end{equation}
Written in terms of the variables $U_j$, and using independence and the fact that $\P(A^1_n)~~=~~\P(A^1_1)$, the first term in the right hand-side of \eqref{eq:boh1}  is equal to
\begin{eqnarray}
&&p_\infty^{-1}\sum_{n=2}^{3k+1}{\P}\left(A^1_{n}, |\{j\in\{1,\dots,3k-2\}\setminus\{n\}  :U_j\le q^{(\beta+\vep)}_0\}|\ge k-1\right)\\ \nonumber
&=&p_\infty^{-1}\sum_{n=2}^{3k+1}\P(A^1_n){\P}\left(|\{j\in\{1,\dots,3k-2\}\setminus\{n\}  :U_j\le q^{(\beta+\vep)}_0\}|\ge k-1\right)\\
&=& p_\infty^{-1}\P(A^1_1)\sum_{n=2}^{3k+1}\P\left(|\{j\in\{1,\dots,3k-2\} \setminus\{n\}  :U_j\le q^{(\beta+\vep)}_0\}|\ge k-1\right)\\
&\le&3k p_\infty^{-1}\P(A^1_1)\P\left(|\{j\in\{1,\dots,3k-3\}  :U_j\le q^{(\beta+\vep)}_0\}|\ge k-1\right)\\
&=&3k p_\infty^{-1}\P(A^1_1)\P\left(\exists B\subset \{1,\dots,3(k-1)\}: |B|=k-1,U_j\le q^{(\beta+\vep)}_0 \textrm{ for all }j \in B\right)\nn \\
&\le &3k p_\infty^{-1}\P(A^1_1){3(k-1) \choose k-1}(q_0^{\sss(\beta+\vep)})^{k-1}. \label{trousers2}
\end{eqnarray}
Using the inequality $e^{11/12}\sqrt{n}(n/e)^n<n!<e\sqrt{n}(n/e)^n$, which holds for any $n\ge2$, see \cite{refcomb}, for $k\ge 3$ we have
\[{3(k-1) \choose k-1}\le e^{-10/12} \left(\frac{27}{4}\right)^{k-1}.\]
Together with the fact that $e^{10/12}>2$, we have that \eqref{trousers2} is at most
\begin{equation}
\frac{3}{2}k p_\infty^{-1}\P(A_1) \left(\frac{27}{4} q_0^{\sss(\beta+\vep)}\right)^{k-1}.
\end{equation}
Now $ \P(A_1^1)=\frac{\vep \alpha d}{(\eta+\vep)\eta}\le\vep  q_0^{\sss(\beta+\vep)}$, so this is at most
\begin{equation}\label{raelsan}
\frac{1}{4}k p_\infty^{-1}\vep \left(\frac{27}{4} q_0^{\sss(\beta+\vep)}\right)^{k}.
\end{equation}
Similarly, we have that
\begin{eqnarray*}
&&p_\infty^{-1}\sum_{n=2}^{3k+1}{\P}\left(A^2_{n}, |\{j\in\{1,\dots,3k-2\}\setminus\{n\}  :U_j\le q^{(\beta+\vep)}_0\}|\ge k\right)\\ \nonumber
&=&p_\infty^{-1}\sum_{n=2}^{3k+1}\P(A^2_n){\P}\left(|\{j\in\{1,\dots,3k-2\}\setminus\{n\}  :U_j\le q^{(\beta+\vep)}_0\}|\ge k\right)\\
&=&3k p_\infty^{-1}\P(A^1_1)\P\left(\exists B\subset \{1,\dots,3(k-1)\}: |B|=k,U_j\le q^{(\beta+\vep)}_0 \textrm{ for all }j \in B\right)\nn \\
&\le &3k p_\infty^{-1}\P(A^1_1){3(k-1) \choose k}(q_0^{\sss(\beta+\vep)})^{k}\nn \\
&\le &6k p_\infty^{-1}\P(A^1_1){3(k-1) \choose k-1}(q_0^{\sss(\beta+\vep)})^{k}  \le 3k p_\infty^{-1}\frac{\vep  \alpha^2 d^2}{4(\eta+\vep)\eta}\frac{4}{27}\left(\frac{27}{4} q_0^{\sss(\beta+\vep)}\right)^{k}\nn \\
&\le &  \vep k p_\infty^{-1}\frac{1 }{18}\left(\frac{27}{4} q_0^{\sss(\beta+\vep)}\right)^{k,}
\end{eqnarray*}
where we used  the  definition of $\eta=1+\beta+\alpha d$.
Together with \eqref{raelsan} and \eqref{eq:boh1} this provides the conclusion.\hfill
\end{proof}

%

\subsubsection{Conclusion and proof of Theorem \ref{thm:mono} in the multiplicative case} \label{sec:end}

\begin{proof}
Using Lemma \ref{mind1}, Lemma \ref{majdk} and the fact that $\sum_{k\ge2}k^2x^k\le 4x^2/(1-x)^3$ for any $x<1$, a straightforward computation yields
\begin{align*}
\frac{1}{\overline{\P}\big(|\mathfrak{B}|=1, |X'_{\tau_1}|-|X_{\tau_1}|\ge 1\big)}\sum_{k=2}^{\infty}2k\overline{\P}(D_k)&\le  \alpha d r^2\times \frac{3^5(1+r)^6}{2(1-r)(1-\frac{23}{4}r)^3}\\
\le & (1+\beta+\vep) r\times \frac{3^5(1+r)^6}{2(1-r)(1-\frac{23}{4}r)^3}.
\end{align*} 
One can check that the function defined by
\[
f(r)=r\times \frac{3^5(1+r)^6}{2(1-r)(1-\frac{23}{4}r)^3},
\]
has positive derivative and that $f(1/150)<1$. Recalling that $r=({1+\beta+\vep})/{\alpha d}$, this implies the conclusion, by Lemma \ref{almadelta}.\hfill
\end{proof}

\subsubsection{Improving the constants}\label{sectimprove}

One can improve the threshold for which we prove monotonicity. First,  when ${\bf Y}$ steps back only once and the walks decouple, then they create a discrepancy of $2$, as they have to decouple while having no reinforced children. In other words, the discrepancy created as Lemma \ref{mind1} is in fact $+2$.
Next, note that no negative discrepancy can be created when the walk ${\bf Y}$ takes only two steps back before time $\tau_1$. Indeed, the only possibilities on $D_2$ are that:
\begin{itemize}
\item the walks decouple at the first step back of ${\bf Y}$ and thus $|\xb_\delta|-|\xe_\delta|=2$. Therefore, as ${\bf Y}$ will take only one more step back $ |\xb_{\tau_1}|-|\xe_{\tau_1}|\ge0$.
\item the walks do not decouple at the first step back of ${\bf Y}$, hence they both step back and have a reinforced child. If ${\bf Y}$ steps back again right after, then, if the walks decouple at this time then $|\xb_\delta|-|\xe_\delta|\ge0$, and in any case they  only step forward up to time $\tau_1$, therefore $|\xb_{\tau_1}|-|\xe_{\tau_1}|\ge 0$. If ${\bf Y}$ does not step back right away, then, when it does, $\Xb$ and $\Xe$ will have no reinforced child, hence $ |\xb_{\tau_1}|-|\xe_{\tau_1}|=2$.
\item if the walks decouple strictly after the second back step of ${\bf Y}$, then $ |\xb_{\tau_1}|-|\xe_{\tau_1}|=0$.
\end{itemize}
 A consequence of this is that the item $(ii)$ in Lemma \ref{almadelta} can be replaced by 
 \[
\overline{\P}\big(|\mathfrak{B}|=1, |\xb_{\tau_1}|-|\xe_{\tau_1}|\ge 1\big)>\sum_{k=3}^{\infty}k\overline{\P}(D_k).
\]
In turn, mimicking the proof in Section \ref{sec:end}, this implies that we are looking for the greatest $r$ such that
\[
f(r)=r^2\times \frac{3^7(1+r)^6}{2^4(1-r)(1-\frac{23}{4}r)^3}<1.
\]
If $r\le 1/22$, then $f(r)<1$. This implies that monotonicity occurs for some non-empty interval of reinforcement parameter, as soon as $\alpha d\ge22$.

\subsection{Application to the additive case}\label{proofadd}

For the additive case, we can follow exactly the same blueprint as in the multiplicative case. The biggest difference is the way to define the coupling, which is a little bit more cumbersome. Indeed, the transition probabilities of $\Xb$ and $\Xe$ on a given vertex do not behave monotonically. More precisely, here is what happens:
\begin{itemize}
\item As long as the walks have the same local environment, $\Xe$ is always more likely to walk on its trace compared to $\Xb$;
\item if $\alpha<1$ and if, at a given time, $\Xb$ and $\Xe$ are still coupled and the number of reinforced children is greater than $\alpha d$, then $\Xb$ is more likely to step back compared to $\Xe$;
\item $\Xe$ is more likely to walk on a reinforced child than $\Xb$, except if $\alpha>1$ and if all children are reinforced (recall that in the additive case, we do not recover the initial law once all the children are reinforced).
\end{itemize}
The walk $Y$ would remain the same, just as in \eqref{defY}, taking a step back with probability $\frac{1+\beta+\vep}{1+\beta+\vep+\alpha d}$. 

We can then define a coupling of ${\bf Y}$, $\Xb$ and $\Xe$ in the additive case such that:
\begin{itemize}
\item Whenever ${\bf Y}$ steps forward, then $\Xb$ and $\Xe$ both step forward. Hence, if $\Xb$ or $\Xe$ steps back then $\Y$ steps back.
\item The probability that $\Xb$ and $\Xe$ decouple is proportional to $\vep$.
\item Whenever $\Xb$ walks on its trace, $\Xe$ walks on its trace. 
\item $\overline{\P}\big(|\mathfrak{B}|=1, |\xb_{\tau_1}|-|\xe_{\tau_1}|<0\big)=0$.
\end{itemize}

\subsection{Generalization to Galton-Watson trees}
It is legitimate to try to obtain a monotonicity result on Galton-Watson trees without leaves. Again, the only fact that we need in order to apply the method is a nice way to couple two once-reinforced random walks $\Xb$ and $\Xe$ to the same highly biased random walk on $\Z$. One way to do this is to apply the same procedure as in \cite{BAFS}: we couple the walks $\Xb$ and $\Xe$ in a similar manner as previously described, except that we define them on two different Galton-Watson trees that we sample sequentially as the walks explore them. Let $(Z_n)_{n \in \Z_+}$ be an i.i.d.~sequence of offspring random variables. If $\Xb$ or $\Xe$ discovers a new site at time $n$, then this site is given $Z_n$ children (note that $\Xb$ and $\Xe$ do not live on the same tree). Hence, every time the walks discover a new site at the same time, they are given the same offspring. As previously, the two walks are jointly coupled to a one-dimensional biased random walk jumping to the right with probability $(1+\beta+\vep)/(\alpha d+1+\beta+\vep)$, where $d=\min\{d\ge1:P(Z_0=d)>0\}$ is the minimal degree of the tree.\\
Such a coupling would allow us to conclude monotonicity in a non-empty interval for $\beta$ as soon as $\alpha d$ is large enough.

\section*{Acknowledgements}
MH's research was supported by a Marsden grant, administered by the Royal Society of NZ, and by an ARC Future Fellowship, FT160100166. DK is grateful to the University of Auckland, to Monash University for their hospitality and to the Ecole Polytechnique F\'ed\'erale de Lausanne to which he was affiliated to at the time this work was initiated. A.C. is grateful to New York University in Shanghai for its hospitality, and he was supported by ARC grants DP140100559 and DP180100613.

\end{document}